\theoremstyle{plain}
\newtheorem{theorem}{Theorem}[section]
\newtheorem{question}[theorem]{Question}
\newtheorem{lemma}[theorem]{Lemma}
\newtheorem{corollary}[theorem]{Corollary}
\newtheorem{proposition}[theorem]{Proposition}
\theoremstyle{remark}
\newtheorem{remark}[theorem]{Remark}
\newtheorem{claim}[theorem]{Claim}
\def\Z{\mathbb Z}
\def\C{\mathbb{C}}
\def\N{\mathbb{N}}
\newcommand{\eps}{\varepsilon}
\DeclareMathOperator{\PSL}{PSL}
\DeclareMathOperator{\diag}{diag}
\author[V. Alekseev]{Vadim Alekseev}
\address{Vadim Alekseev, TU Dresden, Germany}
\email{vadim.alekseev@tu-dresden.de}
\author[A. Carderi]{Alessandro Carderi}
\address{Alessandro Carderi}
\email{alessandro.carderi@gmail.com}
\author[A. Thom]{Andreas Thom}
\address{Andreas Thom, TU Dresden, Germany}
\email{andreas.thom@tu-dresden.de}
\author[R. Tucker-Drob]{Robin Tucker-Drob}
\address{Robin Tucker-Drob, University of Florida, USA}
\email{r.tuckerdrob@ufl.edu}
\title[Discrete subgroups of full groups]{About discrete subgroups of full groups of measure preserving equivalence relations}
\begin{document}

\onehalfspace

\begin{abstract} In this note we study countable subgroups of the full group of a measure preserving equivalence relation. We provide various constraints on the group structure, the nature of the action, and on the measure of fixed point sets, that imply that the subgroup topology is not discrete. We mention various conjectures about discrete subgroups of full groups.
\end{abstract}

\maketitle

\tableofcontents

\section{Introduction}

In this note we continue a study that was started in \cite{andreasvadim}. It is our attempt to understand the structure of discrete subgroups of full groups of equivalence relations induced by a probability measure preserving action of a countable group.

Let $\Gamma$ be a countable (discrete) group and let $\Gamma \curvearrowright (X,\mu)$ be an ergodic measure preserving action. Let $G:=[\Gamma \curvearrowright (X,\mu)]$ be the full group of the associated measurable equivalence relation. For any $g \in G,$ we denote by $S(g)$ its support, i.e., $S(g):=\{x \in X \mid g.x \neq x\}.$ The group $G$ is endowed with a natural bi-invariant metric associated with the conjugation invariant length function $\ell(g):= \mu(S(g))$, i.e., $d(g,h):= \ell(gh^{-1})$. 
For any subgroup $\Lambda \leq G$, we set
$$\delta(\Lambda) := \inf \{ \mu (S(g)) \mid g\in \Lambda \setminus \{ 1_\Lambda \}  \}$$
and call it the {\it modulus of discreteness} of $\Lambda.$ We say that $\Lambda$ is $\delta$-discrete if $\delta(\Lambda) \geq \delta.$ Note that $\delta(\Lambda)=1$ if and only if the action of $\Lambda$ is essentially free. Moreover, $\delta(\Lambda)>0$ if and only if $\Lambda$ is a discrete subgroup of $G$, i.e., the subgroup topology is discrete.

We say that $\Gamma \curvearrowright (X,\mu)$ is {\it maximal among discrete actions with the same orbits} or just {\it maximal discrete} if there is no non-trivial discrete subgroup $\Lambda$ of $G$ that contains $\Gamma$ properly.
It is a natural question to wonder if $G$ contains maximal discrete subgroups at all or if a given discrete subgroup is itself maximal discrete or at least contained in a maximal discrete subgroup.

In order to state our main results, we have to say a few words about MIF groups. A group $\Gamma$ is said to be {\it mixed identity free} (or MIF for short) if there exists no non-trivial word $w \in \Gamma \ast \mathbb Z$, such that the associated word map $w \colon  \Gamma \to \Gamma$, that is given by evaluation of the variable, satisfies $w(g)=1_\Gamma$ for all $g \in \Gamma.$
There has been a lot of study on MIF groups in recent years, see for example \cite{bst, MR3556970,MR4193626} and the references therein. Natural examples of MIF groups include free groups and ${\rm PSL}_n(\mathbb Z)$. More generally, by results of Hull and Osin, acylindrically hyperbolic group with trivial finite radical \cite{MR3556970} as well as Zariski-dense subgroups of particular simple Lie groups, see for example \cite{MR728928}, are MIF.

We have three main results covering various natural problems in that general area:

\begin{theorem}\label{main1}
\label{sequence} Let $\Gamma$ be group and $\Gamma \curvearrowright (X,\mu)$ be a faithful p.m.p.\ action on a standard probability space, such that at least one of the following conditions is satisfied:
\begin{enumerate}
    \item The group $\Gamma$ is {\rm MIF}, the action is weakly mixing, and essentially free.
    \item The group $\Gamma$ is torsion-free and the action is mixing.
\end{enumerate}
If $g \in G$ is such that $\mu(S(g))<1/3$, then the group $\langle \Gamma,g \rangle$ is not discrete. In particular, $\Gamma$ is contained in a maximal discrete subgroup.
\end{theorem}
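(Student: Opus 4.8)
The plan is to argue by contradiction: assuming $\Lambda:=\langle\Gamma,g\rangle$ is discrete means $\delta(\Lambda)>0$, so I instead aim to manufacture inside $\Lambda$ nontrivial elements of arbitrarily small support, which forces $\delta(\Lambda)=0$. The engine is a \emph{shrinking operation} that, from any $h\in\Lambda\setminus\{1\}$ with $0<\mu(S(h))<1/3$, produces $h'\in\Lambda\setminus\{1\}$ whose support is strictly (and quadratically) smaller; iterating this operation drives the support to $0$. The operation is the commutator of $h$ with one of its $\Gamma$-conjugates: set $h_\gamma:=\gamma h\gamma^{-1}$ and $h':=[h,h_\gamma]$ for a well-chosen $\gamma\in\Gamma$.

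For the support bound I use only the two elementary product-support estimates $S([a,b])\subseteq S(a)\cup bS(a)$ and $S([a,b])\subseteq aS(b)\cup S(b)$. Writing $B:=S(h)$, $A:=\gamma B=S(h_\gamma)$ and $R:=A\cap B$, and using that $h$ is supported in $B$ while $h_\gamma$ is supported in $A$, these two bounds simplify to $B\cup h_\gamma R$ and $A\cup hR$, and their intersection collapses to the clean deterministic inclusion
\[
S([h,h_\gamma])\ \subseteq\ R\cup hR\cup h_\gamma R,
\]
three translates of $R$; hence $\mu(S(h'))\le 3\mu(A\cap B)=3\mu(\gamma S(h)\cap S(h))$. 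Now weak mixing (condition~(1)), respectively mixing (condition~(2)), lets me choose $\gamma$ with $\mu(\gamma B\cap B)$ as close as I like to $\mu(B)^2$, so that $\mu(S(h'))\le 3\mu(S(h))^2+\eps$. The factor $3$ is exactly what makes $3\mu(S(h))^2<\mu(S(h))$ equivalent to $\mu(S(h))<1/3$, which is where the hypothesis on the measure enters; and since the bound is quadratic, the iterates tend to $0$.

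The crux, and the place where the group-theoretic hypotheses are indispensable, is \emph{non-triviality}: I must choose $\gamma$ so that in addition $h'=[h,h_\gamma]\neq1$, i.e.\ $h$ does \emph{not} commute with $h_\gamma$. Here I argue that $\gamma$ cannot be forced into the commuting set. If $h$ commuted with $\gamma h\gamma^{-1}$ along a whole weak-mixing sequence $\gamma_n\to\infty$ realizing $\mu(\gamma_n B\cap B)\to\mu(B)^2$, then on the positive-measure near-independent overlaps both partial transformations are active and commute; translating the relation $h h_{\gamma_n}=h_{\gamma_n}h$ through the $\Gamma$-valued cocycle of $h$ (available since the action is essentially free) turns it into a word relation among the essentially independent $\Gamma$-valued samples of that cocycle and the constant $\gamma_n$. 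By weak mixing these samples are generic, so the relation persists as a nontrivial mixed identity for $\Gamma$ in a free variable, contradicting that $\Gamma$ is {\rm MIF} unless the cocycle is trivial, i.e.\ $h=1$. Under condition~(2) the same degenerate commuting configuration is excluded directly: mixing forces the relevant fixed-point sets to be null and torsion-freeness rules out the finite-order coincidences that commuting on independent overlaps would require. I expect this non-triviality step to be the main obstacle; the support estimate above is completely soft, whereas here one must genuinely exploit MIF / torsion-freeness to prevent the construction from collapsing to the identity.

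Finally, for the ``in particular'' I run Zorn's lemma on the poset of discrete subgroups of $G$ containing $\Gamma$. First, $\Gamma$ itself is discrete: under either hypothesis every nontrivial $\gamma\in\Gamma$ has $\mu(S(\gamma))=1$ (faithfulness together with (weak) mixing annihilates fixed-point sets, using torsion-freeness in case~(2)), so $\delta(\Gamma)=1$ and the poset is nonempty. The key uniformity is that \emph{any} discrete $\Lambda\supseteq\Gamma$ satisfies $\delta(\Lambda)\ge 1/3$: if some $h\in\Lambda$ had $\mu(S(h))<1/3$, then the main statement would make $\langle\Gamma,h\rangle\le\Lambda$ non-discrete, contradicting that subgroups of a discrete group are discrete (since $\delta$ only increases on passing to subgroups). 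Consequently the union of any chain of such subgroups again has all nontrivial elements of support $\ge 1/3$, hence is discrete and bounds the chain; Zorn's lemma then yields a maximal discrete subgroup containing $\Gamma$.
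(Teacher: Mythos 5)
Your architecture coincides with the paper's own: the three-translate inclusion $S([h,h_\gamma])\subseteq R\cup hR\cup h_\gamma R$ is the paper's Proposition \ref{prop:subset}, the near-independent conjugate comes from equidistribution (weak mixing) resp.\ mixing, the factor $3$ gives quadratic contraction exactly below $1/3$, and your Zorn argument for the ``in particular'' (any discrete overgroup of $\Gamma$ is $1/3$-discrete, hence chains have discrete unions) is correct and is precisely how that claim follows. The genuine gap is in the non-triviality step, which you yourself flag as the crux, and the gesture you offer does not work as stated. In case (1), the deduction ``by weak mixing these samples are generic, so the relation persists as a nontrivial mixed identity'' is not valid: a mixed identity must vanish at \emph{every} element of $\Gamma$, and commutation relations along an infinite (or ``generic'') sequence of conjugators $\gamma_n$ produce no such thing. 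The paper bridges this with two ingredients absent from your sketch. First, the set $T$ of good conjugators is \emph{syndetic} (Theorem \ref{equid}), so finitely many translates $Ts^{-1}$, $s\in F$, cover $\Gamma$; if the commutator were trivial on a positive-measure piece for all $h\in T$, then (using essential freeness) every $q\in\Gamma$ would satisfy one of \emph{finitely many} fixed nontrivial words, and Proposition \ref{prop:mif} upgrades this to a single mixed identity, contradicting MIF. Second, to have only finitely many candidate words at all, the cocycle of the element being shrunk must take finitely many values, i.e.\ the element must have a \emph{finite decomposition} over $\Gamma$; a general $g\in G$ with $\mu(S(g))<1/3$ has only a countable decomposition, which yields infinitely many words and no contradiction to MIF. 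The paper therefore first runs the iteration for finite-decomposition elements, with bounds uniform in the complexity of the decomposition, and then approximates an arbitrary $g$; your cocycle-translation argument addresses neither the syndeticity/covering mechanism nor the finite-decomposition issue, so the MIF contradiction cannot be executed.

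In case (2), ``mixing forces the relevant fixed-point sets to be null and torsion-freeness rules out the finite-order coincidences'' is not an argument: nothing in it explains why some conjugate must fail to commute with $h$. What the paper actually proves (Lemma \ref{lem: if T commutes then support}) is the following: choose $\lambda\in\Gamma$ agreeing with $h$ on a positive-measure set (possible since $h$ lies in the full group), and conjugate by powers $\lambda^{nk}$. If $h$ commuted with all of these conjugates, then each set $\lambda^{nk}S(h)$ would be $h$-invariant, and the $\langle\lambda^k\rangle$-factor generated by these sets --- on which $h$ acts trivially --- is forced to be trivial by ergodicity of $\langle\lambda^k\rangle$ (which holds because $\lambda$ has infinite order, by torsion-freeness, and the restricted $\mathbb Z$-action is mixing, hence its finite-index subgroups act ergodically); this makes $S(h)$ have full measure, contradicting $\mu(S(h))<1/3$. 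Two features matter here and are missing from your sketch: the conjugators must be powers of the \emph{specific} $\lambda$ on which $h$ agrees with a group element, not arbitrary members of a mixing sequence, and the contradiction is reached through a factor/ergodicity argument, not through nullity of fixed-point sets or torsion considerations. Until these two steps (the MIF covering mechanism with finite decompositions in case (1), and the commuting-conjugates lemma in case (2)) are supplied, the construction could collapse to the identity and the proof is incomplete.
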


By a result of Jacobson \cite{MR4193626}, there exists an elementary amenable MIF group, so that the above theorem implies that the full group of the hyperfinite equivalence relation admits maximal discrete subgroups, see Remark \ref{jacobs}.

The second result is concerned discreteness of an arbitrary p.m.p.\ ergodic  action of a {\rm MIF} group. 

\begin{theorem}\label{main2}
Let $\Gamma$ be a {\rm MIF} group and let $\Gamma \curvearrowright (X,\mu )$ be a faithful and ergodic p.m.p.\ action.  If $\Gamma$ is discrete, then it is $1/2$-discrete.
\end{theorem}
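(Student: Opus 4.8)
The plan is to argue by contraposition: assuming that some $g \in \Gamma \setminus \{1\}$ satisfies $\mu(S(g)) < 1/2$, I will manufacture non-trivial elements of $\Gamma$ with arbitrarily small support, so that $\delta(\Gamma) = 0$ and $\Gamma$ fails to be discrete. The elements will be the \emph{conjugation commutators} $[g, \gamma g \gamma^{-1}]$ with $\gamma \in \Gamma$, which automatically lie in $\Gamma$. The whole point is to choose $\gamma$ so that this commutator is simultaneously non-trivial and supported on a very small set.

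Two estimates drive the construction. The first is a purely combinatorial support bound: writing $C := S(g) \cap S(h)$, one checks that $[g,h]$ fixes any $x$ for which neither $x$ nor the relevant $g$- and $h$-translates meet $C$, giving the containment
\[
S([g,h]) \subseteq C \cup g(C) \cup h(C),
\]
so that $\mu(S([g,h])) \le 3\,\mu(S(g)\cap S(h))$, and $[g,h]=1$ as soon as the supports are disjoint. Applied to $h=\gamma g\gamma^{-1}$, this controls the commutator entirely by the overlap $\mu(S(g)\cap \gamma S(g))$. The second input is dynamical: since the action is ergodic, the mean-zero function $\mathbf{1}_{S(g)} - \mu(S(g))$ has no non-zero invariant component, whence $\inf_{\gamma \in \Gamma}\mu(\gamma S(g)\cap S(g)) \le \mu(S(g))^2$; thus conjugators with overlap as small as $\mu(S(g))^2$ are available.

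Non-triviality is exactly where the MIF hypothesis enters: the word $w(t) = [g, t g t^{-1}] \in \Gamma \ast \mathbb{Z}$ is reduced and non-trivial whenever $g \neq 1$, so MIF produces at least one $\gamma$ with $[g, \gamma g \gamma^{-1}] \neq 1$. Combining the three ingredients should yield a self-improving inequality: starting from a non-trivial element of support close to $\delta := \delta(\Gamma)$ one extracts a non-trivial commutator of support at most $c\,\delta^2$, forcing $\delta \le c\,\delta^2$ and hence $\delta \ge 1/c$; iterating the reduction then drives the supports to $0$ below the critical value, contradicting discreteness. Here faithfulness is what keeps the produced commutators honest elements of $\Gamma$ subject to the same modulus $\delta$.

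The main obstacle — and the reason the argument is delicate — is the \emph{coordination} of the two selection principles: MIF guarantees a non-trivial commutator for \emph{some} conjugator, while ergodicity guarantees small overlap for \emph{some} (a priori different) conjugator, and one must secure both at once. I expect this to be settled by showing that the conjugators giving a trivial commutator, namely $\{\gamma : \gamma g\gamma^{-1} \in C_\Gamma(g)\}$, cannot exhaust the small-overlap conjugators without contradicting MIF (or the faithfulness of the action). The remaining subtlety is quantitative: the naive factor $3$ above only yields $\delta \ge 1/3$, and reaching the sharp constant $1/2$ will require either the refined form of the support estimate (absorbing one of the three translates $C, g(C), h(C)$ into the others by an optimal choice of $\gamma$) or a second reduction halving the support; this bookkeeping, rather than any single conceptual step, is where the real work lies.
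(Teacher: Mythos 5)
Your three ingredients (the containment $S([g,h])\subseteq C\cup gC\cup hC$, small overlap from ergodicity, and MIF for non-triviality) are the right ones, and assembled as you describe they prove $\delta(\Gamma)\geq 1/3$ --- which is exactly the ``rough idea'' the paper itself sketches in its introduction. The genuine gap is that the theorem asserts $1/2$, and your proposal defers this improvement to ``bookkeeping''. It is not bookkeeping: in the paper, the passage from $1/3$ to $1/2$ is the bulk of the proof, and neither of your two suggested mechanisms works as stated. What is actually used is the refined inequality of Proposition \ref{prop:subset}, namely $\mu(S([g,h]))\leq 3\mu(A)-\mu(gA\cap A)-\mu(hA\cap A)$ with $A=S(g)\cap S(h)$, together with a way to make \emph{both} subtracted terms of size roughly $\mu(A)^2/t$, where $t=\delta(\Gamma)$. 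No ``optimal choice of $\gamma$'' can achieve this: for a fixed $g$ and $h=\gamma g\gamma^{-1}$, nothing forces $gA\cap A$ to be large. Instead one must pass to powers $g^i$, $h^j$ and apply quantitative Poincar\'e recurrence (Lemma \ref{lem:recurrence}) to the actions $\langle g\rangle\curvearrowright (S(g),\mu_{S(g)})$ and $\langle h\rangle\curvearrowright (S(h),\mu_{S(h)})$ on their invariant supports. Then $t\leq \mu(S([g^i,h^j]))\leq 3\mu(A)-2\mu(A)^2/t+o(1)$, which rearranges to $(t-\mu(A))(t-2\mu(A))\leq o(1)$; since $\mu(A)\approx t^2<t$, this forces $t\leq 2\mu(A)+o(1)\approx 2t^2$, i.e.\ $t\geq 1/2$. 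For any of this to make sense one needs two further things your proposal has no trace of: an element of order larger than the recurrence time $N$ whose support is still within $\epsilon$ of the infimum $t$ (this is Lemma \ref{lem:MIFrecurrence}, itself a delicate iterated-commutator MIF construction resting on the sequence analysis of Proposition \ref{prop:amdelta}), and a single conjugator $k$ such that \emph{no} pair $g^i$, $kg^jk^{-1}$ with $1\leq i,j<N$ commutes (Claim \ref{claim:MIFnoncommute}; the relevant mixed identities are non-trivial words only because $g$ has order $>N$). These are the missing ideas, and they are the content of the theorem.

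There is also a gap in the step you flag as the ``main obstacle''. Your ergodicity input, $\inf_\gamma\mu(\gamma S(g)\cap S(g))\leq\mu(S(g))^2$, produces only \emph{one} good conjugator, and with that the coordination problem genuinely cannot be closed: MIF only guarantees that the trivial-commutator conjugators do not exhaust all of $\Gamma$, so nothing prevents them from containing your (possibly single) small-overlap conjugator. What makes the argument work in the paper is that the set $D=\{k\in\Gamma : \mu(kS(g)\cap S(g))<\mu(S(g))^2+\eps\}$ is \emph{syndetic} (the upper-bound analogue of Khintchine's lemma). If every $k\in D$ gave commuting conjugates, then writing $\Gamma=DF$ with $F$ finite, every $h\in\Gamma$ would satisfy one of the finitely many non-trivial mixed identities $[g^i,hs^{-1}g^jsh^{-1}]=1$, $s\in F$, and Proposition \ref{prop:mif} would upgrade this to a single non-trivial mixed identity, contradicting MIF. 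So your expectation is right in spirit, but syndeticity of the small-overlap set --- not mere non-emptiness --- is the property that makes it provable.
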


Note that Theorem \ref{main1} is not a consequence of this result, since we do not know a priori whether $\langle \Gamma,g\rangle$ is {\rm MIF} or not.


Our third result is concerned with compact actions, where the situation is much easier.

\begin{theorem}\label{thm:compact}
Let $\Gamma$ be a {\rm MIF} group and let $\Gamma \curvearrowright (X,\mu )$ be a faithful, p.m.p., ergodic and compact action. If $\Gamma$ is discrete, then the action is essentially free.
\end{theorem}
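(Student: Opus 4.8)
The plan is to pass to the canonical model of an ergodic compact action and to reduce the statement to a question about finite quotients, where the MIF hypothesis can be brought to bear.

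First I would invoke the structure theorem for ergodic actions with discrete spectrum (Halmos--von Neumann and its generalisation to arbitrary $\Gamma$): the action is measurably isomorphic to the translation action on $X = K/L$, where $K = \overline{\tau(\Gamma)}$ is a compact group, $\tau\colon \Gamma \to K$ is a homomorphism with dense image, $L \leq K$ is a closed subgroup, $X$ carries the Haar measure, and $\gamma\cdot kL = \tau(\gamma)kL$. Faithfulness forces $\tau$ to be injective and lets one assume $L$ is core-free (replace $K$ by $K/N$ and $L$ by $L/N$ for $N = \bigcap_k kLk^{-1}$, the kernel of the action). In this model the fixed-point set is $\mathrm{Fix}(\gamma) = \{kL : k^{-1}\tau(\gamma)k \in L\}$, whose right-$L$-invariance gives $\mu(\mathrm{Fix}(\gamma)) = m_K(\{k : k^{-1}\tau(\gamma)k \in L\})$ for the Haar measure $m_K$. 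Essential freeness is then exactly the statement $L = 1$, and non-freeness produces some $\gamma_0 \neq 1$ with $g := \tau(\gamma_0) \neq 1$ and $m_K(\{k : k^{-1}gk \in L\}) > 0$.

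Next I would dispose of the connected part of $K$ by a dimension count requiring no arithmetic input. The set $\{k : k^{-1}gk \in L\}$ is the preimage of $C_g \cap L$ under $k \mapsto k^{-1}gk$, where $C_g$ is the conjugacy class of $g$; its Haar measure is positive only if $C_g \cap L$ is relatively open in $C_g$. When $K$ is connected, $C_g$ is connected, so this forces $C_g \subseteq L$, i.e. $g \in \bigcap_k kLk^{-1} = 1$, a contradiction. Hence the obstruction to freeness is carried entirely by the profinite quotient $P = K/K^{0}$, and the problem reduces to $K = \varprojlim K_i$ profinite with $\Gamma$ surjecting onto each finite quotient $K_i$ (density into a finite group is surjectivity). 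Here positivity of $\mu(\mathrm{Fix}(\gamma_0))$ becomes a uniform lower bound: writing $X = \varprojlim K_i/L_i$, the image $g_i$ of $\gamma_0$ fixes at least a proportion $c = \mu(\mathrm{Fix}(\gamma_0)) > 0$ of the points of the transitive $K_i$-set $K_i/L_i$, for every $i$.

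The heart of the argument, and the step I expect to be the main obstacle, is to convert this coherent family of large stabilizers into a contradiction with MIF. I would first note that equicontinuity of a compact action blocks the usual route to non-discreteness: an element of $\Gamma$ whose image in $K$ is close to $1$ need \emph{not} have small support, so one cannot produce small-support elements by letting conjugators converge. Instead the aim is to manufacture a single nontrivial word $w \in \Gamma \ast \mathbb{Z}$ whose word map vanishes identically, built from the variable, the element $\gamma_0$, and constants drawn from $\Gamma$. The mechanism should be that in each $K_i$ the positive density of conjugators carrying $g_i$ into $L_i$ (promoted to a translate of a neighbourhood by a Steinhaus-type argument, using $c>0$) forces a prescribed product of conjugates $t^{-1}\gamma_0^{\pm 1} t$ to lie in $L_i$ and ultimately to act trivially on $K_i/L_i$; the delicate point is to arrange the combinatorics of $w$ so that triviality holds \emph{simultaneously for all $i$} even though $|K_i| \to \infty$, which is precisely where the uniform bound $c$ must enter. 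Throughout, Theorem~\ref{main2} serves as a useful a priori constraint, since under discreteness it already forces $\mu(\mathrm{Fix}(\gamma)) \leq 1/2$ for every nontrivial $\gamma$, ruling out half of the possible fixed-point densities and narrowing the profinite data one must contradict.
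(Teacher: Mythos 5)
There is a genuine gap, in fact two, and the first one is fatal to the chosen route. Your reduction of the problem to the profinite quotient $P=K/K^{0}$ is false. The connected-case argument is fine for $K$ connected (modulo the fact that ``positive measure implies relatively open'' is not true for general closed sets --- you need that $C_g\cap L$ is an analytic subset of $C_g$, plus an inverse-limit argument for non-Lie $K$), but it does not yield the claimed dichotomy for disconnected $K$, because an element with positive-measure fixed set can lie in $K^{0}$, where the relevant stabilizers need not be core-free \emph{in} $K^{0}$ even though $L$ is core-free in $K$. Concretely, take $K=(\mathrm{SO}(3)\times \mathrm{SO}(3))\rtimes \mathbb{Z}/2\mathbb{Z}$ with the flip action and $L=\mathrm{SO}(3)\times\{1\}$. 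Then $L$ is core-free in $K$ (its two $K$-conjugates meet trivially), yet every $g=(h,1)$ with $h\neq 1$ acts trivially on the $K^{0}$-orbit $K^{0}/L$, which has measure $1/2$ in $K/L$, while $g$ has trivial image in $K/K^{0}=\mathbb{Z}/2\mathbb{Z}$. So a nontrivial element can fix a set of measure $1/2$ and leave no trace whatsoever in any finite quotient $K_i/L_i$; your subsequent analysis of the profinite data never sees it. (The connected argument fails inside the fiber precisely because $L=L\cap K^{0}$ \emph{is} normal in $K^{0}$, though not in $K$.) The second gap is the one you flag yourself: even in the honestly profinite situation, the passage from ``$g_i$ fixes a proportion $\geq c$ of $K_i/L_i$ for all $i$'' to a single nontrivial mixed identity satisfied by all of $\Gamma$ is described but not performed, and it is not a routine step --- a finite group can have an element fixing most of a transitive set with no identity-like consequence, so the entire content of the theorem is concentrated in exactly this missing argument.

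For comparison, the paper's proof never passes to the Mackey/Halmos--von Neumann model. It sets $t=\inf_{g\neq 1}\mu(S(g))>0$ and proves $t\leq t^{2}$ directly, which forces $t=1$. Compactness of $K=\overline{\Gamma}\subseteq\mathrm{Aut}(X,\mu)$ gives a \emph{conjugation-invariant} identity neighborhood $U$ whose elements move $S(g_0)$ by less than $\epsilon$, where $g_0$ is chosen with $\mu(S(g_0))<t+\epsilon$; the MIF hypothesis, through Proposition \ref{prop:mif}, yields a nontrivial $g\in U\cap \Gamma$ with $\mu(S(g)\triangle S(g_0))<2\epsilon$ (Claim \ref{claim:largeorder}) and then a conjugator $k$ from the Khintchine-syndetic set with $\mu(S(g)\cap kS(g))<t^{2}+9\epsilon$ such that $h:=kgk^{-1}$ does not commute with $g$ (as in Claim \ref{claim:MIFnoncommute}). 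Since $g\in U$ and $U$ is conjugation-invariant, the set $A=S(g)\cap S(h)$ satisfies $\mu(gA\cap A),\,\mu(hA\cap A)\geq \mu(A)-5\epsilon$, and the refined form of Proposition \ref{prop:subset} (with the two subtracted terms, which your proposal never exploits) gives $t\leq \mu(S([g,h]))\leq 3\mu(A)-\mu(gA\cap A)-\mu(hA\cap A)\leq \mu(A)+10\epsilon\leq t^{2}+19\epsilon$. Your observation that equicontinuity blocks the production of small-support elements is exactly right; the paper's response is not to change models but to change the target, aiming for the self-improving inequality $t\leq t^{2}$ instead of small supports, with MIF used only through the soft statements that non-commuting conjugates and high-order elements exist near the identity of $K$.
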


The assumption {\rm MIF} is essential in Theorems \ref{main2} and \ref{thm:compact}, in fact, we show that for group $\Gamma$ acting essentially freely on its pro-finite completion, there exists an ascending sequence of groups containing $\Gamma$ as a finite index subgroup, whose union is dense, see Section \ref{profinite}. An example of a different kind based on restricted wreath products can be found in Remark \ref{rem:example2}.

The gist of the proofs is different to the arguments in \cite{andreasvadim}. This time the contraction principle is derived from Khintchine's inequality resp.\ the various mixing conditions that are assumed.
The rough idea of the proof is as follows. A more or less classical observation from the theory of permutation groups yields that
$$\mu(S([g,hgh^{-1}]) \leq 3 \mu(S(g) \cap hS(g)).$$ Now, ergodicity implies that for given $g \in \Gamma$, there exists $h \in \Gamma$, such that $$\mu(S(g) \cap hS(g)) \leq \mu(S(g))^2 + \varepsilon.$$ Thus, we can construct shorter and shorter elements, provided there exists $g \in \Gamma$ with $\mu(S(g))<1/3$. The remaining assumptions are needed to overcome the problem that the iterated commutators could in fact become trivial in $G$ and thus be not useful for the purpose of proving non-discreteness. This gives the idea of a proof of Theorem \ref{main1} and of Theorem \ref{main2} when $1/2$ is replaced by $1/3$ -- while the case $1/2$ needs a more sophisticated approach. These arguments and the proof of Theorem \ref{thm:compact} make use of a finer study of the support of the commutator, see Proposition \ref{prop:subset}. 

\vspace{.2cm}

We would like to mention that to the best of our knowledge, there is no known example of a discrete ergodic action of a MIF group, which is not essentially free. Thus, we put forward the following question:

\begin{question}\label{qYG}
Is every faithful, discrete, ergodic and p.m.p.\ action of a countable MIF group essentially free?
\end{question}
We heard this question in some form from Yair Glasner 15 years ago (for free groups), but maybe its origins go back even further. A positive answer amounts to replacing $1/2$ by $1$ in Theorem \ref{main2}. 
A related question, whose negative answer would be a consequence of a positive answer to the previous question, is the following:
\begin{question} \label{q2}
Does the full group of a hyperfinite equivalence relation contain a discrete free subgroup?
\end{question}
In \cite[Section 1.2]{MR3568978}, a negative answer to this question appeared as a conjecture attributed to the third-named author. Related to this, it might be that every discrete subgroup of the full group of the hyperfinite equivalence relation is amenable. Note, that it was proved that discrete free subgroups exist in the group of invertible elements of the mod-$p$ analogue of the hyperfinite II$_1$-factor, see \cite{MR3795479}. The analogous question for the unitary group of the actual hyperfinite II$_1$-factor is open.

\section{Contraction from equidistribution for mixing actions}

\label{contraction}

\subsection{Preliminaries on actions and mixed identities}
Let $\Gamma$ be a group. For any $g,h\in \Gamma$ define $[g,h] := ghg^{-1}h^{-1}$, so that $[g,h]^{-1}=[h,g]$. If $\Gamma\curvearrowright X$ is an action of $\Gamma$ on a set $X$ then for $g\in \Gamma$ define the sets
\[
S(g) \coloneqq \{ x \in X \mid gx \neq x \} \quad \text{and} \quad
F(g) \coloneqq \{ x \in X \mid gx=x \} = X\setminus S(g) .
\]
Then $S(g)=S(g^{-1})$ and $S(ghg^{-1})=gS(h)$ (and similarly for $F$ in place of $S$).

\begin{proposition}\label{prop:subset}
Let $\Gamma \curvearrowright X$ be an action of a group on a set $X$. Let $g,h\in \Gamma$ and let $A_{g,h}= S(g)\cap S(h)$. Then we have
\[
S([g,h]) \subseteq A_{g,h}\cup gA_{g,h}\cup hA_{g,h} 
\]
Moreover, if $\mu$ is a $\Gamma$-invariant probability measure on $X$ then
\[
\mu (S([g,h])) \leq 3\mu (A_{g,h}) - \mu (gA_{g,h}\cap A_{g,h}) - \mu (hA_{g,h}\cap A_{g,h}) .
\]
\end{proposition}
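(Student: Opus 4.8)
The plan is to first establish the set inclusion $S([g,h]) \subseteq A_{g,h} \cup gA_{g,h} \cup hA_{g,h}$ by contraposition, and then to read off the measure inequality from inclusion-exclusion together with the $\Gamma$-invariance of $\mu$. Write $A = A_{g,h} = S(g) \cap S(h)$. Suppose $x \notin A \cup gA \cup hA$; the goal is to show $[g,h]x = x$. Unwinding the three exclusions and using $S(g) = S(g^{-1})$ and $S(ghg^{-1}) = gS(h)$, the condition $x \notin A$ says that $x$ is fixed by $g$ or by $h$; the condition $x \notin gA$ says that $g^{-1}x$ is fixed by $g$ or by $h$; and the condition $x \notin hA$ says that $h^{-1}x$ is fixed by $g$ or by $h$.

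The core of the argument is then a short, symmetric case analysis on which generator fixes $x$. I would follow the orbit
$$x \ \longmapsto\ h^{-1}x \ \longmapsto\ g^{-1}h^{-1}x \ \longmapsto\ hg^{-1}h^{-1}x \ \longmapsto\ [g,h]x$$
and verify that the endpoint returns to $x$ in every admissible configuration. If $gx = x$, then the exclusion at $h^{-1}x$ forces either $hx = x$, in which case both generators fix $x$ and $[g,h]x = x$ at once, or $g$ fixes $h^{-1}x$, so that $g^{-1}h^{-1}x = h^{-1}x$, hence $hg^{-1}h^{-1}x = x$, and a final application of $g$ leaves $x$ fixed. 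The case $hx = x$ is entirely symmetric, this time invoking the exclusion at $g^{-1}x$. Since $x \notin A$ guarantees that at least one of $gx = x$ or $hx = x$ holds, these cases are exhaustive and the inclusion follows.

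For the measure estimate I would apply inclusion-exclusion to the three-fold union and use $\Gamma$-invariance, which gives $\mu(gA) = \mu(hA) = \mu(A)$, to obtain
$$\mu(A \cup gA \cup hA) = 3\mu(A) - \mu(A \cap gA) - \mu(A \cap hA) - \mu(gA \cap hA) + \mu(A \cap gA \cap hA).$$
Since $A \cap gA \cap hA \subseteq gA \cap hA$, monotonicity of $\mu$ makes the final two terms combine to a nonpositive quantity, which I simply discard. Bounding $\mu(S([g,h]))$ by $\mu(A \cup gA \cup hA)$ through the inclusion then yields exactly $3\mu(A) - \mu(gA \cap A) - \mu(hA \cap A)$, as claimed.

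I expect the only real obstacle to be organizational: making sure the case analysis for the inclusion is genuinely exhaustive, covering every combination of ``$g$ or $h$ fixes'' at the three tracked points $x$, $g^{-1}x$, and $h^{-1}x$ without gaps. Once the inclusion is in hand, the measure bound is automatic, since it relies only on inclusion-exclusion, invariance, and the trivial triple-to-double containment.
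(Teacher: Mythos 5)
Your proposal is correct and takes essentially the same route as the paper: the paper proves the inclusion directly (if $x\in S([g,h])$ and, say, $hx=x$, then $x\in S(g)\cap S(ghg^{-1})=g(S(g)\cap S(h))$), which is precisely the contrapositive of your case analysis, with the same underlying algebra. Your handling of the measure bound --- inclusion-exclusion, invariance, and discarding the nonpositive combination $-\mu(gA_{g,h}\cap hA_{g,h})+\mu(A_{g,h}\cap gA_{g,h}\cap hA_{g,h})$ --- is exactly the step the paper summarizes by saying two terms cancel in the inclusion-exclusion formula.
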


\begin{proof}
We begin with the containment. Suppose that $x\in S([g,h])$, i.e., suppose that $ghg^{-1}h^{-1}x\neq x$. It is clear that this implies that $x\in S(g)\cup S(h) = (S(g)\cap S(h))\cup (S(g)\setminus S(h)) \cup (S(h)\setminus S(g))$. If $x\in S(g)\cap S(h)$ then we are done, so we just need to consider the two cases (1) $x\in S(g)\setminus S(h)$, and (2) $x\in S(h)\setminus S(g)$. In case (1) we have $x\neq ghg^{-1}h^{-1}x=ghg^{-1}x$, so $x\in S(ghg^{-1})$, and hence $x \in S(g)\cap S(ghg^{-1})=g(S(g)\cap S(h))$, as desired. Since $S([g,h])=S([h,g])$, this case is symmetric to Case (1) but with the roles of $g$ and $h$ interchanged, and hence in this case we have $x\in h(S(g)\cap S(h))$.

\begin{figure}[h] \label{venn4}
\includegraphics[width=220pt]{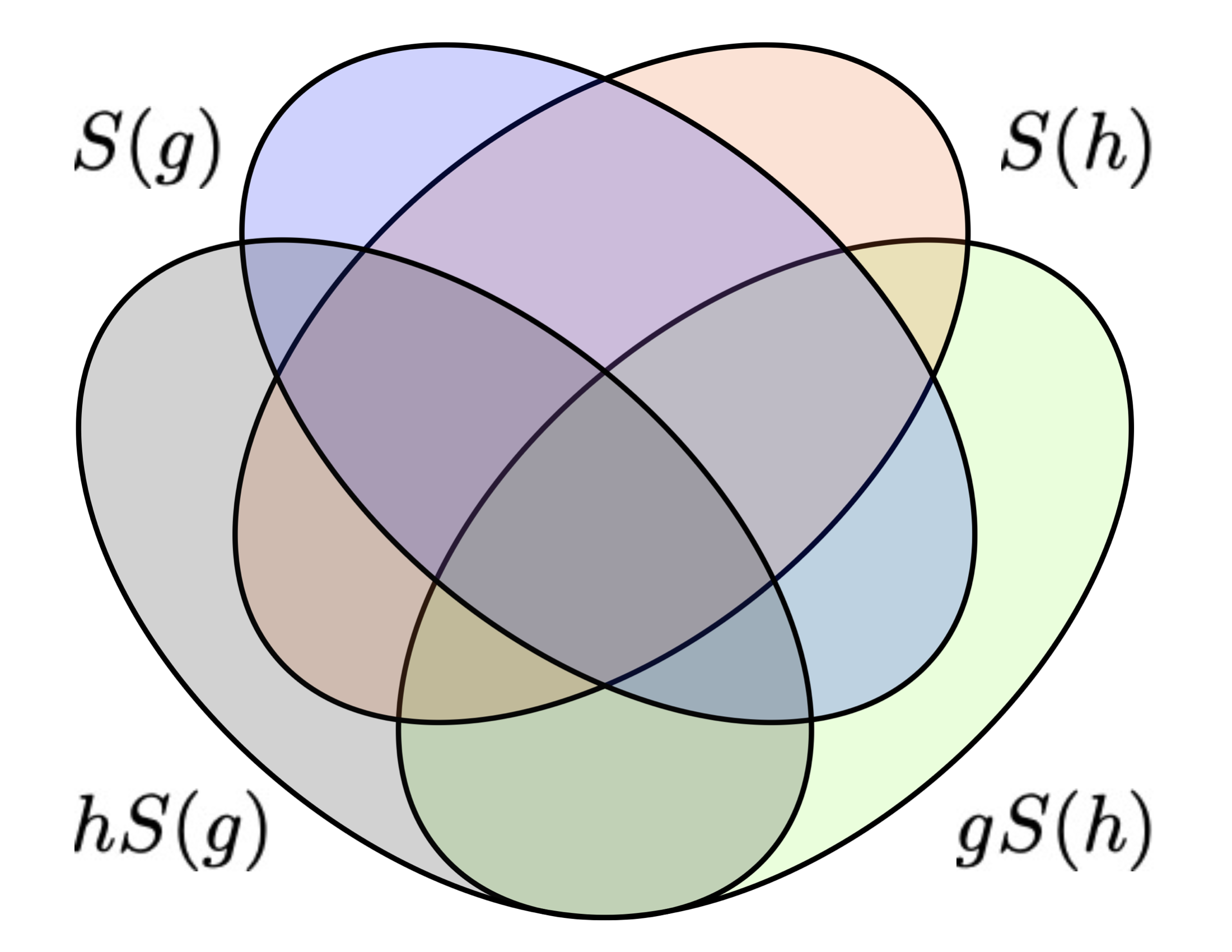}
\caption{Venn diagram of four sets}
\end{figure}

The inequality now follows from applying the inclusion-exclusion principle and noting that two of the terms cancel out, see Figure \ref{venn4}.
\end{proof}

\begin{lemma}[\cite{MR1774423}]\label{lem:recurrence}
Let $1\geq c>0$ and $1\geq \epsilon >0$ be given and let $n$ be an integer with $n>\frac{1-c}{c\epsilon} + 1$. Let $(Y, \nu )$ be a probability space, let $a\geq c$, and suppose that $A_0,\dots , A_{n-1}$ are measurable subsets of $Y$ each having measure $a$. Then there exist distinct $0\leq i,j < n$ with $\nu(A_i\cap A_j)> a^2(1-\epsilon )$.

In particular, if $T:(Y,\nu )\rightarrow (Y,\nu )$ is an invertible measure preserving transformation and $A\subseteq Y$ is measurable with $\nu (A)\geq c$, then there exists some $1\leq j<n$ such that $\nu (T^jA\cap A)>\nu (A)^2(1-\epsilon )$.
\end{lemma}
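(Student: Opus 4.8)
The plan is to prove this as a pigeonhole-type statement about measures of pairwise intersections, and then specialize to the orbit of a transformation $T$.

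For the first part, I would argue by contradiction. Suppose that for all distinct $i,j$ we have $\nu(A_i \cap A_j) \leq a^2(1-\epsilon)$. The natural tool is to estimate $\int_Y f^2 \, d\nu$ where $f = \sum_{i=0}^{n-1} \mathbf{1}_{A_i}$ is the overlap-counting function. On the one hand, by linearity $\int_Y f \, d\nu = \sum_i \nu(A_i) = na$, so by Cauchy--Schwarz we get a lower bound $\int_Y f^2 \, d\nu \geq (na)^2$ (using $\nu(Y)=1$). On the other hand, expanding the square gives
\[
\int_Y f^2 \, d\nu = \sum_{i} \nu(A_i) + \sum_{i \neq j} \nu(A_i \cap A_j) = na + \sum_{i\neq j}\nu(A_i\cap A_j).
\]
Under the contradiction hypothesis the off-diagonal sum is at most $n(n-1)a^2(1-\epsilon)$. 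Combining these, I would obtain $(na)^2 \leq na + n(n-1)a^2(1-\epsilon)$. The goal is then to show this inequality fails once $n$ exceeds the stated threshold; dividing through by $na$ and rearranging should produce a bound of the form $n \leq \frac{1-a+a^2 - a^2\epsilon}{a\epsilon} + \text{(lower order)}$, and using $a \geq c$ together with monotonicity in $a$ should reduce this to the clean threshold $n > \frac{1-c}{c\epsilon} + 1$, giving the contradiction.

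For the ``in particular'' clause, I would set $A_i := T^i A$ for $i = 0, \dots, n-1$. Since $T$ is measure preserving, each $\nu(A_i) = \nu(A) = a \geq c$, so the first part applies and yields distinct $0 \leq i < j < n$ with $\nu(A_i \cap A_j) > a^2(1-\epsilon)$. The final step is to convert this into a single power of $T$: applying $T^{-i}$, which preserves $\nu$, gives $\nu(T^{-i}(A_i \cap A_j)) = \nu(A \cap T^{j-i}A)$, and with $1 \leq j - i < n$ this is exactly the claimed $\nu(T^{j-i}A \cap A) > \nu(A)^2(1-\epsilon)$.

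The main obstacle I anticipate is purely arithmetic: matching the quadratic inequality $(na)^2 \leq na + n(n-1)a^2(1-\epsilon)$ to the precise threshold $n > \frac{1-c}{c\epsilon}+1$ while correctly handling the role of $a$ versus $c$. The subtlety is that the bound one naively derives is in terms of $a$, and one must verify that the worst case over $a \in [c,1]$ occurs at $a = c$ (or otherwise that the stated threshold with $c$ suffices for all admissible $a$), so that a single value of $n$ works uniformly. I would check the monotonicity of the relevant expression in $a$ carefully rather than trusting the heuristic, since an off-by-one or a mis-handled direction of monotonicity is exactly where such a counting argument tends to break.
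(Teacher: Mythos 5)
Your proposal is correct and follows essentially the same argument as the paper: a contradiction via Cauchy--Schwarz applied to $\sum_i \mathbf{1}_{A_i}$, the bound $na + n(n-1)a^2(1-\epsilon)$ on the second moment, and monotonicity in $a$ to reduce to the threshold in $c$. The arithmetic you worried about is in fact cleaner than your sketch suggests: dividing $(na)^2 \leq na + n(n-1)a^2(1-\epsilon)$ by $na$ and rearranging gives directly $n \leq \frac{1-a}{a\epsilon}+1$, which is at most $\frac{1-c}{c\epsilon}+1$ since $\frac{1-a}{a}$ is decreasing in $a$.
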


\begin{proof}
Assume that $\nu (A_i\cap A_j)\leq a^2(1-\epsilon )$ for all distinct $i,j$, and we will show that $n\leq \frac{1-a}{a\epsilon} + 1$, which will give a contradiction since $\frac{1-a}{a\epsilon}+1\leq \frac{1-c}{c\epsilon}+1$. By Cauchy-Schwarz we have
\begin{align*}
n^2a^2 = \big( \int \sum _{i<n}1_{A_i}\, d\nu \big) ^2 \leq \int \big( \sum _{i<n}1_{A_i}\big) ^2 \, d\nu  &= \sum _{i<n}\nu (A_i) + \sum _{i<n}\sum _{j\neq i} \nu (A_i\cap A_j) \\
&\leq na + n(n-1)a^2(1-\epsilon),
\end{align*}
and solving for $n$ shows $n\leq \frac{1-a}{a\epsilon} + 1$.
\end{proof}

We need the following proposition:
\begin{proposition} \label{prop:mif}
Let $\Gamma$ be a group and suppose there exists $w_1,\dots,w_k \in \Gamma \ast \mathbb Z$, such that for all $g \in \Gamma$ there exists $1 \leq i \leq k$, such that $w_i(g)=1_{\Gamma}.$ Then, $\Gamma$ satisfies a non-trivial mixed identity.
\end{proposition}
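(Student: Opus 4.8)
The plan is to turn the \emph{disjunctive} hypothesis (for every $g$, at least one $w_i(g)=1_\Gamma$) into a single non-trivial element of a free product over $\Gamma$ that vanishes under \emph{every} substitution. The elementary fact that powers the whole argument is that a commutator $[a,b]$ equals $1$ as soon as \emph{either} $a=1$ or $b=1$; iterating this produces a word that dies whenever any one of the $w_i$ dies. I assume throughout that each $w_i\neq 1$ in $\Gamma\ast\langle t\rangle$: this is genuinely needed, since a trivial $w_i$ has full solution set yet contributes nothing toward a \emph{non-trivial} identity.

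First I would build the word in several variables. Work in $\Gamma\ast F$ with $F$ free on $t,s_1,\dots,s_{k-1}$, and regard each $w_i=w_i(t)\in\Gamma\ast\langle t\rangle$. Set $W_1:=w_1(t)$ and, for $2\le j\le k$,
\[
W_j:=\bigl[\,W_{j-1},\; s_{j-1}\,w_j(t)\,s_{j-1}^{-1}\,\bigr].
\]
The vanishing property is immediate by induction: under any substitution $t\mapsto g$, $s_i\mapsto h_i$ with $g,h_i\in\Gamma$, if $w_j(g)=1_\Gamma$ then the right entry of the commutator is trivial, while if some earlier $w_i(g)=1_\Gamma$ then $W_{j-1}$ already evaluates to $1_\Gamma$; in either case the commutator is $1_\Gamma$. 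Thus $W_j$ evaluates to $1_\Gamma$ whenever $w_i(g)=1_\Gamma$ for some $i\le j$, and for $j=k$ the covering hypothesis guarantees that $W_k$ vanishes under every substitution.

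Next I would prove $W_k\neq 1$ in $\Gamma\ast F$ by induction, using the normal form in free products and the fact that each step introduces a \emph{fresh} generator. Writing $G_{j-1}:=\Gamma\ast\langle t,s_1,\dots,s_{j-2}\rangle$, both $W_{j-1}$ and $w_j(t)$ lie in $G_{j-1}$, while $\Gamma\ast\langle t,s_1,\dots,s_{j-1}\rangle=G_{j-1}\ast\langle s_{j-1}\rangle$. With $a:=W_{j-1}$, $b:=w_j(t)$, $s:=s_{j-1}$ we get
\[
W_j=[a,sbs^{-1}]=a\,s\,b\,s^{-1}\,a^{-1}\,s\,b^{-1}\,s^{-1},
\]
and if $a\neq 1$ and $b\neq 1$ this is an alternating product of eight non-trivial syllables lying alternately in $G_{j-1}$ and in $\langle s\rangle$, hence already reduced and in particular $\neq 1$. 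Since $W_1=w_1\neq 1$ and every $w_j\neq 1$, induction yields $W_k\neq 1$, so $W_k$ is a non-trivial mixed identity over $\Gamma$ in the variables $t,s_1,\dots,s_{k-1}$.

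The last step, reducing from several variables to one (as the definition of MIF demands), is the step I expect to be delicate. The natural move is to specialize $s_j\mapsto \gamma_j\,t\,\gamma_j^{-1}$ for suitable $\gamma_j\in\Gamma$: any such specialization preserves the vanishing property, since one only substitutes particular values for the $s_j$, and it lands in $\Gamma\ast\langle t\rangle$. The real content is to choose the $\gamma_j$ so that the specialized word stays non-trivial. Here one uses that a conjugate $\gamma t\gamma^{-1}$ with $\gamma\in\Gamma\setminus\{1\}$ never lies in $\langle t\rangle$ (as $\Gamma\cap\langle t\rangle=\{1\}$), so it centralizes no non-trivial power of $t$; choosing the $\gamma_j$ to avoid the finitely many coincidences that could create cancellation keeps each commutator stage reduced, which is possible whenever $\Gamma$ is infinite. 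When $\Gamma$ is finite there is nothing to do, as it already has finite exponent $N$ and $t^{N}$ is a non-trivial single-variable mixed identity. Either way $\Gamma$ satisfies a non-trivial mixed identity. Guaranteeing non-triviality after this specialization is exactly the standard equivalence between the one-variable and the many-variable formulations of the MIF property, and it is the only point of the argument requiring real care.
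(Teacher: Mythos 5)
Your proof is correct and takes essentially the route the paper intends: the paper's proof is only a citation to the standard iterated-commutator exercise (the proof of Lemma 2.2 in the cited Kozma--Thom paper), and your construction $W_j=[W_{j-1},\,s_{j-1}w_j(t)s_{j-1}^{-1}]$ with fresh conjugating variables, followed by the generic specialization $s_j\mapsto \gamma_j t\gamma_j^{-1}$ (and the exponent identity $t^N$ when $\Gamma$ is finite), is exactly that exercise carried out in detail. The one step you leave sketched---that avoiding finitely many values of each $\gamma_j$ keeps the specialized word reduced, since every segment $\gamma_j^{-1}a\gamma_j$ then begins and ends with a non-trivial $\Gamma$-syllable so no $t$-syllable can cancel---is indeed routine by the normal-form argument you indicate, so there is no genuine gap.
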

\begin{proof}
This is a standard exercise using iterated commutators. See for example the proof of \cite[Lemma 2.2]{MR3451381}.
\end{proof}

\subsection{Weakly mixing actions}
Let's recall that a subset $T \subset \Gamma$ is called {\it syndetic}, if there exists a finite subset $S \subset \Gamma$ with $TS=\Gamma$. Recall Khintchine's lemma \cite{MR1556883} from 1934, which is a straightforward quantitative form of Poincar\'e's recurrence theorem.
\begin{lemma}[Khintchine] \label{MR1556883}
Let $\Gamma$ be a group and $\Gamma \curvearrowright (X,\mu)$ be p.m.p.\ ergodic action on a standard probability space. Let $A,B \subset X$ be measurable subsets. For every $\varepsilon>0$, there exists $g \in \Gamma$, such that
$$\mu(A)\mu(B) - \varepsilon < \mu(gA \cap B).$$
Moreover, the set $T:=\{g \in \Gamma \mid \mu(A)\mu(B) - \varepsilon < \mu(gA \cap B) \}$ is syndetic.
\end{lemma}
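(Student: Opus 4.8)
The plan is to transport the problem into the Koopman representation $\pi$ of $\Gamma$ on $L^2(X,\mu)$, where $(\pi(g)f)(x) = f(g^{-1}x)$, and to read $\mu(gA\cap B)$ as an inner product. Indeed $\pi(g)1_A = 1_{gA}$, so $\mu(gA\cap B) = \langle \pi(g)1_A, 1_B\rangle$; writing $1_A = \mu(A)\cdot 1 + f_A$ and $1_B = \mu(B)\cdot 1 + f_B$ with $f_A,f_B$ of integral zero, the invariance of the constant function $1$ under every $\pi(g)$ kills the three mixed terms and leaves
\[
\mu(gA\cap B) = \mu(A)\mu(B) + \langle \pi(g)f_A, f_B\rangle .
\]
Thus the statement is equivalent to the assertion that $T = \{g \in \Gamma : \langle \pi(g)f_A, f_B\rangle > -\varepsilon\}$ is nonempty and syndetic. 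If $\mu(A)$ or $\mu(B)$ lies in $\{0,1\}$ then $f_A$ or $f_B$ vanishes and $T=\Gamma$, so I may assume $f_A,f_B\neq 0$.

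The one structural input I would use is ergodicity, in the form that the only $\pi(\Gamma)$-invariant vectors are the constants; hence $f_A$, having integral zero, has vanishing orthogonal projection onto the invariant subspace. By the Alaoglu--Birkhoff mean ergodic theorem, which holds for an arbitrary group with no amenability assumption, the projection of a vector onto the invariants is the unique minimal-norm element of the closed convex hull of its orbit; consequently $0$ lies in $\overline{\mathrm{conv}}\{\pi(g)f_A : g\in\Gamma\}$. Fixing $\delta := \varepsilon/\|f_B\| > 0$, I therefore obtain finitely many $g_1,\dots,g_m$ and convex weights $\lambda_1,\dots,\lambda_m \geq 0$ with $\sum_j\lambda_j = 1$ and $\bigl\| \sum_j \lambda_j \pi(g_j) f_A \bigr\| < \delta$.

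For nonemptiness I would average: $\sum_j \lambda_j \langle \pi(g_j)f_A, f_B\rangle = \langle \sum_j \lambda_j \pi(g_j) f_A, f_B\rangle$, whose absolute value is $< \delta\|f_B\| = \varepsilon$ by Cauchy--Schwarz; since a weighted average of the numbers $\langle \pi(g_j)f_A, f_B\rangle$ exceeds $-\varepsilon$, at least one $g_j$ lies in $T$. For syndeticity I would argue by contradiction, using the elementary reformulation that $T$ fails to be syndetic precisely when for every finite $F\subseteq\Gamma$ there is $\gamma$ with $\gamma F \cap T = \emptyset$. Applying this to $F = \{g_1,\dots,g_m\}$ yields $\gamma$ with $\langle \pi(\gamma g_j)f_A, f_B\rangle \leq -\varepsilon$ for all $j$, whence $\sum_j \lambda_j \langle \pi(\gamma g_j)f_A, f_B\rangle \leq -\varepsilon$. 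But the left-hand side equals $\langle \sum_j \lambda_j \pi(g_j)f_A, \pi(\gamma^{-1})f_B\rangle$ after moving the unitary $\pi(\gamma)$ to the second slot, and its absolute value is again $< \delta\|f_B\| = \varepsilon$ because $\|\pi(\gamma^{-1})f_B\| = \|f_B\|$; this contradiction forces $T$ to be syndetic.

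The only genuinely nontrivial ingredient is the passage from ``no invariant component'' to ``$0$ lies in the closed convex hull of the orbit.'' For amenable $\Gamma$ this is the usual F\o lner-averaged mean ergodic theorem, but since $\Gamma$ is an arbitrary countable group I expect the essential point to be invoking the group-independent Alaoglu--Birkhoff form; once it is in hand, everything else reduces to a two-line Cauchy--Schwarz estimate together with the combinatorial characterization of (non-)syndeticity. A variant that sidesteps quoting Alaoglu--Birkhoff would be to establish $0 \in \overline{\mathrm{conv}}\{\pi(g)f_A\}$ directly from ergodicity, but the mean ergodic statement is the cleanest route and handles existence and syndeticity uniformly through the single approximating convex combination.
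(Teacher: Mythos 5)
Your proof is correct and complete. There is no in-paper argument to compare it against: the paper states this lemma with an attribution to Khintchine's 1935 article and gives no proof, so your write-up is a genuine supplement rather than a variant of the paper's reasoning. What you give is the standard modern Hilbert-space proof, and it is the right one at this level of generality: the Koopman identity $\mu(gA\cap B)=\mu(A)\mu(B)+\langle\pi(g)f_A,f_B\rangle$ holds (the mean-zero parts $f_A,f_B$ are orthogonal to the invariant constants), ergodicity forces the invariant projection of $f_A$ to vanish, and the Alaoglu--Birkhoff theorem --- which indeed requires no amenability --- places $0$ in the closed convex hull of the orbit of $f_A$. A single approximating convex combination with $\bigl\|\sum_j\lambda_j\pi(g_j)f_A\bigr\|<\varepsilon/\|f_B\|$ then yields both conclusions: nonemptiness by averaging, and syndeticity by the translation trick, where unitarity of $\pi(\gamma)$ preserves the Cauchy--Schwarz bound; your combinatorial reformulation of syndeticity agrees with the paper's definition ($TS=\Gamma$ for finite $S$ iff every left translate of some fixed finite set meets $T$). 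Two trivial remarks: in the expansion of $\langle\pi(g)1_A,1_B\rangle$ only two cross terms vanish (the third surviving term is $\mu(A)\mu(B)\langle 1,1\rangle$), and the degenerate case $f_A=0$ or $f_B=0$ is exactly $\mu(A)\in\{0,1\}$ or $\mu(B)\in\{0,1\}$, which you correctly dispatch with $T=\Gamma$. Your method is also consonant with the tools the paper does prove or cite (the Cauchy--Schwarz recurrence lemma from Bergelson's article and the Bergelson--Rosenblatt equidistribution theorem are both Hilbert-space arguments), so it fits naturally into the paper's toolkit; the only thing amenability would have bought is a F\o lner-averaged proof, which would be strictly less general here.
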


It is easy to see that a similar lemma holds when studying the bound $\mu(gA \cap B) < \mu(A)\mu(B) + \varepsilon$, i.e.\ the set of $g\in \Gamma$ for which it holds is syndetic. It is worth noting that in general, it is not possible to have both bounds at the same time -- just assuming ergodicity. However, assuming that the action is weakly mixing, we have the following result:

\begin{theorem}[Bergelson-Rosenblatt, \cite{MR921351}] \label{equid}
Let $\Gamma$ be a group and $\Gamma \curvearrowright (X,\mu)$ be p.m.p.\ weakly mixing action on a standard probability space. Let $A,B \subset X$ be measurable subsets. For every $\varepsilon>0$, the set 
$$T:=\{g \in \Gamma \mid |\mu(A)\mu(B) -  \mu(gA \cap B)|< \eps \}$$ is syndetic.
\end{theorem}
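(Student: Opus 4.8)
The plan is to translate the statement into the language of the Koopman representation and then to read off syndeticity from the structure theory of weakly mixing unitary representations. Write $\pi$ for the Koopman representation of $\Gamma$ on $L^2(X,\mu)$, so that $\pi(g)\xi = \xi\circ g^{-1}$ and in particular $\pi(g)1_A = 1_{gA}$. Decomposing the indicators into their mean and mean-zero parts, $1_A = \mu(A) + f$ and $1_B = \mu(B) + h$ with $f,h \in L^2_0 := L^2(X,\mu)\ominus \C 1$, and using that $\pi(g)$ fixes the constants and preserves the integral, one computes
\[
\mu(gA\cap B) = \langle \pi(g)1_A, 1_B\rangle = \mu(A)\mu(B) + \langle \pi(g)f, h\rangle ,
\]
because the two cross terms vanish ($f,h\perp 1$). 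Hence $|\mu(A)\mu(B) - \mu(gA\cap B)| = |\langle \pi(g)f, h\rangle|$, and the theorem reduces to showing that for $f,h \in L^2_0$ the set $T := \{g : |\langle \pi(g)f, h\rangle| < \eps\}$ is syndetic; we may assume $f \neq 0$, as otherwise $T = \Gamma$.

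The engine is the following reformulation of weak mixing. The action is weakly mixing precisely when $\pi|_{L^2_0}$ has no nonzero finite-dimensional subrepresentation, and by the Jacobs--de Leeuw--Glicksberg decomposition this is equivalent to the \emph{flight property}: for every $\xi \in L^2_0$ the vector $0$ lies in the weak closure of the orbit $\{\pi(g)\xi : g\in \Gamma\}$. Indeed, the JdLG splitting writes $L^2_0$ as the orthogonal sum of the almost periodic part, which is the closed span of the finite-dimensional subrepresentations, and the flight part, consisting of exactly those vectors whose orbit accumulates weakly at $0$; weak mixing makes the first summand trivial. I would take this as a known characterization, as it is one of the standard equivalent forms of weak mixing used by Bergelson and Rosenblatt.

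It remains to upgrade the resulting \emph{existence} of good group elements to \emph{syndeticity}, and here I would argue by contradiction using weak-operator compactness of the unit ball of $B(L^2)$. Note that $T$ fails to be syndetic exactly when its complement is thick, i.e. when for every finite $F \subseteq \Gamma$ there is $\gamma_F \in \Gamma$ with $\gamma_F F \subseteq T^c$, so that $|\langle \pi(\gamma_F)\pi(s)f, h\rangle| \geq \eps$ for all $s \in F$. Order the finite subsets of $\Gamma$ by inclusion and fix an ultrafilter refining the order filter; since the $\pi(\gamma_F)$ lie in the weak-operator compact unit ball, they admit a weak-operator limit $P$ along this ultrafilter, a contraction. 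For each fixed $s$ the displayed inequality holds for all $F \ni s$, hence passes to the limit to give $|\langle P\pi(s)f, h\rangle| = |\langle \pi(s)f, P^*h\rangle| \geq \eps$. Because $L^2_0$ is a closed $\pi$-invariant subspace, $P^*$ preserves it, so $\eta := P^*h \in L^2_0$, and we obtain $|\langle \pi(s)f, \eta\rangle| \geq \eps$ for every $s \in \Gamma$; in particular $\eta \neq 0$ (take $s=1$). Thus the orbit $\{\pi(s)f\}$ avoids the weak neighbourhood $\{v : |\langle v, \eta\rangle| < \eps\}$ of $0$ in $L^2_0$, contradicting the flight property of $f$. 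Therefore $T$ is syndetic, as required.

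The main obstacle is the flight property itself: converting the spectral hypothesis of weak mixing (no finite-dimensional subrepresentation) into the topological-dynamical statement that every orbit in $L^2_0$ accumulates weakly at the origin. This is precisely the content of the Jacobs--de Leeuw--Glicksberg theory, equivalently of the Ellis enveloping-semigroup analysis, and it is the one ingredient I would import rather than reprove. Once it is in hand, the reduction to matrix coefficients and the compactness bootstrap to syndeticity are both routine.
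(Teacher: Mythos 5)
Your argument is correct, but note what the paper actually does here: its entire proof is the one-line citation ``Apply \cite[Corollary 1.5]{MR921351} to the function $(1-\mu(A))\chi_A - \mu(A)\chi_{A^c}$''. That function is exactly $\chi_A - \mu(A)$, i.e.\ your mean-zero part $f$, and $\mu(gA\cap B)-\mu(A)\mu(B)=\langle \pi(g)f,1_B\rangle$, so the paper performs precisely your first reduction and then outsources the syndeticity of $\{g : |\langle \pi(g)f,h\rangle|<\varepsilon\}$ to Bergelson--Rosenblatt. What you have written is, in effect, a proof of that cited corollary: the Koopman-coefficient reduction, the Jacobs--de Leeuw--Glicksberg characterization of weak mixing as the flight property on $L^2_0$, and the upgrade from ``$0$ lies in the weak orbit closure'' to syndeticity via thickness of the complement and WOT-compactness of the unit ball; this is close in spirit to how Bergelson--Rosenblatt argue, and the compactness bootstrap is sound. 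Two small points. First, in the limit step, to get $P^*h\in L^2_0$ you should observe that the complement $\mathbb{C}1$ is also $\pi$-invariant, hence $P$-invariant, which forces $P^*$ to preserve $L^2_0$; alternatively, simply replace $P^*h$ by its orthogonal projection $\eta$ onto $L^2_0$, which changes none of the pairings $\langle \pi(s)f, P^*h\rangle$ since $\pi(s)f\in L^2_0$. Second, you import two black boxes: the JdLG splitting, and the equivalence of ``no finite-dimensional subrepresentation of $\pi|_{L^2_0}$'' with the product-ergodicity definition of weak mixing that the paper uses elsewhere (cf.\ \cite[Theorem 2.25]{MR3616077}); these are standard but of comparable depth to the corollary the paper cites, so the gain in self-containedness is modest. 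What your route buys is transparency: one sees exactly where weak mixing, as opposed to mere ergodicity (which, as the paper remarks, only yields the one-sided bound of Khintchine's lemma), enters the argument.
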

\begin{proof}
Apply \cite[Corollary 1.5]{MR921351} to the function $(1-\mu(A))\chi_A - \mu(A)\chi_{A^c}$.
\end{proof}

\begin{proof}[Proof of Theorem \ref{main1}(1):]
We denote the generator of $\mathbb Z$ by $t$. Let $\ell := \mu(S(g))$ and $\delta \in (0,1/3 - \ell(g))$. We define $\alpha_n:= (\ell- \delta)^{2^n}$ and $\gamma_n:= (3 \cdot\ell + \delta)^{2^n}/3$. We will define a sequence of elements $(w_n)_n$ in $\Gamma \ast \mathbb Z$ and denote the natural image of $w_n$ in $\langle \Gamma,g\rangle$ by $v_n:=w_n(g)$. Our construction is such that $\alpha_n < \mu(S(v_n)) < \gamma_n$ for all $n \in \mathbb N.$ In particular, $(v_n)_n$ is a sequence that converges to $1_X$ and is not equal to $1_X$ for all $n \in \mathbb N.$ It follows that $\langle \Gamma,g\rangle$ is not a discrete subgroup of $G$.

We start the inductive construction with $w_0=t$ and $v_0=g$, which clearly satisfies the claim. Let's assume that we constructed $w_n \in \Gamma \ast \mathbb Z$ already. By inductive assumption, $v_n \in \Gamma$ satisfies $\alpha:=\mu(S(v_n)) \in(\alpha_n, \gamma_n).$

We set $A := S(v_n)$ and 
$$\varepsilon:= \min\{\alpha(\gamma_n - \alpha), \alpha^2 - \alpha_n^2 \}>0.$$ Then, by Theorem \ref{equid}, the set 
$T:=\{h \in \Gamma \mid |\mu(A)^2 -  \mu(hA \cap A)|< \eps \}$ is syndetic. Let's consider some $h \in T.$ By Proposition \ref{prop:subset}, we have 
$$\mu(\{x \mid [hv_nh^{-1},v_n]x \neq x\}) < 3 (\alpha^2 + \varepsilon).$$ By our choice of $\varepsilon$, we get $$3 (\alpha^2 + \varepsilon) \leq 3 \alpha^2 + 3\alpha(\gamma_n - \alpha) = 3 \alpha \gamma_n < 3 \gamma_n^2 = \gamma_{n+1}.$$

We would like to set $$w_{n+1}(t) := [hw_n(t)h^{-1},w_n(t)]$$ and obtain $v_{n+1} = [hv_nh^{-1},v_n]$.
However, it remains to ensure the lower bound on $\mu(S(v_{n+1}))$. We claim that if $\Gamma$ is ${\rm MIF}$, then there exists a choice for $h \in T$ with $$\mu(S(v_{n+1}))=\mu(S([hv_nh^{-1},v_n]) \geq \mu(S(v_n))^2 - \eps.$$

Let's assume for a moment that $g \in G$ had a finite decomposition. It follows that $v_n$ has a finite decomposition, say using group elements from a finite set $E \subset \Gamma.$ Moreover, there exists a finite set of elements $F \subset \Gamma$, such that $\cup_{s \in F} Ts^{-1} = \Gamma$. Let now $q \in \Gamma$ be arbitrary and $s \in F$ such that $h:=qs \in T$.
The element $[hv_nh^{-1},v_n]$ acts on $x \in X$ as $he^{-1}_1h^{-1}e_2^{-1}he_3h^{-1}e_4$, where $e_1,e_2,e_3,e_4 \in E \cup \{e\}$. If $x \in hA \cap A$, then $e_3,e_4 \neq e$ and the word
$w(t):=te^{-1}_1t^{-1}e_2^{-1}te_3t^{-1}e_4 \in \Gamma \ast \mathbb Z$ is not conjugate to an element of $\Gamma$. Now, if the complement of the support of $[hv_nh^{-1},v_n]$ in $hA \cap A$ has positive measure, this element acts trivially on a set of positive measure, and hence $he^{-1}_1h^{-1}e_2^{-1}he_3h^{-1}e_4=1_\Gamma$ or in other words $w(q)=1_\Gamma$ for
$$w(t) = tse^{-1}_1(ts)^{-1}e_2^{-1}tse_3(ts)^{-1}e_4$$ since the action is assumed to be essentially free. Let us assume that this happens for all $h \in T.$ Then, each $q \in \Gamma$ satisfies a mixed identity with constants from $E \cup F$ as above. Since there are only finitely many such identities, Proposition \ref{prop:mif} implies that $\Gamma$ is not MIF.

Thus, there exists $h \in T$ with 
$$\mu(S(w_{n+1}(h))) \geq \mu(hA \cap A) > \alpha^2 - \varepsilon \geq \alpha^2_n = \alpha_{n+1}.$$
This finishes the proof in case where $g$ has a finite decomposition and $\Gamma$ is {\rm MIF}. Note that it proves a uniform lower bounds on $\mu(S(v_n))$ depending only on $\mu(S(g))$ and not on the complexity of the decomposition of $g$. 
Using this, we can now approximate an arbitrary $g \in G$ with $\ell(g)<1/3$ within $\delta'>0$ by an element $g' \in G$ of finite decomposition in order to obtain a word $w_n(t) \in \Gamma \ast \mathbb Z$ of controlled length and uniform bounds
$$\alpha_n < \mu(S(w_n(g')) < \gamma_n.$$ Taking $\delta'>0$ small enough shows that arbitrarily small non-trivial elements also exist in $\langle \Gamma,g \rangle.$ This finishes the proof.
\end{proof}

We would like to record another consequence of Khintchine's lemma to the structure of non-discrete subgroups of full groups.

\begin{proposition}
\label{dense}
Let $\Gamma \curvearrowright (X,\mu)$ be an ergodic action and let $\Lambda \subset [\Gamma \curvearrowright (X,\mu)]$ be a non-discrete subgroup containing $\Gamma$. Then,
$\{\mu(S(g)) \mid g \in \Lambda \} \subset [0,1]$ is dense in $[0,1]$.
\end{proposition}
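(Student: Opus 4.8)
Throughout write $\ell(g)=\mu(S(g))$ and recall from the introduction that $d(g,h)=\ell(gh^{-1})$ is a bi-invariant metric; in particular $\ell$ is $1$-Lipschitz for $d$, i.e.\ $|\ell(g)-\ell(h)|\le \ell(gh^{-1})$ for all $g,h\in\Lambda$. Put $V:=\{\ell(g)\mid g\in\Lambda\}$. Non-discreteness of $\Lambda$ is precisely the statement $\delta(\Lambda)=0$, i.e.\ that $0$ is an accumulation point of $V$: there exist non-trivial elements of arbitrarily small support. The plan is to prove $\overline V=[0,1]$ by producing, from one sufficiently small element together with $\Gamma$, realized support values that increase in arbitrarily fine steps, and then running a discrete intermediate-value argument starting from the small values guaranteed above.

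The engine is the following application of Khintchine's Lemma~\ref{MR1556883}. Suppose $v\in\Lambda$ has support $B=S(v)$ with $\mu(B)=m<\tfrac12$, and let $\varepsilon_0\in\Lambda\setminus\{1\}$ have support $E$ with $\mu(E)=\rho$, where $\rho$ is as small as we like (by non-discreteness). Applying the upper-bound variant of the lemma to the two sets $E$ and $B$ produces $h\in\Gamma$ with $\mu(hE\cap B)<\rho m+\varepsilon'$, hence as small as we please. Since $S(h\varepsilon_0 h^{-1})=hE$, a direct support computation for $v\cdot h\varepsilon_0 h^{-1}$ — identical to the disjoint-support identity $S(vw)=S(v)\sqcup S(w)$, but with an error bounded by twice $\mu(hE\cap B)$ — gives $\ell(v\cdot h\varepsilon_0 h^{-1})\in[\,m+\rho-2\mu(hE\cap B),\,m+\rho\,]$. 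For $m<\tfrac12$ and $\varepsilon'$ small the left endpoint exceeds $m$, so this is a strict increase lying in $(m,m+\rho)$. Thus for every $v$ with $\ell(v)<\tfrac12$ and every $\eta>0$ there is $w\in\Lambda$ with $\ell(w)\in(\ell(v),\ell(v)+\eta)$. Chaining these up-steps of size $<\eta$, starting from an element of support $<\eta$, yields an increasing sequence of realized values with consecutive gaps $<\eta$; since such a sequence cannot jump across an interval of width $2\eta$, it hits every target in $[0,\tfrac12)$ up to $\eta$. Hence $[0,\tfrac12)\subseteq\overline V$.

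The step I expect to be the main obstacle is crossing from $[0,\tfrac12]$ into $(\tfrac12,1]$, and it is a genuine barrier, not an artifact. The support of a product is only bounded below by $\mu(S(v)\cup S(w))-2\mu(S(v)\cap S(w))$, so overlaps must be kept small to avoid cancellation; but for a general ergodic (as opposed to weakly mixing) action Khintchine only furnishes overlap close to the \emph{independent} value $\rho m$. Once $m>\tfrac12$ this overlap exceeds half of the added mass, so appending near-independent translates \emph{decreases} the support, and the construction above saturates exactly at $\tfrac12$. Reaching the upper half therefore demands a different mechanism, namely elements of $\Lambda$ of support close to $1$, together with a complementation trick: if $u\in\Lambda$ has $\ell(u)$ close to $1$, then for a built element $v$ one has $\ell(uv^{-1})=1-\mu(\{x\mid u(x)=v(x)\})$, so tuning the agreement set of $u$ and $v$ realizes values throughout $(\tfrac12,1]$. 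The crux is thus twofold: establishing $\sup V=1$ (equivalently, the existence of near-full-support elements), and controlling the agreement set $\{x\mid u(x)=v(x)\}$ with enough precision. I would attempt the first point either by exhibiting an element of $\Gamma$ whose centralizer already acts ergodically — such an element has full support, since its fixed-point set is invariant under its centralizer — or by passing to the closed subgroup $\overline\Lambda\le G$ and arguing that a non-discrete closed subgroup must contain transformations of arbitrarily large support, and I expect this to be the technically hardest part of the argument.
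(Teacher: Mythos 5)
Your construction for the lower half is sound, and it is in essence the paper's own mechanism run forwards rather than by contradiction: the paper assumes a gap $(\alpha,\alpha+\delta)$ in the value set with $\alpha$ in its closure, picks $h\in\Lambda$ with $\mu(S(h))\in[\alpha-\delta\mu(S(g))/3,\alpha]$ and a nontrivial $g\in\Lambda$ with $\mu(S(g))<\delta$, uses the upper-bound form of Khintchine's Lemma \ref{MR1556883} to find $t\in\Gamma$ with $\mu(S(h)\cap tS(g))\leq \mu(S(g))\mu(S(h))+\delta\mu(S(g))/3$, and then derives a contradiction by showing that $\mu(S(htgt^{-1}))$ lands in the gap. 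The decisive difference is the lower bound used for the support of the product. The paper takes $\mu(S(htgt^{-1}))\geq \mu(S(h))+\mu(S(g))-\mu(S(h)\cap tS(g))$, i.e.\ the measure of the union $S(h)\cup tS(g)$, which tacitly asserts that no point of the overlap is fixed by the product. Unconditionally one only has the symmetric-difference bound that you use, $\mu(S(h))+\mu(S(g))-2\mu(S(h)\cap tS(g))$, because on $S(h)\cap tS(g)$ the product fixes exactly the agreement set $\{x : tgt^{-1}(x)=h^{-1}(x)\}$, and nothing in the hypotheses here (no essential freeness, no MIF --- unlike the proof of Theorem \ref{main1}, where precisely this cancellation is excluded via mixed identities) forces that set to be null. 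With the corrected bound, the paper's contradiction requires $\mu(S(g))(1-2\alpha)$ to dominate the error terms, so it only rules out gaps with $\alpha<1/2$.

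Consequently, your proposal does have a genuine, self-acknowledged gap: the final paragraph establishes nothing about $(\tfrac12,1]$, since neither the existence of elements of $\Lambda$ with support near $1$ nor the control of agreement sets is proved, and both suggested routes (an element of $\Gamma$ with ergodically acting centralizer; passing to $\overline{\Lambda}$) are left speculative. But the barrier you identify at $1/2$ is not an artifact of your approach being weaker than the paper's: it is exactly the step at which the paper's own argument is unjustified as written. Up to $1/2$ your chaining argument is correct and completable (you should make explicit that on $[0,\tfrac12-c]$ the increments are bounded below by roughly $c\rho$, so the chain cannot stall before approaching $1/2$); beyond $1/2$, neither your proposal nor the published proof, taken literally, yields density, and closing that half appears to require a genuinely new ingredient of the kind you gesture at rather than more care with the same estimates.
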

\begin{proof} Suppose that there is a $\delta$-gap in $\{\mu(S(g)) \mid g \in \Lambda \} \subset [0,1]$, i.e., there exists $\alpha \in (0,1)$ with $(\alpha,\alpha + \delta) \cap \{\mu(S(g)) \mid g \in \Lambda \} = \varnothing$ and $\alpha \leq 1 - \delta$. Let us also assume that $\alpha$ lies in the closure of $\{\mu(S(g)) \mid g \in \Lambda \}.$ 
If $\Lambda$ is not discrete, there is an element $g \in \Lambda$ of non-trivial support of size less than $\delta$. Let $h \in \Lambda$ with $\mu(S(h)) \in [\alpha-\delta \mu(S(g))/3,\alpha].$
Now, using Khintchine's inequality from above the overlap $\mu(S(h) \cap tS(g))$ can be made as small as $\mu(S(g)) \mu(S(h)) + \delta \mu(S(g))/3$, so that
$htgt^{-1}$ has support at most
$$\mu(S(h)) + \mu(S(g)) < \alpha + \delta$$ and at least
\begin{eqnarray*}
&&\mu(S(h)) + \mu(S(g)) - \mu(S(h) \cap tS(g))\\
&\geq& \alpha - \delta \mu(S(g))/3 + \mu(S(g)) - (\mu(S(g)) \mu(S(h)) + \delta \mu(S(g))/3)\\
&\geq& \alpha - \delta \mu(S(g))/3 + \mu(S(g)) - \mu(S(g)) (1 - \delta) - \delta \mu(S(g))/3\\
&=&\alpha + \delta \mu(S(g))/3.
\end{eqnarray*}
This is a contradiction to the assumption that there was no value in the interval $(\alpha,\alpha + \delta).$ This finishes the proof.
\end{proof}

\subsection{Mixing actions}

We recall that a p.m.p.\ action of $\Gamma$ on $(X,\mu)$ is \textit{mixing} if for every measurable subset $A,B\subseteq X$ and $\varepsilon>0$, there is a finite set $F\subset \Gamma$ such that for every $\gamma\notin F$ we have \[\left|\mu(\gamma A\cap B)-\mu(A)\mu(B)\right|<\varepsilon.\]
For this we follow  \cite[Definition 2.11]{MR3616077} -- note that this notion is sometimes called {\it strong mixing}. Let us recall that factor of weakly mixing actions are weakly mixing.

\begin{lemma}
  Let $\Gamma$ be a countable group and let $\Lambda\leq \Gamma$ be a finite index subgroup. Consider a weakly mixing action $\Gamma\curvearrowright X$. Then the restriction of the action to $\Lambda$ is ergodic.
\end{lemma}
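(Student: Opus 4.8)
The plan is to use the spectral characterization of weak mixing: a p.m.p.\ action $\Gamma \curvearrowright (X,\mu)$ is weakly mixing if and only if the Koopman representation of $\Gamma$ on $L^2_0(X) := L^2(X,\mu) \ominus \mathbb{C}1$ admits no nonzero finite-dimensional invariant subspace. I would recall this standard equivalence and then argue by contradiction, manufacturing exactly such a forbidden subspace from a $\Lambda$-invariant function whenever the restricted action fails to be ergodic.

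First I would suppose that the restricted action $\Lambda \curvearrowright (X,\mu)$ is not ergodic and pick a $\Lambda$-invariant measurable function that is not (a.e.) constant. Subtracting its mean, I obtain a nonzero vector $f \in L^2_0(X)$ that is fixed by $\Lambda$. The key observation is that, since $f$ is $\Lambda$-invariant, the vector $\gamma \cdot f$ depends only on the left coset $\gamma\Lambda$: indeed for $\lambda \in \Lambda$ one has $(\gamma\lambda)\cdot f = \gamma\cdot(\lambda\cdot f) = \gamma \cdot f$. Hence the $\Gamma$-orbit of $f$ consists of at most $[\Gamma:\Lambda] < \infty$ distinct vectors.

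Consequently the linear span $V := \mathrm{span}\{\gamma \cdot f : \gamma \in \Gamma\}$ is a nonzero finite-dimensional subspace of $L^2_0(X)$; it lies in $L^2_0(X)$ because the action is measure preserving, so $\int \gamma\cdot f\, d\mu = \int f\, d\mu = 0$ for every $\gamma$. By construction $V$ is $\Gamma$-invariant, and being finite-dimensional it is automatically closed, so it is a genuine nonzero finite-dimensional subrepresentation of the Koopman representation on $L^2_0(X)$. This contradicts weak mixing of $\Gamma \curvearrowright (X,\mu)$, and the contradiction forces $\Lambda \curvearrowright (X,\mu)$ to be ergodic.

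The only real content is the spectral criterion for weak mixing; everything else is a short coset-counting argument. I expect the main (and essentially the sole) obstacle to be justifying that characterization for a general countable group $\Gamma$, as opposed to merely using the defining property that $\Gamma \curvearrowright X\times X$ is ergodic. If one wished to avoid invoking the criterion, an alternative would be to run the same coset argument inside the product action $X\times X$, but the finite-dimensionality route is by far the cleanest.
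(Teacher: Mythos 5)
Your proof is correct, but it follows a genuinely different route from the paper's. The paper first reduces to the case where $\Lambda$ is normal (every finite-index subgroup contains a finite-index normal subgroup), then considers the $\sigma$-algebra $\mathcal A$ of $\Lambda$-fixed sets; normality makes $\mathcal A$ a $\Gamma$-invariant sub-$\sigma$-algebra, hence a $\Gamma$-factor on which $\Lambda$ acts trivially. Since factors of weakly mixing actions are weakly mixing, and this factor action descends to the finite group $\Gamma/\Lambda$ (which admits no nontrivial weakly mixing action), the factor is trivial and $\Lambda$ is ergodic. You instead work directly with the Koopman representation: a nonconstant $\Lambda$-invariant function yields a nonzero $\Lambda$-fixed vector $f \in L^2_0(X)$, its $\Gamma$-orbit is finite because $\gamma \cdot f$ depends only on the coset $\gamma\Lambda$, and the span of the orbit is a nonzero finite-dimensional $\Gamma$-invariant subspace, contradicting the spectral characterization of weak mixing. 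Your coset argument has the advantage of handling non-normal $\Lambda$ directly, with no reduction step; the price is invoking the equivalence between weak mixing (defined via ergodicity of the product action, as in Kerr--Li) and the absence of finite-dimensional invariant subspaces of $L^2_0$, which is a standard but nontrivial theorem -- you correctly flag this as the one external ingredient. The paper's black boxes (factors of weakly mixing actions are weakly mixing; finite groups have no nontrivial weakly mixing actions) are comparably standard, so the two proofs are of essentially equal depth; yours is arguably the more self-contained once the spectral criterion is granted.
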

\begin{proof}
  Since $\Lambda$ contains a finite index normal subgroup, we can assume that $\Lambda$ is normal. Let us denote by $\mathcal A$ the $\sigma$-algebra of subsets fixed by $\Lambda$. Note that since $\Lambda$ is normal in $\Gamma$, the algebra $\mathcal A$ is $\Gamma$-invariant. We hence obtain a $\Gamma$-factor on which $\Lambda$ acts trivially.  Since this factor is also weakly mixing, the only possibility is that it is the trivial factor, that is if $\mathcal A$ is the trivial $\sigma$-algebra.
\end{proof}

\begin{lemma}\label{lem: if T commutes then support}
  Consider a weakly mixing action $\mathbb Z\curvearrowright X$ and let $g$ be a measure preserving transformation of $(X,\mu)$. Suppose that there is a positive measure subset $A\subseteq X$ and $z \in \mathbb Z\setminus\{0\}$ such that $gx=z x$ for all $x\in A$. If there is $k\in\mathbb Z \setminus \{0\}$ such that for every $n\in\mathbb Z$, $[g,z^{nk}g z^{-nk}]$ is trivial, then $S(g)$ has full measure. 
\end{lemma}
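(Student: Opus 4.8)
The plan is to show that the fixed-point set $F(g)$ is $z$-invariant and then to invoke ergodicity. Write $T := z^k$ and $h_n := z^{nk}gz^{-nk}$ for $n \in \mathbb Z$, so the hypothesis says that $g$ commutes with every $h_n$. The first observation is a direct computation: $h_n$ agrees with $z$ on the translate $z^{nk}A$. Indeed, for $y = z^{nk}x$ with $x \in A$ we have $h_n y = z^{nk}g z^{-nk} y = z^{nk} g x = z^{nk+1}x = zy$, using $gx = zx$ and commutativity of $\mathbb Z$. Since the $\mathbb Z$-action is weakly mixing, every nonzero power of the generator is again weakly mixing, so in particular both $z$ and $T = z^k$ are ergodic; moreover an ergodic transformation of a nonatomic space is free, so $zx \neq x$ for almost every $x$.

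The key step is the following propagation identity: if $x \in z^{nk}A \cap F(g)$ then $zx \in F(g)$. To see this, apply the commutation relation $g h_n = h_n g$ at the point $x$. Since $x \in z^{nk}A$ we have $h_n x = zx$, and since $x \in F(g)$ we have $gx = x$; hence $g(zx) = g(h_n x) = h_n(gx) = h_n x = zx$, which says precisely that $zx \in F(g)$.

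Next I would globalize this using ergodicity of $T$. The set $\bigcup_{n \in \mathbb Z} z^{nk}A$ is $T$-invariant and has measure at least $\mu(A) > 0$, hence is conull. Therefore almost every $x \in F(g)$ lies in some $z^{nk}A$, and the propagation identity yields $zx \in F(g)$ for almost every $x \in F(g)$. Thus $zF(g) \subseteq F(g)$ up to a null set, and since $z$ is measure preserving this forces $zF(g) = F(g)$, i.e.\ $F(g)$ is $z$-invariant. Finally, ergodicity of $z$ gives $\mu(F(g)) \in \{0,1\}$, and the value $1$ is impossible since it would make $g$ the identity almost everywhere, contradicting $gx = zx \neq x$ on the positive-measure set $A$. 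Hence $\mu(F(g)) = 0$, i.e.\ $S(g)$ has full measure.

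I expect the main subtlety to be the measure-theoretic bookkeeping in the globalization step, namely combining the pointwise identity (which is proved for each fixed $n$) with the ergodic covering by the translates $z^{nk}A$, together with the justification that a nonzero power of a weakly mixing transformation remains ergodic, which is exactly what allows the argument to run with the given $k$ and to close up with $z$-invariance rather than merely $z^k$-invariance.
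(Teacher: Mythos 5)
Your proof is correct, but it takes a genuinely different route from the paper's. The paper starts from the observation that $[u,v]=\mathrm{id}$ forces $S(v)$ to be $u$-invariant, so the hypothesis makes each translate $z^{nk}S(g)$ invariant under $g$; it then forms the $\sigma$-algebra generated by $\{z^{nk}S(g) \mid n\in\mathbb Z\}$, passes to the associated factor $\pi\colon X\to Y$ of $\Lambda=\langle z^k\rangle$, notes that $g$ descends to the identity on $Y$, uses $g=z$ on $A$ together with ergodicity of $\Lambda$ (supplied by the preceding finite-index lemma) to conclude that $\Lambda$ acts trivially on $Y$, hence that $Y$ is a point and $\mu(S(g))\in\{0,1\}$. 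You dispense with the factor machinery entirely: your propagation identity $g(zx)=h_n(gx)=h_nx=zx$ for $x\in z^{nk}A\cap F(g)$ shows directly that $F(g)$ is essentially $z$-invariant, once ergodicity of $z^k$ guarantees that $\bigcup_n z^{nk}A$ is conull, and then ergodicity of $z$ gives the zero-one law for $\mu(F(g))$. Both proofs consume the same external inputs --- ergodicity of nonzero powers (equivalently, finite-index subgroups) under weak mixing, and essential freeness of $z$ on a nonatomic space to rule out $\mu(F(g))=1$, a point the paper glosses over but you make explicit --- but yours is more elementary and pointwise, working with $F(g)$ rather than with a factor, while the paper's argument yields the stronger (though here unneeded) conclusion that the entire $\sigma$-algebra generated by all $\Lambda$-translates of $S(g)$ is trivial. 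The only caveat, common to both arguments, is that the underlying space must not be a single atom; this is implicit in the standing assumption of a weakly mixing action on a standard probability space.
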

\begin{proof}
Given $u,v\in\mathrm{Aut}(X,\mu)$ such that $[u,v]$ is trivial, then $S(v)$ is $u$-invariant. Indeed, if $[u,v]$ is trivial, then for every $x\in X$, $uvu^{-1}x=vx$, that is $uF(v)=F(v)$ and hence $S(v)$ is $u$-invariant.

Fix $k\in\mathbb Z \setminus \{0\}$, we will show that if for every $n\in\mathbb Z$, the set $z^{nk}S(g)$ is invariant under $g$, then $g$ has full support. Let $\Lambda$ be the group generated by $z^k$. 
Denote by $\mathcal A$ the $\sigma$-algebra generated by $\{\lambda S(g);\ \lambda\in\Lambda\}$. Clearly $\mathcal A$ is $\Lambda$-invariant and $g$ acts trivially on it. Consider the $\Lambda$-factor $\pi\colon (X,\mu)\rightarrow (Y,\nu)$ associated to it. Remark that $g$ induces the trivial action on $(Y,\nu)$, that is $\pi(gx)=\pi(x)$ for every $x\in X$. In particular, we have that $\pi(z x)=\pi(x)$ for every $x\in A$. Since by the previous lemma the action of $\Lambda$ is ergodic, for almost every $x\in X$, there is $\lambda\in \Lambda$ such that $\lambda x\in A$. Then \[\pi(z x)=\pi(\lambda^{-1}z \lambda x)=\lambda^{-1}\pi(z \lambda x)=\lambda^{-1}\pi(\lambda x)=\pi(x)\]
that is, $\pi(z x)=\pi(x)$ for almost every $x\in X$. This implies that the action of $\Lambda$ on $Y$ is trivial. However since the action of $\Lambda$ on $X$ is ergodic, this can only happens when $Y$ is the one point space, that is when $S(g)$ has full measure. 
\end{proof}


\begin{proposition}
  Let $\Gamma$ be a countable torsion free group and consider a p.m.p.\ mixing action of $\Gamma$ on $(X,\mu)$. Then for every $\varepsilon>0$ and $g\in[\Gamma\curvearrowright (X,\mu)]$ non-trivial and not of full support, there is $h\in\Gamma$ such that $[g,h gh^{-1}]$ is non-trivial and \[\mu(S([g,h gh^{-1}]))<3\mu(S(g))^2+\varepsilon.\]
\end{proposition}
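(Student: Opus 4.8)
The plan is to combine the commutator support bound of Proposition~\ref{prop:subset} with the mixing hypothesis for the upper bound, and to use torsion-freeness together with Lemma~\ref{lem: if T commutes then support} to rule out triviality. Throughout I regard $g$ and its conjugates as elements of the full group $G = [\Gamma \curvearrowright (X,\mu)]$ acting on $X$, set $A := S(g)$, and note that for $h \in \Gamma$ one has $S(hgh^{-1}) = hS(g) = hA$, so that in the notation of Proposition~\ref{prop:subset} the overlap set for the pair $g,\, hgh^{-1}$ is $A_{g,\,hgh^{-1}} = A \cap hA$.

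First I would establish the upper bound, which is the easy half. Applying Proposition~\ref{prop:subset} to the pair $g,\, hgh^{-1}$ gives
\[
\mu\big(S([g,hgh^{-1}])\big) \leq 3\,\mu(A \cap hA).
\]
By the mixing hypothesis there is a finite set $F \subset \Gamma$ such that $\mu(hA \cap A) < \mu(A)^2 + \varepsilon/3$ for every $h \notin F$; for all such $h$ the right-hand side above is $< 3\mu(S(g))^2 + \varepsilon$. Thus the desired measure bound holds for all but finitely many $h \in \Gamma$, and it only remains to find one such $h$ for which $[g,hgh^{-1}]$ is nontrivial.

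The main obstacle, and the only place the torsion-free and strong mixing assumptions enter, is forcing nontriviality. Since $g \in G$ is nontrivial, there is a nontrivial $z \in \Gamma$ and a positive measure set $A_0 \subseteq S(g)$ on which $g$ acts as $z$, i.e.\ $gx = zx$ for $x \in A_0$; as $\Gamma$ is torsion-free, $z$ has infinite order. Because the $\Gamma$-action is mixing and $z^n$ eventually leaves every finite subset of $\Gamma$, the restricted action of $\langle z\rangle \cong \mathbb Z$ is itself mixing, hence weakly mixing, so $z$ satisfies the hypotheses of Lemma~\ref{lem: if T commutes then support}. I would now restrict attention to conjugating elements of the form $h = z^m$, $m \in \mathbb Z$; since $z$ has infinite order, only finitely many of these lie in $F$, so the upper bound of the previous paragraph holds for cofinitely many $m$.

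Finally I argue by contradiction: suppose no admissible $h$ exists. Then for every $m$ with $z^m \notin F$ the commutator $[g, z^m g z^{-m}]$ must be trivial, so $[g, z^m g z^{-m}] = 1$ for all but finitely many $m \in \mathbb Z$. Choosing an integer $k$ larger than the absolute value of every exception guarantees that $[g, z^{nk} g z^{-nk}] = 1$ for every $n \in \mathbb Z$. Lemma~\ref{lem: if T commutes then support} then forces $S(g)$ to have full measure, contradicting the hypothesis that $g$ is not of full support. Hence some $h = z^m$ satisfies both the measure bound and $[g, hgh^{-1}] \neq 1$, completing the proof. The one routine verification I am suppressing is that strong mixing of $\Gamma$ passes to the cyclic subgroup $\langle z \rangle$ and implies weak mixing there, which is standard.
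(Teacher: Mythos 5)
Your proposal is correct and follows essentially the same route as the paper's proof: both locate a nontrivial $z \in \Gamma$ (infinite order by torsion-freeness) agreeing with $g$ on a positive measure set, use strong mixing of the restricted $\langle z\rangle$-action together with Proposition~\ref{prop:subset} for the upper bound, and invoke Lemma~\ref{lem: if T commutes then support} to rule out that all the relevant commutators $[g, z^{nk}gz^{-nk}]$ vanish. Your handling of the finitely many exceptional powers (choosing $k$ beyond them before applying the lemma) is in fact a slightly more careful write-up of the step the paper states tersely.
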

\begin{proof}
 There is $\lambda\in\Gamma$ and a positive measure subset $A\subseteq X$ such that $gx=\lambda x$ for every $x\in A$. 
Since the action of $\Lambda:= \langle \lambda \rangle$ is mixing, for every $\varepsilon>0$ there is $n_0\in\mathbb N$ such that for every $n\geq n_0$ we have that 
 \[\left|\mu(S(g)\cap \lambda^nS(g))-\mu(S(g))^2\right|<\frac\varepsilon 3.\]
 Proposition \ref{prop:subset} then implies that 
 \[\mu(S([g,\lambda^n g\lambda^{-n}]))<3\mu(S(g)\cap \lambda^nS(g))<3\mu(S(g))^2+\varepsilon.\]
 
 Since $g$ is not of full support, we apply Lemma \ref{lem: if T commutes then support} to obtain that there is $k\in\mathbb Z\setminus \{0\}$ such that $[g,\lambda^{nk}g\lambda^{-nk}]$ is not trivial. Thus, we can set $h:=\lambda^{nk}$ for any such $k \in \mathbb Z \setminus \{0\}$.
\end{proof}

\begin{proof}[Proof of Theorem \ref{main1}(2)] This is now immediate from the previous proposition. We define a sequence $(g_n)_n$ with $g_0=g$ and $g_{n+1} := [g_n,hg_nh^{-1}]$ for suitable $h \in \Gamma.$ If $\mu(S(g_0))<1/3,$ then the sequence $(g_n)_n$ consists of non-trivial elements and converges to the identity in the full group.
\end{proof} 

\begin{remark}
  Let $\Lambda$ be any countable group and set $\Gamma:=\Lambda\times \mathbb Z$. Let us denote by $z$ a generator of $\mathbb Z$.  
  Let us consider the Bernoulli actions of $\Lambda$ on $(\{0,1\}^\Lambda,\nu_\Lambda)$ and of $\Gamma$ on $(\{0,1\}^\Gamma,\nu_\Gamma)$. Since $\Gamma/\mathbb Z=\Lambda$, we will sometimes consider the former action as a non-faithful action of $\Gamma$ and since $\Lambda\leq \Gamma$ we will consider the latter also as a $\Lambda$ action. 
  Consider now 
 the space \[X:=\{0,1\}^\Lambda\times\{0,1\}^{\Gamma}=\{0,1\}^{\Lambda\sqcup \Gamma}\]
  equiped with the product measure $\mu:=\nu_\Lambda\times \nu_\Gamma$. Then $\Gamma$ acts on $(X,\mu)$ diagonally and the action preserves the measure. 
  
  We remark that the action of $\Lambda$ on $X$ is a generalized Bernoulli shift and that the action of  action of $\Lambda$ on $\Lambda\sqcup \Gamma$ is free. Moreover the product action of $\Gamma$ on $X\times X$ is again a generalized Bernoulli shift and hence it is ergodic. Therefore the action of $\Gamma$ on $X$ is weakly mixing, see \cite[Theorem 2.25]{MR3616077}. 
  
  Let $A\subset \{0,1\}^\Lambda$ be a positive measure subset and set $B:=A\times \{0,1\}^{\Gamma}$. Clearly $\mu(B)=\nu_\Lambda(A)$ and hence $B\subseteq X$ has positive measure. Remark also that $B$ is invariant by the $\mathbb Z$-action and denote by $g \in [\Gamma\curvearrowright (X,\mu)]$ the restriction of the generator $z$ of $\mathbb Z$ on the $\mathbb Z$-invariant set $B$. 
  
  Fix now $h=(\lambda,z^k)\in\Gamma$. Then for every $x\in \lambda A$, we have  
  \[h gh^{-1}(x,y)=h g(h^{-1}x,z^ky)=h (\lambda^{-1}x,z^{k+1}y)=(x,zy)\]
  and that if $x\notin \lambda A$, then $\lambda^{-1} x\notin A$ and hence $h gh^{-1}(x,y)=(x,y)$. That is, the element $h gh^{-1}$ is the restriction of the generator $z$ of $\mathbb Z$ on the $\mathbb Z$-invariant set $h B=\lambda A\times \{0,1\}^\Gamma$. In particular, $g$ and $h gh^{-1}$ commute. 
  
  Therefore the strategy we use in our results cannot be applied for this action. Observe that the action of $\Gamma$ is weakly mixing and not mixing and that $\Gamma$ is not MIF so neither of our two results apply. 
  
\end{remark}

\section{Ergodic actions of MIF groups} 

The aim of this section is to prove Theorem \ref{main2}. We start with some preparations. Let $1>a_0>0$. In the proof of Lemma \ref{lem:MIFrecurrence} below we will use some properties of the family of sequences $(a_{m,\delta} )_{m=0}^{\infty}$ defined for $\delta \in [0,1)$ by
\begin{align*}
a_{0,\delta} &=a_0 \\
\text{and } \ a_{m+1,\delta}&=\frac{1}{2-a_{m,\delta}(1-\delta )}.
\end{align*}
We write $a_m$ for $a_{m,0}$.

\begin{proposition}\label{prop:amdelta}
Let $a_{m,\delta}$ and $a_m$ be defined as above. Then for all $m\geq 0$ we have:
\begin{enumerate}
\item $a_m=1- \frac{1-a_0}{m(1-a_0)+1}$, and hence $\lim _{m\rightarrow\infty} a_m = 1$.
\item $0<a_{m,\delta _1}\leq a_{m,\delta _0} <1$ whenever $1> \delta _1\geq \delta _0\geq 0$.
\item $\lim _{\delta \rightarrow 0^{+}} a_{m,\delta}=a_m$.
\item If $N_0\geq 0$ is an integer and $(b_m)_{m=0}^{N_0}$ is a sequence with $1\geq b_0\geq a_0$ and $1\geq b_{m+1}\geq \frac{1}{2-b_m(1-\delta)}$ for all $0\leq m<N_0$, then $b_m\geq a_{m,\delta}$ for all $0\leq m\leq N_0$.
\item If $a_0\leq \frac{1}{1+\sqrt{\delta}}$ then $a_{0,\delta}\leq a_{1,\delta}\leq a_{2,\delta}\leq \cdots$, and $\lim _{m\rightarrow\infty}a_{m,\delta} = \frac{1}{1+\sqrt{\delta}}$.
\end{enumerate}
\end{proposition}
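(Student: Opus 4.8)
The plan is to treat all five parts through the single observation that the recursion is generated by the one-parameter family of M\"obius maps $f_\delta(x) := \frac{1}{2-x(1-\delta)}$, so that $a_{m+1,\delta} = f_\delta(a_{m,\delta})$. Two monotonicity properties of this family drive everything: for fixed $\delta \in [0,1)$ the map $f_\delta$ is strictly increasing in $x$ on $[0,1]$ (there $2 - x(1-\delta)>0$, and the denominator decreases in $x$), while for fixed $x \in [0,1]$ the value $f_\delta(x) = \frac{1}{2-x+x\delta}$ is non-increasing in $\delta$ (the denominator increases with $\delta$). Moreover $f_\delta$ maps $[0,1]$ into $(\tfrac12, 1)$, which immediately yields $0 < a_{m,\delta} < 1$ for all $m\geq 1$, hence for all $m$.

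For (1), I would substitute the ansatz $a_m = 1 - \frac{c}{mc+1}$ with $c := 1-a_0$ and check by direct computation that $2 - a_m = \frac{(m+1)c+1}{mc+1}$, so that $f_0(a_m) = 1 - \frac{c}{(m+1)c+1}$, completing the induction; the limit $a_m \to 1$ is then read off. Parts (2) and (4) are straightforward inductions on $m$ using only that $f_\delta$ is increasing in $x$ and non-increasing in $\delta$: for (2), $a_{m+1,\delta_1} = f_{\delta_1}(a_{m,\delta_1}) \le f_{\delta_1}(a_{m,\delta_0}) \le f_{\delta_0}(a_{m,\delta_0}) = a_{m+1,\delta_0}$; for (4), $b_{m+1} \ge f_\delta(b_m) \ge f_\delta(a_{m,\delta}) = a_{m+1,\delta}$, both arguments lying in $[0,1]$ where $f_\delta$ is increasing. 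Part (3) follows because, for each fixed $m$, the map $\delta \mapsto a_{m,\delta}$ is a finite composition of functions jointly continuous in $(x,\delta)$ on $[0,1]\times[0,1)$, hence continuous on $[0,1)$, so that $\lim_{\delta\to 0^+}a_{m,\delta}=a_{m,0}=a_m$.

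Part (5) is the crux, and the place where a genuine computation is needed. I would first locate the fixed points of $f_\delta$ by solving $x = \frac{1}{2-x(1-\delta)}$, i.e.\ $(1-\delta)x^2 - 2x + 1 = 0$; the key algebraic step is recognizing $1-\delta = (1-\sqrt\delta)(1+\sqrt\delta)$, so the two roots are exactly $x_- = \frac{1}{1+\sqrt\delta}$ and $x_+ = \frac{1}{1-\sqrt\delta}$, with $x_- \le 1 \le x_+$. Writing $f_\delta(x) - x = \frac{(1-\delta)(x-x_-)(x-x_+)}{2-x(1-\delta)}$ shows $f_\delta(x) > x$ precisely on $[0, x_-)$ while $f_\delta(x_-) = x_-$. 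Under the hypothesis $a_0 \le x_-$, the interval $[0, x_-]$ is invariant under $f_\delta$, since by monotonicity $f_\delta([0,x_-]) = [f_\delta(0), x_-] = [\tfrac12, x_-]\subseteq [0,x_-]$ (note $x_->\tfrac12$ as $\delta<1$); hence the sequence stays in $[0, x_-]$ and is non-decreasing by the sign computation. A bounded monotone sequence converges, and continuity of $f_\delta$ forces the limit to be a fixed point lying in $[0, x_-]$; since $x_+ > 1 \ge x_-$, the only such fixed point is $x_-$, giving $\lim_{m} a_{m,\delta} = \frac{1}{1+\sqrt\delta}$.

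The main obstacle is precisely this fixed-point analysis in (5): everything hinges on the clean factorization of the defining quadratic via $1-\delta = (1-\sqrt\delta)(1+\sqrt\delta)$, after which the convergence reduces to the standard monotone-convergence argument for iterating an increasing map below its attracting fixed point. The remaining parts are bookkeeping once the monotonicity and continuity properties of the family $f_\delta$ are recorded at the outset.
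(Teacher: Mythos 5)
Your proof is correct and takes essentially the same route as the paper: parts (1)--(4) by induction on $m$ using monotonicity of the recursion map, and part (5) by showing the iterates stay bounded above by $\frac{1}{1+\sqrt{\delta}}$ and are non-decreasing, then identifying the limit as the fixed point $\frac{1}{1+\sqrt{\delta}}$ --- your factorization $(1-\delta)(x-x_-)(x-x_+)$ is the same quadratic the paper handles in the form $(1-\delta)a_{m,\delta}^2-2a_{m,\delta}+1\geq 0$. One cosmetic slip: $f_\delta$ maps $[0,1]$ into $\left[\frac12,1\right]$, not $\left(\frac12,1\right)$ (e.g.\ $f_\delta(0)=\frac12$ and $f_0(1)=1$), but since $0<a_0<1$ all iterates lie in the open interval $(0,1)$, where the image is indeed contained in $\left(\frac12,1\right)$, so your strict bounds still follow.
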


\begin{proof}
Items (1), (2), (3), and (4) all follow by induction on $m$. For (5), assume that $a_0\leq \frac{1}{1+\sqrt{\delta}}$. Then induction on $m$ shows that $a_{m,\delta}\leq \frac{1}{1+\sqrt{\delta}}$ for all $m$, i.e., $\delta \leq \frac{(1-a_{m,\delta})^2}{a_{m,\delta}^2}$ for all $m$. We have $a_{m,\delta}\leq a_{m+1,\delta}$ if and only if $(1-\delta)a_{m,\delta}^2-2a_{m,\delta}+1\geq 0$, which is seen to hold for all $m$ using that $\delta \leq \frac{(1-a_{m,\delta})^2}{a_{m,\delta}^2}$. The sequence $(a_{m,\delta})_{m\geq 0}$ is monotone nondecreasing and its limit $L_{\delta}$ is bounded above by $\frac{1}{1+\sqrt{\delta}}$ and satisfies $L_{\delta}=\frac{1}{2-L_{\delta}(1-\delta)}$, and hence $L_{\delta} = \frac{1}{1+\sqrt{\delta}}$.
\end{proof}

\begin{lemma}\label{lem:MIFrecurrence}
Let $\Gamma\curvearrowright (X,\mu )$ be a p.m.p.\ action of a MIF group $\Gamma$, and let $g$ be a nonidentity element of $\Gamma$. Given $\epsilon >0$ and a natural number $N$, there exists an element $k\in \Gamma$ with $\mu (kS(g)\triangle S(g))<\epsilon$ such that $[k,g]$ has order at least $N$ and does not commute with $g$.
\end{lemma}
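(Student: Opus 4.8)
The plan is to separate the two requirements: the metric condition $\mu(kS(g)\triangle S(g))<\epsilon$, which I would obtain by a recurrence/boosting argument, and the algebraic conditions on $[k,g]$, which come from the MIF hypothesis via Proposition \ref{prop:mif}. Throughout write $S:=S(g)$ and $s:=\mu(S)$; since $g\neq 1$ (and the ambient action is ergodic) we have $s>0$, and if $s=1$ the condition $\mu(kS\triangle S)<\epsilon$ is automatic for every $k$, so we may assume $0<s<1$. The elementary identity underlying everything is that every power of $g$ stabilises its own support, $g^mS=S$, so the ``perfect stabiliser'' sits at the fixed point $a=1$ of the recursion in Proposition \ref{prop:amdelta}; the whole difficulty is to reach near-stabilisers that are \emph{not} powers of $g$, since those commute with $g$.

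First I would drive the normalised overlap $\mu(kS\cap S)/s$ up towards $1$. Starting from an element $k_0$ produced by Khintchine's Lemma \ref{MR1556883} with $\mu(k_0S\cap S)\geq s^2(1-\delta)$, so that its normalised overlap is at least some $a_{0,\delta}\leq\tfrac{1}{1+\sqrt\delta}$, I would iterate a one-step improvement: given $k_m$ with normalised overlap at least $a_{m,\delta}$, apply the Cauchy--Schwarz recurrence of Lemma \ref{lem:recurrence} to a family of translates of the overlap region $S\cap k_mS$ inside $S$ to produce $k_{m+1}$ with normalised overlap at least $a_{m+1,\delta}=\tfrac{1}{2-a_{m,\delta}(1-\delta)}$, the factor $(1-\delta)$ being the Cauchy--Schwarz slack. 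By Proposition \ref{prop:amdelta}(4),(5) the sequence $a_{m,\delta}$ increases to $\tfrac{1}{1+\sqrt\delta}$, so choosing $\delta$ small enough that $\tfrac{1}{1+\sqrt\delta}>1-\tfrac{\epsilon}{2s}$ (with $a_{0,\delta}\leq\tfrac{1}{1+\sqrt\delta}$ so monotonicity applies) and then $M$ large, I reach an element of normalised overlap exceeding $1-\tfrac{\epsilon}{2s}$, i.e.\ $\mu(kS\triangle S)=2s-2\mu(kS\cap S)<\epsilon$. This yields condition (1).

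For the algebraic conditions I would carry out the MIF selection at the final improvement step, mirroring the argument in the proof of Theorem \ref{main1}(1). The candidate set $T$ of group elements realising the required overlap improvement at the last step is syndetic (this is where ergodicity enters, through the syndetic form of Khintchine's Lemma \ref{MR1556883}); writing $v:=k_{M-1}$, the final element has the form $k=vh$ with $h$ ranging over $T$, and the overlap bound $\mu(vhS\cap S)\geq a_{M,\delta}\,s$ is exactly what the last step provides. The ``bad'' choices of $h$ are those for which $[vh,g]$ has order $<N$ or commutes with $g$, i.e.\ those annihilated by one of the finitely many words $w_0(t):=[[vt,g],g]$ and $w_j(t):=[vt,g]^j$ for $1\leq j<N$ in $\Gamma\ast\mathbb{Z}$, each of which is a nontrivial reduced word since $g\neq 1$ and the free-product structure forces $[vt,g]$ to have infinite order. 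If \emph{every} $h\in T$ were bad, then picking a finite $F\subset\Gamma$ with $\bigcup_{s\in F}Ts^{-1}=\Gamma$ and, for each $q\in\Gamma$, some $s\in F$ with $qs\in T$, every $q$ would be annihilated by one of the finitely many words $w_i(ts)$; Proposition \ref{prop:mif} would then force a nontrivial mixed identity, contradicting that $\Gamma$ is MIF. Hence some $h\in T$ is good, and $k=vh$ satisfies (1), (2) and (3) simultaneously.

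The \textbf{main obstacle} is the coupling of the two arguments: the set of elements whose overlap is close to the full value $s$ is \emph{not} syndetic in general (that would amount to rigidity of the action), so the MIF finite-cover argument cannot be run directly on the metric constraint. The device above circumvents this by performing all but the last overlap improvement unconditionally and invoking MIF only at the final step, where the relevant candidate set---elements realising a single Khintchine-type improvement---\emph{is} syndetic. The remaining technical points I would verify with care are: that one Cauchy--Schwarz step genuinely realises the recursion $a\mapsto\tfrac{1}{2-a(1-\delta)}$ with the stated slack and can be realised over a syndetic set of group elements; that the words $w_j$ stay nontrivial in the presence of torsion in $\Gamma$ (guaranteed by the free-product structure of $\Gamma\ast\mathbb{Z}$); and that prefixing the constant $v=k_{M-1}$ to the variable $h$ neither trivialises these words nor interferes with the overlap estimate, which holds by construction at the last improvement step.
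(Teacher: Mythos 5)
There is a genuine gap, and it sits exactly at the point you flag as the ``main obstacle'': your overlap-improvement step does not exist as described. At the final step you take $k=vh$ with $v=k_{M-1}$ and $h$ ranging over a syndetic set produced by Khintchine's Lemma \ref{MR1556883}, and you claim this yields $\mu(vhS\cap S)\geq a_{M,\delta}\,s$. But Khintchine's lemma, applied to the pair $A=S$, $B=v^{-1}S$, only gives $\mu(vhS\cap S)=\mu(hS\cap v^{-1}S)\geq \mu(S)\mu(v^{-1}S)-\epsilon = s^2-\epsilon$: the product bound. Right-multiplication by a Khintchine-generic $h$ therefore \emph{resets} the normalised overlap to roughly $s=a_0$; it does not advance it from $a_{M-1,\delta}$ to $a_{M,\delta}$. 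Worse, no syndetic set can do better: for a mixing action the set of $h$ with $\mu(vhS\cap S)>s^2+\eta$ is finite for every $\eta>0$, so the set of $h$ realising any genuine improvement is never syndetic. Thus the obstacle you identify is not circumvented by postponing the MIF selection to the last step --- the last step is precisely where it kills the argument. The earlier ``unconditional'' steps suffer from the same problem: Lemma \ref{lem:recurrence} needs either equal-measure sets inside a fixed probability space or a single transformation preserving that space, and group translates $h(S\cap k_mS)$ stay neither inside $S$ nor inside $S\cup k_mS$, so there is no normalised space in which your Cauchy--Schwarz recurrence with group elements as translating maps can be run.

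The paper resolves exactly this point by a different choice of translating maps, and this is the idea your proposal is missing. At stage $m$ one has $k_m$ and $X_m:=S\cup k_mS$; by Proposition \ref{prop:subset} the commutator $[k_m,g]$ has support inside $X_m$, hence \emph{preserves} $X_m$. One may therefore apply Lemma \ref{lem:recurrence} (pure pigeonhole --- no ergodicity, no syndeticity) to the transformation $[k_m,g]$ on $(X_m,\mu_{X_m})$ with the set $S$, obtaining an exponent $1\leq j_m\leq N_1$ with $\mu_{X_m}([k_m,g]^{j_m}S\cap S)>\mu_{X_m}(S)^2(1-\delta)$; the next element is the iterated commutator power $k_{m+1}:=[k_m,g]^{j_m}$, not a product $k_mh$, and $\mu_{X_{m+1}}(S)$ then obeys the recursion of Proposition \ref{prop:amdelta}. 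Since at each of the $N_0$ stages there are only finitely many candidate exponents in $\{1,\dots,N_1\}$, the MIF hypothesis is invoked \emph{once, at the very beginning}, via Proposition \ref{prop:mif}, over the finitely many words $[w_{i_0,\dots,i_{N_0}}(h),g]$ indexed by all exponent tuples, to produce a single $h\in\Gamma$ making all of them nontrivial; this simultaneously makes every step of the construction legitimate and guarantees that the final $k=w_{j_0,\dots,j_{N_0-1}}(h)$ satisfies that $[k,g]$ has order greater than $N$ and does not commute with $g$. Note finally that this argument uses no ergodicity at all, whereas yours assumes it (Khintchine's lemma requires it) even though the lemma is stated for arbitrary p.m.p.\ actions of MIF groups.
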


\begin{proof}
We may assume that $0<\mu (S(g))<1$. Let $a_0 := \mu (S(g))$ and let $a_m$ and $a_{m,\delta}$ be defined as above. Fix a natural number $N_0$ with $a_{N_0}>1 -\epsilon /2$. By parts (3) and (5) of Proposition \ref{prop:amdelta} we can find some $1>\delta >0$ such that $a_0=a_{0,\delta}\leq a_{1,\delta}\leq\cdots \leq a_{N_0,\delta}$ and $a_{N_0,\delta}>1-\epsilon /2$. Let $N_1>N$ be larger than $\frac{1-\mu (S(g))}{\mu (S(g))\delta} + 1$.

For $h\in \Gamma$ and positive integers $i_0,i_1, i_2,\dots $ we define $w_{i_0}(h):= h^{i_0}$, and $$w_{i_0,\dots ,i_{m-1},i_m}(h) := [w_{i_0,\dots , i_{m-1}}(h),g]^{i_m}$$ for each $m\geq 1$. Since $\Gamma$ is MIF, using Proposition \ref{prop:mif}, we may find some $h\in \Gamma$ such that for all $i_0,\dots , i_{N_0}\in \{ 1,\dots , N_1 \}$ the group element
\[
[w_{i_0,\dots , i_{N_0}}(h),g] = [[\cdots [[[h^{i_0},g]^{i_1},g]^{i_2},g]^{i_3}\cdots ,g]^{i_{N_0}},g]
\]
is nontrivial. In particular, given any choice of $i_0,\dots , i_{N_0-1}\in \{ 1,\dots , N_1 \}$, each of the group elements $h,[w_{i_0}(h),g], [w_{i_0,i_1}(h) ,g],\dots , [w_{i_0,\dots , i_{N_0-1}}(h),g]$ has order strictly greater than $N_1$.

For each non-null subset $B$ of $X$ let $\mu _B$ denote the normalized restriction of $\mu$ to $B$.  We will recursively define sets $X_0\supseteq X_1\supseteq \cdots \supseteq X_{N_0}\supseteq S(g)$, and $j_0,j_1,\dots ,j_{N_0 -1}\in \{ 1,\dots ,N_1 \}$ such that $X_m$ contains $S([w_{j_0,\dots , j_{m-1}}(h),g])$, and $$\mu _{X_{m+1}}(S(g)) >\frac{1}{2-\mu _{X_m}(S(g))(1-\delta )}$$ for all $m=0,\dots , N_0-1$.

We define $X_0=X$. By Lemma \ref{lem:recurrence} and our choice of $N_1$ we can find some $1\leq j_0\leq N_1$ such that $\mu _{X_0} (h^{j_0}S(g)\cap S(g)) > \mu _{X_0}(S(g))^2(1-\delta )$. Let $X_1=S(g)\cup h^{j_0}S(g)\subseteq X_0$, so that $\mu _{X_0}(X_1)<2\mu _{X_0}(S(g))-\mu _{X_0}(S(g))^2(1-\delta )$ and
\[
\mu _{X_1}(S(g)) > \frac{\mu (S(g))}{2\mu _{X_0}(S(g))-\mu _{X_0}(S(g))^2(1-\delta )} = \frac{1}{2-\mu _{X_0}(S(g))(1-\delta )} .
\]
The set $X_1$ contains both $S(g)$ and $h^{j_0}S(g)$, hence it contains $S([h^{j_0},g])=S([w_{j_0}(h),g])$.

Since $X_1$ contains $S([w_{j_0}(h),g])$ it is invariant under $[w_{j_0}(h),g]$. Therefore, we may apply Lemma \ref{lem:recurrence} to the transformation $[w_{j_0}(h),g]$ of $(X_1,\mu _{X_1})$ to find some $1\leq j_1\leq N_1$ such that $\mu _{X_1}([w_{j_0}(h),g]^{j_1}S(g)\cap S(g))>\mu _{X_1}(S(g))^2(1-\delta )$. Let $X_2 = S(g)\cup [w_{j_0}(h),g]^{j_1}S(g) \subseteq X_1$, so that $\mu _{X_1} (X_2) <2\mu _{X_2}(S(g))-\mu _{X_2}(S(g))^2(1-\delta )$ and
\begin{align*}
\mu _{X_2}(S(g)) = \frac{\mu _{X_1}(S(g))}{\mu _{X_1}(X_2)}&>\frac{\mu _{X_1}(S(g))}{2\mu _{X_1}(S(g))-\mu _{X_1}(S(g))^2(1-\delta)} \\
&= \frac{1}{2-\mu _{X_1}(S(g))(1-\delta )}.
\end{align*}
The set $X_2$ contains both $S(g)$ and $w_{j_0,j_1}(h)S(g)$ hence it contains $S([w_{j_0,j_1}(h),g])$. 

We continue this process: in general, if $2\leq m<N_0$ and we have already defined $j_0,\dots , j_{m-1}$, and $X_0\supseteq \cdots \supseteq X_{m}\supseteq S(g)$ with $X_{m}\supseteq S([w_{j_0,\dots , j_{m-1}}(h),g])$, we apply Lemma \ref{lem:recurrence} again to find some $1\leq j_m\leq N_1$ such that $$\mu _{X_m}(w_{j_0,\dots ,j_m}(h)S(g)\cap S(g)) > \mu _{X_m}(S(g))^2(1-\delta ).$$ We take $X_{m+1}= S(g)\cup w_{j_0,\dots ,j_m}(h)S(g)\subseteq X_m$ so that $\mu _{X_m}(X_{m+1}) < 2\mu _{X_m}(S(g))- \mu _{X_m}(S(g))^2(1-\delta )$ and
\begin{align*}
\mu _{X_{m+1}}(S(g)) = \frac{\mu _{X_m}(S(g))}{\mu _{X_m}(X_{m+1})} &>\frac{\mu _{X_m}(S(g))}{2\mu _{X_m}(S(g))-\mu _{X_m}(S(g))^2(1-\delta)} \\
&= \frac{1}{2-\mu _{X_m}(S(g))(1-\delta )} .
\end{align*}
Since $X_{m+1}$ contains both $S(g)$ and $w_{j_0,\dots ,j_m}(h)S(g)$, it contains $S([w_{j_0,\dots , j_{m}}(h),g])$.

We let $k:=w_{j_0,j_1,\dots , j_{N_0-1}}(h)$, so that both $k$ and $[k,g]$ have order at least $N_1>N$, and $X_{N_0}=S(g)\cup kS(g)$. It remains to show that our choice of $\delta$ implies that $\mu (kS(g)\triangle S(g))<\epsilon$. By (4) of Proposition \ref{prop:amdelta} we have $\mu _{X_{m}}(S(g))\geq a_{m,\delta}$ for all $0\leq m\leq N_0$, and hence $\mu _{X_{N_0}}(S(g))\geq a_{N_0,\delta}>1-\epsilon /2$. Since $X_{N_0}=S(g)\cup kS(g)$, this means that $\mu (S(g)\cup kS(g))< \mu (S(g)) + \epsilon /2$ and hence $\mu (S(g)\triangle kS(g))<\epsilon$.\qedhere
\end{proof}

\begin{proof}[Proof of Theorem \ref{main2}]
If $t=1$ then there is nothing to prove, so we may assume that $t<1$. Fix $1\geq \epsilon _0>0$, and find $0< \epsilon <\epsilon _0$ so small that $t(1-2\epsilon ) - t^2(1+\epsilon )^3 >\epsilon /\epsilon _0$.

By Lemma \ref{lem:recurrence} there exists an integer $N >0$ such that if $T$ is a measure preserving transformation on a probability space $(Y,\nu )$ and $C$ is a measurable subset of $Y$ with $\nu (C)\geq 1/6$ then there is some $1\leq i < N$ such that $\nu (T^iC\cap C)>\nu (C)^2(1-\epsilon ^2)$.

Let $g_0\in \Gamma\setminus \{ 1 \}$ be such that $t\leq \mu (S(g_0))<t(1 + \epsilon )$. Then, by Lemma \ref{lem:MIFrecurrence} we can find an element $g\in G$ of order greater than $N$ with $\mu (S(g))<t(1+\epsilon )$. By ergodicity of the action $\Gamma \curvearrowright (X,\mu )$, there is a syndetic subset $D$ of $\Gamma$ such that for all $k\in D$ we have $\mu (S(g)\cap kS(g)) < \mu (S(g))^2(1+\epsilon )<t^2(1+\epsilon )^3$. For a non-null $B\subseteq X$ let $\mu _B$ denote the normalized restriction of $\mu$ to $B$.
\begin{claim}\label{claim:MIFnoncommute}
There exists some $k\in D$ such that for all $1\leq i,j<N$ the group elements $g^i$ and $kg^jk^{-1}$ do not commute. Moreover, for every such $k$ we have both
\begin{align*}
\mu _{S(g)}(S(g)\cap S(kgk^{-1})) &\geq 1/6 \\
 \text{ and } \ \mu _{S(kgk^{-1})}(S(g)\cap S(kgk^{-1}))&\geq 1/6.
\end{align*}
\end{claim}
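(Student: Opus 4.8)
The plan is to treat the two assertions of the claim in turn, after a preliminary simplification. Since $S(kgk^{-1})=kS(g)$, we have $\mu(S(kgk^{-1}))=\mu(S(g))$, so both displayed left-hand sides are equal to $\mu(S(g)\cap kS(g))/\mu(S(g))$. Hence it suffices to exhibit one $k\in D$ with the non-commutation property and to show that \emph{every} such $k$ satisfies $\mu(S(g)\cap kS(g))\ge \mu(S(g))/6$. I will use throughout that $g\in\Gamma$ (it arises as a product of elements of $\Gamma$ via Lemma \ref{lem:MIFrecurrence}), so that all the commutators below lie in $\Gamma$.

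For the existence of $k$, I would argue by contradiction from the MIF hypothesis. For $1\le i,j<N$ set $w_{i,j}(t):=[g^i,tg^jt^{-1}]\in\Gamma\ast\mathbb Z$. Because $g$ has order greater than $N$, the elements $g^i$ and $g^j$ are non-trivial in $\Gamma$, and the expression $g^i\,t\,g^j\,t^{-1}\,g^{-i}\,t\,g^{-j}\,t^{-1}$ is a reduced, hence non-trivial, word in the free product. If no $k\in D$ had the desired property, then for each $k\in D$ some $w_{i,j}(k)=[g^i,kg^jk^{-1}]$ would be trivial. As $D$ is syndetic there is a finite $S\subseteq\Gamma$ with $DS=\Gamma$, so every $q\in\Gamma$ equals $ks$ with $k=qs^{-1}\in D$ and $s\in S$. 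Substituting $t\mapsto ts^{-1}$ produces the finite family of non-trivial words $u_{i,j,s}(t):=[g^i,ts^{-1}g^jst^{-1}]$, and our assumption says that for every $q\in\Gamma$ at least one $u_{i,j,s}(q)$ is trivial. By Proposition \ref{prop:mif} this forces a non-trivial mixed identity on $\Gamma$, contradicting MIF. Hence a suitable $k\in D$ exists.

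For the measure bound, fix any $k\in D$ with the non-commutation property; in particular $[g,kgk^{-1}]\ne 1$ (the case $i=j=1$). Since $g,kgk^{-1}\in\Gamma$ and $\Gamma$ is $t$-discrete, every non-trivial element of $\Gamma$ has support of measure at least $t=\delta(\Gamma)$, so $\mu(S([g,kgk^{-1}]))\ge t$. On the other hand, Proposition \ref{prop:subset} applied to $g$ and $kgk^{-1}$ gives $\mu(S([g,kgk^{-1}]))\le 3\mu(S(g)\cap S(kgk^{-1}))=3\mu(S(g)\cap kS(g))$. Combining the two yields $\mu(S(g)\cap kS(g))\ge t/3$, and dividing by $\mu(S(g))<t(1+\epsilon)$ gives $\mu(S(g)\cap kS(g))/\mu(S(g))>\tfrac{t/3}{t(1+\epsilon)}=\tfrac{1}{3(1+\epsilon)}\ge \tfrac16$, where the last step uses $\epsilon<\epsilon_0\le 1$. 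This establishes both displayed inequalities.

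The hard part is the existence step, which is where the MIF hypothesis is genuinely used. The delicate point is that the non-commutation can fail a priori only on the syndetic set $D$ rather than on all of $\Gamma$; syndeticity is exactly what lets one pass from ``$D$ kills one of finitely many words'' to ``$\Gamma$ kills one of finitely many words'' and thus apply Proposition \ref{prop:mif}. One must also confirm that the twisted words $u_{i,j,s}$ remain non-trivial in $\Gamma\ast\mathbb Z$, which is immediate from reducedness once $g^i,g^j\neq 1$. By contrast, the measure bound is then a short consequence of $t$-discreteness together with the support estimate of Proposition \ref{prop:subset}.
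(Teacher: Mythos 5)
Your proof is correct and follows essentially the same route as the paper's: the existence of $k$ is obtained by contradiction, using syndeticity of $D$ to produce finitely many non-trivial words $[g^i,ts^{-1}g^jst^{-1}]$ (non-trivial because $g$ has order greater than $N$) and invoking Proposition \ref{prop:mif} against the MIF hypothesis, while the bound $1/6$ follows from $t\leq \mu(S([g,kgk^{-1}]))\leq 3\mu(S(g)\cap kS(g))$ via Proposition \ref{prop:subset} and $\mu(S(g))<t(1+\epsilon)$ with $\epsilon\le 1$. Your added details (the reducedness check in $\Gamma\ast\mathbb Z$ and the observation that $g\in\Gamma$ so that $t$-discreteness applies to $[g,kgk^{-1}]$) are points the paper leaves implicit, but the argument is the same.
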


\begin{proof}[Proof of Claim \ref{claim:MIFnoncommute}] Fix a finite subset $F_D\subseteq G$ such that $DF_D=\Gamma$. Suppose no such $k$ exists as in the first statement in the claim. Then for every $h\in \Gamma$ there exists some $s\in F_D$ and some $1\leq i,j\leq N$ such that $h$ satisfies the nontrivial mixed identity $[g^i,hs^{-1}g^jsh^{-1}] =e$; the mixed identity is nontrivial since $g$ has order greater than $N$. Since there are only finitely many such triples $(s,i,j)$, Proposition \ref{prop:mif} shows that $\Gamma$ satisfies a nontrivial mixed identity, a contradiction.

For the moreover statement, given such a $k$, Proposition \ref{prop:subset} applied to the non-commuting elements $g$ and $kgk^{-1}$ shows that $$t\leq \mu (S([g,kgk^{-1}]))\leq 3\mu (S(g)\cap S(kgk^{-1})),$$ and hence $\frac{\mu (S(g)\cap S(kgk^{-1}))}{\mu (S(g))}\geq\frac{t}{3t(1+\epsilon )} \geq 1/6$. Since $S(kgk^{-1})=kS(g)$ we likewise have $$\frac{\mu (S(g)\cap S(kgk^{-1}))}{\mu (S(kgk^{-1}))}\geq 1/6.$$ This finishes the proof of Claim \ref{claim:MIFnoncommute}.
\end{proof}

Fix now $k\in D$ as in Claim \ref{claim:MIFnoncommute} and let $h=kgk^{-1}$. Since $S(g)$ is invariant under the cyclic group $\langle g\rangle$, by applying our choice of $N$ to the action $\langle g\rangle \curvearrowright (S(g), \mu _{S(g)})$ and the subset $S(g)\cap S(h)$ of $S(g)$, we obtain some $1\leq i<N$ such that $\mu _{S(g)}(g^i (S(g)\cap S(h))\cap S(g)\cap S(h)) \geq \mu _{S(g)}(S(g)\cap S(h))^2 (1- \epsilon ^2)$, i.e.,
\[
\mu (g^i (S(g)\cap S(h))\cap S(g)\cap S(h)) \geq \frac{\mu (S(g)\cap S(h))^2}{\mu (S(g))} (1- \epsilon ^2) \geq \frac{\mu (S(g)\cap S(h))^2}{t} (1-\epsilon ) .
\]
Likewise, applying our choice of $N$ to the action $\langle h\rangle \curvearrowright (S(h),\mu _{S(h)})$ and the subset $S(g)\cap S(h)$, we obtain some $1\leq j< N$ such that
\[
\mu (h^j (S(g)\cap S(h))\cap S(g)\cap S(h)) \geq \frac{\mu (S(g)\cap S(h))^2}{t} (1-\epsilon ) .
\]
Since $S(g^i)\subseteq S(g)$ and $t\leq \mu (S(g^i)) \leq \mu (S(g))\leq t(1+\epsilon )$, we have $\mu (S(g^i) \triangle S(g))< \epsilon t$. Likewise, $S(h^j)\subseteq S(h)$ and $\mu (S(h^j)\triangle S(h))<\epsilon t$. Therefore
\begin{align}
\label{eqn:gn} \mu (g^i (S(g^i)\cap S(h^j))\cap S(g^i)\cap S(h^j)) &\geq \frac{\mu (S(g^i)\cap S(h^j))^2}{t} (1-\epsilon ) - 4\epsilon t \\
\label{eqn:hm} \mu (h^j (S(g^i)\cap S(h^j))\cap S(g^i)\cap S(h^j)) &\geq \frac{\mu (S(g^i)\cap S(h^j))^2}{t} (1-\epsilon ) - 4\epsilon t
\end{align}
Let $A=S(g^i)\cap S(h^j)$. Observe that $\mu (A)\leq \mu (S(g)\cap S(h)) + 2\epsilon t < t^2(1+\epsilon )^3 + 2\epsilon t$, so by our choice of $\epsilon$ we have
\begin{equation}\label{eqn:tA}
\frac{\epsilon}{\epsilon _0} < t- \mu (A) .
\end{equation}
Our choice of $k$ ensures that $g^i$ and $h^j$ do not commute. Therefore, applying Proposition \ref{prop:subset} to $[g^i,h^j]$ and using \eqref{eqn:gn} and \eqref{eqn:hm}, we obtain
\begin{align*}
t \leq \mu (S([g^i,h^j]))  &\leq 3\mu (A) - 2\frac{\mu (A)^2}{t} (1-\epsilon ) + 8\epsilon t \\
&=3\mu (A) - 2\frac{\mu (A)^2}{t} +  2\frac{\mu (A)^2}{t}\epsilon + 8\epsilon t .
\end{align*}
Multiplying this inequality by $t$ (which by assumption is strictly positive) and rearranging gives $t^2 - 3\mu (A)t + 2\mu (A)^2 \leq 2\mu (A)^2\epsilon + 8\epsilon t^2 \leq 10\epsilon$ and hence
\begin{align*}
(t-2\mu (A)) (t-\mu (A)) & \leq 10\epsilon .
\end{align*}
If $t-2\mu (A)> 0$, then multiplying \eqref{eqn:tA} by $t-2\mu (A)$ shows that $(t-2\mu (A))\frac{\epsilon}{\epsilon _0} < 10 \epsilon$, and hence
\[
t\leq 2\mu (A) + 10\epsilon _0 \leq 2t^2(1+\epsilon )^3 + 2\epsilon t + 10 \epsilon _0 .
\]
If $t- 2\mu (A)\leq 0$, then this last inequality holds trivially. In either case, this last displayed inequality holds, so since $\epsilon _0>0$ was arbitrary, and since $\epsilon \rightarrow 0$ as $\epsilon _0\rightarrow 0$, it follows that $t\leq 2t^2$, and therefore $t\geq 1/2$.
\end{proof}

\begin{remark} \label{rem:example2}
Here is a first example, which shows that the assumption that $\Gamma$ is MIF is necessary in Theorem \ref{main2}. 
Given any infinite group $H$, consider a $\Z /2\Z$-vector space $V$ equipped with a basis $(\delta _h)_{h\in H}$ that is in bijection with $H$. 
The left translation action of $H$ permutes this basis, inducing an action by automorphisms on $V$, and we identify $V$ and $H$ naturally with subgroups of the associated semidirect product $V\rtimes  H$ (which is isomorphic to the restricted regular wreath product $(\Z /2\Z )\wr H$). 
Independently assign to each basis element a uniformly distributed label in $[0,1]$, and for $t\in [0,1]$ let $V_t$ be the (random) subspace of $V$ generated by those basis elements with label at most $t$. 
Then $V_t$ is an ergodic invariant random subgroup of $V\rtimes H$, hence by \cite{MR3165420}, $V_t$ is the stabilizer distribution of some ergodic p.m.p.\ action of $V\rtimes H$. 
Under this action, fixed point sets of group elements not lying in $V$ have measure zero, and the measure of the fixed point set of a vector $v\in V$ of the form $v=\sum 
 _{h\in Q}\delta  _h$ is exactly the probability that $v$ belongs to $V_t$, which is $t^{|Q|}$. Thus, when $t>1/2$ this gives an ergodic action of $V\rtimes  H$ for which $V\rtimes H$ is discrete, but not $1/2$-discrete.
\end{remark}

\section{Discrete groups and compact actions}

A p.m.p.\ action $\Gamma\curvearrowright (X,\mu )$ is {\it compact} if the image of $\Gamma$ in $\mathrm{Aut}(X,\mu )$ is precompact in the weak topology, i.e., the usual Polish group topology on $\mathrm{Aut}(X,\mu )$.

\subsection{Profinite actions} \label{profinite}

Let $\Gamma$ be a discrete group and let $(\Gamma_n)_n$ be a descending sequence of normal subgroups with $\Gamma=\Gamma_0$ and $\bigcap_n \Gamma_n = \{e\}$. We consider the corresponding profinite completion $\widehat \Gamma$ with its Haar measure $\mu_H$. We denote the closure of $\Gamma_n$ in $\widehat \Gamma$ by $\widehat \Gamma_n$.

Let $k:= [\Gamma:\Gamma_n]$ and let $g_1,\dots,g_k$ be a set of representatives of $\Gamma_n$-cosets, i.e. $\Gamma = \bigsqcup_{i=1}^k g_i\Gamma_n$. It follows that
\[
\widehat \Gamma = \bigsqcup_{i=1}^k  g_i \widehat \Gamma_n = [k] \times \widehat\Gamma_n.
\]
Let $G:=[\Gamma \curvearrowright (\widehat{\Gamma},\mu_H)]$, and let $T\in G$ be compatible with this decomposition, that is, $T$ acts by the left multiplication with $h_i \in \Gamma$ on $g_i\widehat \Gamma$. Then $T$ is identified with the self-map of
$[k] \times \widehat\Gamma_n$ that sends the $i$-th copy of $\widehat \Gamma_n$ to the $j$-th copy of $\widehat \Gamma_n$ by left-multiplication with $\gamma \in \Gamma_n$ for the unique $j \in [k]$, $\gamma \in \Gamma_n$ with $h_ig_i = g_j \gamma$.


As this is in particular true for $T\in\Gamma$, we obtain a chain of inclusions
\[
 \Gamma \leq \Gamma_n^\vee:= (\Gamma_n)^k \rtimes {\rm Sym}(k) \leq G=[\Gamma \curvearrowright (\widehat{\Gamma},\mu_H)],
\]
where the group in the middle is the permutational wreath product. In particular, $\Gamma$ is contained in a discrete subgroup which contains elements whose support has measure $1/k$.

The sequence of wreath products $(\Gamma_n^\vee)_n$ is an increasing sequence of subgroups of $G$ containing $\Gamma$, whose union is dense in $G$: indeed, for $T\in G$ and $\eps > 0$ arbitrary we can find $n$ large enough and $T'\in G$ such that $d(T,T')<\eps$ and $T'$ decomposes over
$\widehat \Gamma = \bigsqcup_{i=1}^k g_i \widehat\Gamma_n$. By the analysis above $T'$ belongs to $(\Gamma_n)^k \rtimes {\rm Sym}(k)$. Let's sumarize the result in the following proposition:

\begin{proposition} \label{dense}
Let $\Gamma$ be a countable residually finite group and consider the action on its profinite completion $\Gamma \curvearrowright (\widehat{\Gamma},\mu_H)$. There exists an increasing chain of discrete subgroups $\Gamma:= \Gamma_0 \leq \Gamma_1 \leq \cdots \leq G:= [\Gamma \curvearrowright (\widehat{\Gamma},\mu_H)]$, whose union is dense.
\end{proposition}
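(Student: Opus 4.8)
The plan is to use the chain $\Gamma = \Gamma_0^\vee \le \Gamma_1^\vee \le \cdots$ of permutational wreath products $\Gamma_n^\vee = (\Gamma_n)^k \rtimes {\rm Sym}(k)$ constructed above, where $k = [\Gamma:\Gamma_n]$ and the inclusion into $G$ is the identification of $\Gamma_n^\vee$ with those $T \in G$ that are compatible with the partition $\mathcal P_n := \{g_i\widehat\Gamma_n\}_{i=1}^k$ of $\widehat\Gamma$ into $\widehat\Gamma_n$-cosets. Three things must be checked: that the chain is increasing and contains $\Gamma$, that each $\Gamma_n^\vee$ is discrete, and that $\bigcup_n \Gamma_n^\vee$ is dense.

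For the first point, since the normal subgroups descend, $\Gamma_{n+1} \le \Gamma_n$, the partition $\mathcal P_{n+1}$ refines $\mathcal P_n$. Any $T \in \Gamma_n^\vee$ acts on each atom $g_i\widehat\Gamma_n$ by left multiplication by some $h_i \in \Gamma$, and left multiplication by an element of $\Gamma$ carries $\widehat\Gamma_{n+1}$-cosets to $\widehat\Gamma_{n+1}$-cosets; hence $T$ is also compatible with $\mathcal P_{n+1}$, so $\Gamma_n^\vee \le \Gamma_{n+1}^\vee$. The same observation, applied to the trivial partition refined by $\mathcal P_n$, shows $\Gamma = \Gamma_0^\vee \le \Gamma_n^\vee$ for every $n$. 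For discreteness, a nontrivial $T \in \Gamma_n^\vee$ either moves some atom $g_i\widehat\Gamma_n$ to a different atom, or fixes every atom setwise but acts on some atom by left multiplication by a nonidentity $\gamma_i \in \Gamma_n$. In the first case $T$ has no fixed point on $g_i\widehat\Gamma_n$; in the second, left multiplication by $\gamma_i \ne e$ is a free transformation of the group $\widehat\Gamma_n$, so again $T$ has no fixed point on that atom. In either case $\mu_H(S(T)) \ge \mu_H(g_i\widehat\Gamma_n) = 1/k$, whence $\delta(\Gamma_n^\vee) \ge 1/[\Gamma:\Gamma_n] > 0$ and $\Gamma_n^\vee$ is discrete.

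The main work, and the main obstacle, is density. Writing an arbitrary $T \in G$ as $T(x) = \gamma(x)x$ for a measurable $\gamma \colon \widehat\Gamma \to \Gamma$, I would decompose $\widehat\Gamma = \bigsqcup_j A_j$ into the countably many level sets $A_j = \gamma^{-1}(s_j)$. Since the partitions $\mathcal P_n$ refine and generate the Borel $\sigma$-algebra, as $\bigcap_n \widehat\Gamma_n = \{e\}$, martingale convergence (a standard density argument) shows that for any $\eta > 0$ and all large $n$, all but a set of atoms of total measure $<\eta$ are $(1-\eta)$-dominated by a single level set: each such good atom $P_i$ satisfies $\mu_H(A_{j(i)}\cap P_i) > (1-\eta)\mu_H(P_i)$ for some index $j(i)$. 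On a good atom $T$ then agrees with left multiplication by $s_{j(i)}$ off a set of relative measure $\eta$, and $s_{j(i)}$ carries $P_i$ onto another atom $P_{\phi(i)}$.

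The delicate step is that the induced atom map $\phi$ on good atoms need not \emph{a priori} be a bijection, so it cannot directly serve as the permutation part of an element of $\Gamma_n^\vee$. Here I would invoke that $T$ is measure preserving: if two distinct good atoms had the same target $P_m$, then $T$ would carry more than a $(1-\eta)$-fraction of each into $P_m$, and for $\eta<1/2$ these disjoint images would overflow $P_m$, contradicting injectivity of $T$. Hence $\phi$ is injective on good atoms and extends to a permutation $\bar\phi \in {\rm Sym}(k)$. Defining $T' \in \Gamma_n^\vee$ to act as left multiplication by $s_{j(i)}$ on each good atom and arbitrarily, consistently with $\bar\phi$, on the remaining atoms, one gets $T = T'$ except on the bad atoms and on the $\eta$-fraction of each good atom where $\gamma \ne s_{j(i)}$, so $d(T, T') < 2\eta$, which is below any prescribed $\eps$. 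Thus $T$ lies in $\overline{\bigcup_n \Gamma_n^\vee}$, and density follows. I expect the correction step turning the approximate atom map into an honest permutation, while controlling the measure lost, to be the genuine crux of the argument.
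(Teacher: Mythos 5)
Your proposal is correct and takes essentially the same route as the paper: the same increasing chain of permutational wreath products $\Gamma_n^\vee = (\Gamma_n)^k \rtimes {\rm Sym}(k)$ identified with the elements of $G$ compatible with the coset partition, the same $1/k$ lower bound on supports giving discreteness, and density by approximating an arbitrary $T\in G$ by an element that decomposes over $\bigsqcup_i g_i\widehat\Gamma_n$ for $n$ large. The only difference is that you spell out the approximation step (martingale convergence over the refining partitions, plus the injectivity argument turning the induced atom map into an honest permutation), which the paper asserts in a single sentence without proof.
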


\begin{remark} \label{jacobs}
Let us now come back to the example of Jacobson mentioned already in the introduction. Jacobson showed that there exists a group $\Gamma$, which is elementary amenable and MIF, see \cite{MR4193626}. It was shown in \cite{MR2507115} that this group is also residually finite. Thus, we obtain that the full group $G$ of the unique hyperfinite equivalence relation contains two $1$-discrete copies of $\Gamma$, one is contained in a maximal discrete subgroup (by Theorem \ref{main1} applied to the Bernoulli action) and the other contained in an infinite chain of discrete overgroups, whose union is dense (by Proposition \ref{dense} applied to the action on the profinite completion).
\end{remark}

\subsection{Compact actions}

This section contains the proof of Theorem \ref{thm:compact}. By assumption, the infimum
\[
t:= \inf \{ \mu (S(g)) : g\in \Gamma\setminus \{ e \}  \}
\]
is strictly greater than $0$. Our goal is to show that $t=1$.

In what follows we will identify $\Gamma$ with its image in $\mathrm{Aut}(X,\mu )$. Let $K$ denote the closure of $\Gamma$ in $\mathrm{Aut}(X,\mu )$, which is compact by assumption.  We begin with the following claim.

\begin{claim}\label{claim:largeorder} For every $g_0\in \Gamma \setminus \{ 1 \}$, $\epsilon >0$, and natural number $n$, there exists some $g_1\in \Gamma$ of order greater than $n$ such that $\mu (S(g_1)\setminus S(g_0))<\epsilon$.
\end{claim}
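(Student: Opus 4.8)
The plan is to argue by contradiction and to feed a finite family of words into Proposition \ref{prop:mif}, exactly as in the proof of Claim \ref{claim:MIFnoncommute}. For $g\in \Gamma$ write $\rho (g):=\mu (S(g)\setminus S(g_0))$; since $S(gh)\subseteq S(g)\cup S(h)$ this function is subadditive and conjugation does not concern us here. Set $N:=\mathrm{lcm}(1,\dots ,n)$. If the claim fails for the given $g_0,\epsilon ,n$, then every $g\in \Gamma$ of order greater than $n$ has $\rho (g)\geq \epsilon$; equivalently, every $g\in \Gamma$ with $\rho (g)<\epsilon$ has order at most $n$ and hence satisfies $g^N=e$. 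My goal is to manufacture, for each $h\in \Gamma$, some $\Gamma$-word in $h$ whose value has $\rho <\epsilon$ and is therefore killed by the $N$-th power, yielding a nontrivial mixed identity.

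The first key step uses compactness to locate enough elements that nearly preserve $S(g_0)$ setwise. Let $K$ be the (compact) closure of $\Gamma$ in $\mathrm{Aut}(X,\mu )$. The map $a\mapsto \mu (aS(g_0)\setminus S(g_0))$ is continuous on $K$ for the weak topology, so $V:=\{a\in K:\mu (aS(g_0)\setminus S(g_0))<\epsilon \}$ is an open neighbourhood of the identity. The $\Gamma$-translates $\{\gamma V:\gamma \in \Gamma \}$ form an open cover of the compact group $K$ (because $\Gamma$ is dense), so there is a finite set $\{\gamma _1,\dots ,\gamma _m\}\subseteq \Gamma$ with $K=\bigcup _i\gamma _iV$. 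Intersecting with $\Gamma$ shows that $D:=\{\gamma \in \Gamma :\mu (\gamma S(g_0)\setminus S(g_0))<\epsilon \}$ is syndetic, namely $\Gamma =\bigcup _i\gamma _iD$. This is the only place the compactness hypothesis enters, and I expect it to be the main obstacle: without compactness there is no reason for the near-stabiliser $D$ of $S(g_0)$ to be syndetic.

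For $d\in D$ consider $v_d:=[g_0,dg_0d^{-1}]$. By Proposition \ref{prop:subset}, with $A=S(g_0)\cap dS(g_0)\subseteq S(g_0)$, the support of $v_d$ lies in $A\cup g_0A\cup (dg_0d^{-1})A$; since $g_0A\subseteq S(g_0)$ and $(dg_0d^{-1})A\subseteq dS(g_0)$, I get $S(v_d)\setminus S(g_0)\subseteq dS(g_0)\setminus S(g_0)$, so $\rho (v_d)<\epsilon$ and hence $v_d^N=e$ by the contradiction hypothesis. Writing an arbitrary $h\in \Gamma$ as $h=\gamma _id$ with $d=\gamma _i^{-1}h\in D$, this reads $[g_0,\gamma _i^{-1}hg_0h^{-1}\gamma _i]^N=e$. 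Thus the finitely many words $w_i(t):=[g_0,\gamma _i^{-1}tg_0t^{-1}\gamma _i]^N\in \Gamma \ast \mathbb Z$ have the property that for every $h\in \Gamma$ at least one $w_i(h)$ is trivial.

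It remains to check that each $w_i$ is a nontrivial element of $\Gamma \ast \mathbb Z$, which is where I must be slightly careful. In the free product the element $u:=\gamma _i^{-1}tg_0t^{-1}\gamma _i$ involves the free letter $t$ and so does not lie in the factor $\Gamma$; since the centraliser of a nontrivial element $g_0$ of the factor $\Gamma$ is contained in $\Gamma$, the commutator $[g_0,u]$ is nontrivial, is cyclically reduced of positive $t$-length, and therefore has infinite order, so $w_i(t)=[g_0,u]^N\neq e$. Proposition \ref{prop:mif} then produces a genuine nontrivial mixed identity for $\Gamma$, contradicting the assumption that $\Gamma$ is MIF. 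This contradiction establishes the existence of the desired $g_1\in \Gamma$ of order greater than $n$ with $\mu (S(g_1)\setminus S(g_0))<\epsilon$.
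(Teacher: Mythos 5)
Your proof is correct and follows essentially the same route as the paper: compactness of the closure $K$ gives a finite cover $K=\bigcup_i \gamma_i V$ by translates of a small identity neighbourhood $V$, commutators of $g_0$ with elements of $V\cap\Gamma$ (resp.\ their conjugates) have support almost contained in $S(g_0)$, and Proposition \ref{prop:mif} converts the failure of the claim into a nontrivial mixed identity, contradicting MIF. The only differences are cosmetic --- the paper uses the words $[ts^{-1},g_0]^i$ for $s\in F$, $1\le i\le n$, where you use the $\mathrm{lcm}$ trick, and note that your word $[g_0,u]$ need not itself be cyclically reduced (if $\gamma_i\neq e$ it begins and ends with $\Gamma$-syllables), but it is conjugate, via $\gamma_i$, to a cyclically reduced word of positive $t$-length, which still yields infinite order and hence $w_i\neq e$.
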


\begin{proof}[Proof of Claim \ref{claim:largeorder}]
Let $V$ be an open identity neighborhood in $K$ satisfying $$\mu (kS(g_0)\triangle S(g_0))<\epsilon $$ for all $k\in V$. Then there must be some $k\in V\cap \Gamma$ such that $[k,g_0]$ has order greater than $n$. For suppose otherwise. Since $K$ is compact and $\Gamma$ is dense in $K$ there is some finite $F\subseteq \Gamma$ such that $VF=K$. Then for every $h\in \Gamma$ there is some $s\in F$ with $hs^{-1}\in V\cap \Gamma$ and hence some $1\leq i \leq n$ such that $[hs^{-1},g_0]^i = e$, so $\Gamma$ satisfies a nontrivial mixed identity by Proposition \ref{prop:mif}, a contradiction.

Let $k\in V\cap \Gamma$ be such that $g_1:= [k,g_0]$ has order greater than $n$. Then $S(g_1)$ is contained in $kS(g_0)\cup S(g_0)$, hence $\mu (S(g_1)\setminus S(g_0))\leq \mu (kS(g_0)\setminus S(g_0))<\epsilon$. This finishes the proof of Claim \ref{claim:largeorder}.
\end{proof}

Fix $1\geq \epsilon >0$, and let $g_0\in \Gamma\setminus \{ 1 \}$ be such that $\mu (S(g_0))<t + \epsilon$. Since $K$ is compact we can find a $K$-conjugation-invariant identity neighborhood $U$ in $K$ such that $\mu (kS(g_0)\triangle S(g_0))<\epsilon$ for all $k\in U$. By compactness again there is a natural number $n\geq 1$ such that for every $k\in K$ there is some $1\leq i< n$ with $k^i\in U$. By Claim \ref{claim:largeorder} there exists some $g_1\in \Gamma$ of order greater than $n$ such that $\mu (S(g_1)\setminus S(g_0))<\epsilon$. Let $1\leq i< n$ be such that $g_1^i \in U$ and let $g:=g_1^i$. We have $S(g)\subseteq S(g_1)$, and since $g\neq e$ we have $t\leq \mu (S(g))$, and therefore $\mu (S(g)\triangle S(g_0))< 2\epsilon$. It follows that for all $k\in U$ we have $\mu (kS(g)\triangle S(g))<5\epsilon$.

By ergodicity there exists a syndetic subset $D$ of $\Gamma$ such that for all $k\in D$ we have
\[
\mu (S(g)\cap kS(g)) < \mu (S(g))^2+\epsilon < t^2 + 9\epsilon .
\]
Then, arguing as in Claim \ref{claim:MIFnoncommute}, there exists some $k\in D$ such that $g$ and $kgk^{-1}$ do not commute. Let $h := kgk^{-1}$. Then $S(h)=kS(g)$, so $\mu (S(h))=\mu (S(g))$ and $\mu (S(g)\cap S(h) ) < t^2 +9\epsilon$. Since $U$ is conjugation invariant and $g$ belongs to $U$, both of the conjugates $kgk^{-1}$ and $k^{-1}gk$ belong to $U$ as well, hence
\begin{align*}
\mu (hS(g)\triangle S(g)) &= \mu (kgk^{-1}S(g)\triangle S(g) )<5\epsilon , \\
\text{and } \ \mu (gS(h)\triangle S(h)) &= \mu (k^{-1}gkS(g)\triangle S(g))<5\epsilon .
\end{align*}
Letting $A:=S(g)\cap S(h)$, it follows that $\mu (g A \cap A ) \geq \mu (A) - 5\epsilon$ and $\mu (hA\cap A)\geq \mu (A)-5\epsilon$. Since $g$ and $h$ do not commute we have $t\leq \mu (S([g,h]))$, so applying Proposition \ref{prop:subset} we obtain
\begin{align*}
t\leq \mu (S([g,h])) &\leq 3\mu (A) - \mu (gA\cap A)-\mu (hA\cap A) \\
&\leq \mu (A) + 10\epsilon \\
&\leq  t^2 + 19\epsilon .
\end{align*}
Since $\epsilon >0$ was arbitrary this shows that $t\leq t^2$, hence $t=1$. This finishes the proof of Theorem \ref{thm:compact}.

\begin{corollary}
Let $\Gamma=\Gamma_0\geq \Gamma_1\geq \Gamma_2\cdots$ be a chain of finite index subgroups of a {\rm MIF} group $\Gamma$. For $g\in \Gamma$ let
\[
t_g := \lim _{n\rightarrow\infty} \frac{ | \{ x\in \Gamma/\Gamma_n \, : \, gx = x \} |}{[\Gamma:\Gamma_n]} .
\]
Suppose that there is some nonidentity element $h\in \Gamma$ for which $t_h >0$. Then for any $\epsilon >0$ there exists some nonidentity element $g\in \Gamma$ such that $t_g>1-\epsilon$.
\end{corollary}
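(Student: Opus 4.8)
The plan is to realize the numbers $t_g$ as fixed-point measures of a single compact, ergodic, measure preserving action, and then read the corollary off Theorem~\ref{thm:compact}. First I would build that action. Let $X := \varprojlim_n \Gamma/\Gamma_n$ be the inverse limit of the finite coset spaces along the bonding maps $x\Gamma_{n+1}\mapsto x\Gamma_n$ (well defined since $\Gamma_{n+1}\leq \Gamma_n$), let $\pi_n\colon X\to \Gamma/\Gamma_n$ be the canonical projections, and let $\Gamma$ act on $X$ by left translation. Equipping $X$ with the inverse limit $\mu$ of the normalized counting measures on the $\Gamma/\Gamma_n$ (compatible because the bonding maps are $[\Gamma_n:\Gamma_{n+1}]$-to-one) yields a $\Gamma$-invariant Borel probability measure with $\mu(\pi_n^{-1}(B)) = |B|/[\Gamma:\Gamma_n]$ for $B\subseteq \Gamma/\Gamma_n$. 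This action is compact: at level $n$ it factors through the finite group $Q_n:=\mathrm{image}(\Gamma\to \mathrm{Sym}(\Gamma/\Gamma_n))$, so the closure of $\Gamma$ in $\mathrm{Aut}(X,\mu)$ is carried homeomorphically onto the profinite group $\varprojlim_n Q_n$ and is therefore compact.

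Next I would identify $t_g$ with a fixed-point measure. For $x=(x_n\Gamma_n)_n\in X$ one has $gx=x$ iff $x_n^{-1}gx_n\in \Gamma_n$ for every $n$, so writing $F_n(g):=\{x\Gamma_n : x^{-1}gx\in \Gamma_n\}$ we get $F(g)=\bigcap_n \pi_n^{-1}(F_n(g))$. Since $\Gamma_{n+1}\leq \Gamma_n$ the sets $\pi_n^{-1}(F_n(g))$ are nested decreasing, so continuity of measure gives
\[
t_g = \lim_n \frac{|F_n(g)|}{[\Gamma:\Gamma_n]} = \lim_n \mu(\pi_n^{-1}(F_n(g))) = \mu(F(g)) = 1-\mu(S(g)),
\]
which in particular shows the limit defining $t_g$ always exists. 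I would then check ergodicity: if $A\subseteq X$ is $\Gamma$-invariant, the conditional expectation $\mathbb{E}[1_A\mid \pi_n]$ descends to a $\Gamma$-invariant function on the transitive $\Gamma$-set $\Gamma/\Gamma_n$, hence equals the constant $\mu(A)$; letting $n\to\infty$ and applying the martingale convergence theorem forces $1_A=\mu(A)$ a.e., so $\mu(A)\in\{0,1\}$.

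Finally I would conclude by applying Theorem~\ref{thm:compact}, splitting on whether the action is faithful. Let $N:=\bigcap_n\bigcap_{x\in\Gamma} x\Gamma_n x^{-1}$ be the kernel of the action. If $N\neq\{1\}$, any $g\in N\setminus\{1\}$ acts trivially, so $\mu(S(g))=0$ and $t_g=1>1-\epsilon$, and we are done. If $N=\{1\}$ the action is faithful, ergodic, compact, and $\Gamma$ is MIF, so Theorem~\ref{thm:compact} applies; the hypothesis $t_h>0$ for some nonidentity $h$ means $\mu(S(h))<1$, i.e.\ the action is \emph{not} essentially free, whence by the contrapositive of Theorem~\ref{thm:compact} the subgroup $\Gamma$ is not discrete in $G$. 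Thus $\inf\{\mu(S(g)):g\in\Gamma\setminus\{1\}\}=0$, and for any $\epsilon>0$ there is a nonidentity $g$ with $\mu(S(g))<\epsilon$, that is $t_g>1-\epsilon$.

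The only genuinely delicate points are that the $\Gamma_n$ need not be normal---handled above because it is transitivity at each finite level, rather than normality, that drives both the ergodicity argument and the fixed-point computation---and the possible non-faithfulness, which the case split disposes of. Beyond these, the corollary is a direct translation of Theorem~\ref{thm:compact}, so I expect the main obstacle to be merely the careful bookkeeping in setting up the profinite action and verifying that it meets the hypotheses of that theorem.
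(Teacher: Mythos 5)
Your proof is correct and follows exactly the route the paper intends: the corollary is stated as an immediate consequence of Theorem~\ref{thm:compact}, applied to the profinite action $\Gamma\curvearrowright\varprojlim_n\Gamma/\Gamma_n$, with $t_g$ identified as $\mu(F(g))=1-\mu(S(g))$. Your write-up supplies the details the paper leaves implicit (existence of the limit via monotonicity, ergodicity from transitivity at each finite level, and the case split on faithfulness), and all of these steps check out.
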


Thus, in terms of residual chains of finite index subgroups, we arrive at a dichotomy: Either the chain $(\Gamma_n)_n$ is a Farber chain (see \cite[Theorem 0.3]{MR1625742}), i.e. $t_g=0$ for all $g\neq 1$, or the opposite holds: for any $\varepsilon>0$, there exists $g \in \Gamma$, such that the probability that $g$ is contained in a random conjugate of $\Gamma_n$ is a least than $1-\varepsilon.$

Theorem \ref{thm:compact} implies that for every nonfree ergodic compact action of an MIF group $\Gamma$ on a probability space $(X,\mu )$, there exists a sequence $(g_n)_{n\geq 0}$ of nonidentity elements of $\Gamma$ satisfying $\mu (F(g_n))\rightarrow 1$. 
It is unclear whether this sequence $(g_n)_{n\geq 0}$ can be chosen independently of the nonfree ergodic compact action of $\Gamma$, even in the case where $\Gamma$ is a free group. Let us record this as a question:

\begin{question} \label{convseq}
Let $\Gamma$ be a nonabelian free group. Does there exist a sequence $(g_n)_{n\geq 0}$ of nonidentity elements of $\Gamma$ such that for every ergodic compact p.m.p.\ action $\Gamma\curvearrowright (X,\mu )$ which is not essentially free we have $\mu (\{ x: g_nx=x \} )\rightarrow 1$ as $n\rightarrow \infty$?
\end{question}

This should be compared to the results in \cite{MR3043070}. By results from that paper, there exists a sequence $(g_n)_{n \geq 0}$ in every non-abelian free group, such that $g_n \to 1$ in the weak topology for every compact action (no matter if the action is essentially free or not). A potential strategy to arrive at a positive answer to Question \ref{convseq} is to enumerate all non-trivial elements of $F_2$ in a sequence $(h_n)_{n \geq 0}$ and consider a sequence $(g_n)_{n \geq 0}$ as above. One could then start taking commutators of conjugates in some determined iteration scheme (compare to the proof of the main result in \cite{MR3043070}).

\section{A case study building on work of Choi and Blackadar}


In this section, we analyze a natural example of a free subgroup of the full group of a hyperfinite equivalence relation provided by a combination of the work of Choi \cite{MR540914} and Blackadar \cite{MR808296}.

Choi proved that the following unitaries in $M_2(\mathcal O_2)\cong \mathcal O_2$ satisfy $\widehat U^2 = 1$, $\widehat V^3 = 1$ and generate a copy of $C^*_r(\PSL(2,\Z))$ \cite{MR540914}:
\[
U = \begin{pmatrix} 0 & 1 \\ 1 & 0 \end{pmatrix},\quad
V = \begin{pmatrix} 0 & S_2^* \\ S_1 & S_2 S_1^*\end{pmatrix},
\]
where $S_1$ and $S_2$ are canonical isometries generating $\mathcal O_2$.

Blackadar, see  \cite{MR808296}, then produced an explicit $C^*$-subalgebra of the UHF algebra $\bigotimes_{k\in\Z} M_2(\C)$ which surjects onto $\mathcal O_2$.
In what follows, we describe his construction with slight change of notation.

Consider the crossed product $B=\left(\bigotimes_{k\in\mathbb Z} M_2(\mathbb C)\right)\rtimes \mathbb Z$, where $\mathbb Z$ acts by shifting the tensor factors. It is generated by the canonical unitary $z$ implementing the shift and a copy of $M_2(\mathbb C)$ in the $0$-th entry. We let the latter be generated by a projection $e_0$ and a unitary $t_0$ which maps $e_0$ to its complement: $t_0 e_0 t_0^* = 1-e_0$. Explicitly, we can take
\[
e_0=\begin{pmatrix} 1 & 0 \\ 0 & 0 \end{pmatrix},\quad t_0 = \begin{pmatrix} 0 & 1 \\ 1 & 0 \end{pmatrix} \in M_2(\mathbb C)
\]

The shifted copies of $e_0$ and $t_0$ will be denoted by $e_k= z^k e_0 z^{-k}$ resp. $t_k=z^k t_0 z^{-k}$, $k\in \mathbb Z$. We let $e\coloneqq e_{0}$. 

Blackadar then introduces the following elements:
\[
s_1\coloneqq z(1-e)=(1-e_1)z,\quad s_2 = t_1s_1 = t_1z(1-e) = zt_0(1-e) = t_1(1-e_1)z.
\]
They satisfy
\[
s_1^*s_1 = s_2^*s_2 = 1-e,\quad s_1s_1^* = 1-e_1, \quad s_2s_2^* = e_1
\]
and therefore their images $S_1$ and $S_2$  in the quotient by a suitable ideal containing $e$ generate the Cuntz algebra $\mathcal O_2$.
\begin{lemma}
The following elements are unitary lifts of $U$ and $V$ into $M_2(B)$:
\[
\widehat U = \begin{pmatrix} 0 & 1 \\ 1 & 0 \end{pmatrix},\quad
\widehat V = \begin{pmatrix} e & s_2^* \\ s_1 & s_2 s_1^*\end{pmatrix}
\]
which generate a copy of $\PSL(2,\mathbb Z)$ inside ${\rm U}(M_2(B))$.
\end{lemma}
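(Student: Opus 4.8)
The plan is to verify three things about the matrices $\widehat U$ and $\widehat V$: first, that they are genuinely unitary elements of $M_2(B)$; second, that they reduce modulo the relevant ideal to Choi's unitaries $U$ and $V$ in $M_2(\mathcal O_2) \cong \mathcal O_2$; and third, that together they generate a copy of $\PSL(2,\mathbb Z)$ inside the unitary group ${\rm U}(M_2(B))$. The first two are essentially computational and follow from Blackadar's relations recorded just above the statement; the third is where the real content lies and will require combining Choi's result with an order-of-element argument.

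\emph{Unitarity and lifting.} For $\widehat U$ unitarity is immediate since it is the self-adjoint permutation matrix squaring to the identity. For $\widehat V$ I would compute $\widehat V^* \widehat V$ and $\widehat V \widehat V^*$ directly as $2\times 2$ matrices over $B$, plugging in the relations $s_1^*s_1 = s_2^*s_2 = 1-e$, $s_1 s_1^* = 1-e_1$, $s_2 s_2^* = e_1$, together with $e = e_0$ and the orthogonality/commutation data among $e_0, e_1, s_1, s_2$ coming from their explicit description in terms of $z, e, t_0, t_1$. The diagonal entries should collapse to $e + s_2^* s_2 = e + (1-e) = 1$ and $s_1 s_1^* + s_2 s_1^* s_1 s_2^*$, and the off-diagonal entries should cancel; checking both products gives the identity matrix. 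That $\widehat U, \widehat V$ are lifts of $U, V$ is then the observation that the quotient map $B \to \mathcal O_2$ sends $e \mapsto 0$, $s_i \mapsto S_i$, so $\widehat V$ maps entrywise to Choi's $V = \begin{pmatrix} 0 & S_2^* \\ S_1 & S_2 S_1^* \end{pmatrix}$ and $\widehat U$ maps to $U$.

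\emph{Generating $\PSL(2,\mathbb Z)$.} Here the strategy is to show the subgroup $\langle \widehat U, \widehat V \rangle \leq {\rm U}(M_2(B))$ satisfies exactly the defining relations of $\PSL(2,\mathbb Z) = \mathbb Z/2 \ast \mathbb Z/3$, namely $\widehat U^2 = 1$, $\widehat V^3 = 1$, and no further relations. The relation $\widehat U^2 = 1$ is clear. For $\widehat V^3 = 1$ I would compute the cube directly from the relations; one expects the isometry identities to force $\widehat V^3 = 1$ on the nose in $M_2(B)$ (not merely modulo the ideal), which is the crucial point since Choi only guarantees these relations downstairs in $\mathcal O_2$. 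The absence of extra relations is then inherited from Choi: since the quotient map $M_2(B) \to M_2(\mathcal O_2)$ is a group homomorphism on unitaries carrying $\langle \widehat U, \widehat V\rangle$ onto $\langle U, V\rangle \cong \PSL(2,\mathbb Z)$, any word trivial in $M_2(B)$ is a fortiori trivial in $M_2(\mathcal O_2)$, so the natural surjection $\PSL(2,\mathbb Z) \twoheadrightarrow \langle \widehat U, \widehat V\rangle \twoheadrightarrow \langle U, V\rangle$ composes to an isomorphism, forcing both maps to be isomorphisms.

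\emph{The main obstacle} will be verifying $\widehat V^3 = 1$ exactly in $M_2(B)$ rather than only in the quotient. Upstairs the element $e$ is a nonzero projection, so the cube is an honest matrix computation whose entries are words in $e, e_1, s_1, s_2, z$, and one must check that the contributions involving $e$ conspire to cancel. The payoff of doing this upstairs is precisely what makes the lemma useful: it realizes $\PSL(2,\mathbb Z)$ literally inside ${\rm U}(M_2(B))$ with $B$ a crossed product of a UHF algebra, rather than only in the Cuntz algebra quotient, which is what connects the construction to full groups of the hyperfinite equivalence relation.
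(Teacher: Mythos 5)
Your proposal is correct and takes essentially the same approach as the paper: direct matrix computation of the products using Blackadar's relations $s_1^*s_1=s_2^*s_2=1-e$, $s_2^*s_1=0$, $s_2e=0$, combined with the fact that the group identification descends to Choi's copy of $\PSL(2,\Z)$ in the quotient. The only notable difference is that the obstacle you flag (computing $\widehat V^3$ directly) is sidestepped in the paper by verifying $\widehat V^*\widehat V=1$ and $\widehat V^2=\widehat V^*$, which together yield unitarity and $\widehat V^3=1$ simultaneously, while your explicit surjection argument $\PSL(2,\Z)\twoheadrightarrow\langle \widehat U,\widehat V\rangle\twoheadrightarrow\langle U,V\rangle$ spells out a step the paper leaves implicit.
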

\begin{proof}
Indeed,
\[
\widehat V^* = \begin{pmatrix} e & s_1^* \\ s_2 & s_1 s_2^*\end{pmatrix},
\]
and so
\[
\widehat V^* \widehat V = \begin{pmatrix} e+s_1^*s_1 & es_2^* + s_1s_2^*s_1 \\ s_2e+s_1^*s_2s_1^* & s_2s_2^* + s_1s_2^*s_2s_1^* \end{pmatrix} = \begin{pmatrix} 1 & 0 \\ 0 & 1 \end{pmatrix},
\]
since $s_1^*s_1 = 1-e$, $s_2^*s_1 = 0$, $s_2 e = 0$ and
\[
s_2s_2^* + s_1s_2^*s_2s_1^* = e_1 + s_1(1-e)s_1^* = 1 - s_1es_1^* = 1.
\]

Moreover,
\[
\widehat V^2 = \begin{pmatrix} e+s_2^*s_1 & es_2^* + s_2^*s_2s_1^* \\ s_1e+s_2s_1^*s_1 & s_1s_2^* + s_2s_1^*s_2s_1^* \end{pmatrix} = \widehat V^*.
\]

This finishes the proof.
\end{proof}

We equip $B$ with the canonical normalized trace $\tau$ coming from the crossed product structure using the standard tensor product trace on $\bigotimes_{k\in \mathbb Z} M_2(\mathbb C)$. We then canonically extend $\tau$ to a normalized trace on $M_2(B)$.

In view of the existence of the surjection of the subalgebra generated by $U$ and $V$ onto $C^*_r({\rm PSL}_2(\mathbb Z))$, this representation of ${\rm PSL}_2(\mathbb Z)$ into ${\rm U}(M_{2^{\infty}}(\mathbb C))$ is ``as non-amenable as it gets'' and provides a promising candidate of a free copy of ${\rm PSL}_2(\mathbb Z)$ (and hence $F_2$) in ${\rm U}(R)$ with the $2$-norm.

In the next step, we observe that this copy of ${\rm PSL}_2(\mathbb Z)$ actually sits inside the full group of a natural hyperfinite equivalence relation. Indeed, consider the Cantor space $X=\{0,1\}^\Z\times \{0,1\}$ with the natural product measure giving each bit weight $1/2$. We will interpret it as the state space of a Turing machine with a bi-infinite tape with zeroes and ones and an additional state (or signal) that can take values in $\mathbb Z/2$. We identify the algebra $C(X)$ with the diagonal subalgebra in $\bigotimes_{k\in \Z} M_2(\C)$; the trace $\tau$ then corresponds to the aforementioned product measure. Now, the unitary $z$ corresponds to the tape shift, and $t_0$ corresponds to switching the signal. Therefore the unitaries $\widehat U$ and $\widehat V$ are elements of the full group $[\mathcal R]$ of the hyperfinite equivalence relation given by the natural measure preserving action of $(\Z/2\Z \wr \mathbb Z)\times \Z/2\Z$ on $X$, where the first factor acts by shifting and changing entries on the tape, and the second factor acting by changing the signal. It is routine to check that under this identification the trace $\tau$ of an element $g\in[\mathcal R]$ is equal to the measure of the set of the fixed points of $g$.

To make the computations easier, we will conjugate $\widehat U$ and $\widehat V^*$ to the elements
\[
u= \diag(1,z^*)\cdot \widehat U\cdot \diag(1,z)= \begin{pmatrix} 0 & z \\ z^* & 0 \end{pmatrix}
\]
and
\[
v= \diag(1,^*) \widehat V^* \diag(1,z)= \begin{pmatrix} e & s_1^*z \\ z^*s_2 & z^*s_1 s_2^*z\end{pmatrix} = \diag(1,t_0)\cdot\begin{pmatrix} e & 1-e \\ 1-e & e \end{pmatrix}.
\]

Now, $u$ has the interpretation that it changes the signal and shifts the tape according to the signal to the right (if the signal was $1$) or to the left (if the signal was $0$). In particular, it does not change the tape -- it only changes the signal and shifts the tape. The element $v$ has the interpretation that it changes the signal iff we read a $1$ at the zeroeth entry and afterwards, if the signal is $1$, it also changes the zeroeth entry of the tape. In particular, the operation $v$ does not shift the tape.

We summarize the above observations as follows:
\begin{proposition}
Consider the Cantor space $X=\{0,1\}^\Z \times \{ 0,1\}$ with the natural product measure $\mu$ assigning weight $1/2$ to every bit, understood as the state space of a Turing machine whose tape is bi-infinite with entries from the alphabet $\{0,1\}$ and whose only internal state, the signal $S$, is taking values in $\mathbb Z/2$. Let $x_0$ denote the 0-th entry on the tape.

Consider the following commands of this Turing machine:
\begin{itemize}
\item[$u$:] if $S = 0$, then shift the tape to the left and set $S\coloneqq 1$; if $S = 1$, shift the tape to the right and set $S\coloneqq 0$;

\item[$v$:] permute the pairs $(x_0,S)$ as follows: $(0,0)\mapsto (0,0)$, $(0,1)\mapsto(1,1)\mapsto(1,0)\mapsto(0,1)$ without shifting the tape.
\end{itemize}

Then $u$ and $v$ act on $X$ as probability measure-preserving automorphisms of order $2$ and $3$ respectively, generating a copy of $\PSL(2,\Z)$ inside the full group of a hyperfinite equivalence relation on $X$.
\end{proposition}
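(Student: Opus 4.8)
The plan is to read the Proposition as the purely dynamical repackaging of the preceding Lemma: all of the algebraic content (the orders of $u$ and $v$ and the isomorphism type of the group they generate) will be imported by a conjugation argument, while the genuinely new task is to check that the two operators $u$ and $v$, viewed as operators on $L^2(X,\mu)$, implement exactly the stated commands on the point set $X$. I would proceed in three stages.

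First I would fix the identification. Writing a point of $X=\{0,1\}^{\Z}\times\{0,1\}$ as a pair $(\omega,S)$ with tape $\omega\in\{0,1\}^{\Z}$ and signal $S\in\{0,1\}$, I identify $L^2(X,\mu)$ with $L^2(\{0,1\}^{\Z})\otimes\C^2$, where the matrix structure of $M_2(B)$ is the signal coordinate (row/column $1$ for $S=0$, row/column $2$ for $S=1$) and $B=(\bigotimes_k M_2(\C))\rtimes\Z$ acts on the tape coordinate. Here $z$ is the shift and $z^{*}$ its inverse (the labels ``left'' and ``right'' are fixed to match the statement), $e=e_0$ is the indicator of $\{x_0=0\}$, and $t_0$ flips the $0$-th bit. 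The key qualitative observation is that $u=\left(\begin{smallmatrix}0&z\\ z^{*}&0\end{smallmatrix}\right)$ and $v=\diag(1,t_0)\left(\begin{smallmatrix}e&1-e\\ 1-e&e\end{smallmatrix}\right)$ both send every Dirac mass $\delta_{(\omega,S)}$ to another Dirac mass, so each is a permutation of $X$; since the shift, the bit-flip and the signal-flip all preserve $\mu$, both $u$ and $v$ lie in $\mathrm{Aut}(X,\mu)$.

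Second I would run the case check matching the matrices to the commands. Applying $u$ to $\delta_{(\omega,0)}$ gives $\delta_{(z^{*}\omega,1)}$ and to $\delta_{(\omega,1)}$ gives $\delta_{(z\omega,0)}$, which is precisely ``if $S=0$ shift left and set $S:=1$; if $S=1$ shift right and set $S:=0$''; a second application returns to the start, so $u^2=\id$. For $v$ I would evaluate the composition $\diag(1,t_0)\circ\left(\begin{smallmatrix}e&1-e\\ 1-e&e\end{smallmatrix}\right)$ on the four values of the pair $(x_0,S)$: the middle matrix keeps the signal when $x_0=0$ and flips it when $x_0=1$, after which $\diag(1,t_0)$ flips $x_0$ exactly on the $S=1$ states. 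One obtains $(0,0)\mapsto(0,0)$ together with the $3$-cycle $(0,1)\mapsto(1,1)\mapsto(1,0)\mapsto(0,1)$, with no other tape entry touched, so $v^3=\id$. This is the step I expect to be the main obstacle --- not because it is deep, but because it is where every convention (shift direction, which matrix index is which signal value, and which bit value $e$ projects onto) must be pinned down consistently; a single index or sign mismatch would swap left and right or break the $3$-cycle.

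Third I would assemble the remaining conclusions. By construction $u=\diag(1,z^{*})\,\widehat U\,\diag(1,z)$ and $v=\diag(1,z^{*})\,\widehat V^{*}\,\diag(1,z)$ are the images of $\widehat U$ and $\widehat V^{*}$ under conjugation by the unitary $\diag(1,z)\in\mathrm U(M_2(B))$. Since $\widehat V^{*}=\widehat V^{2}\in\langle\widehat U,\widehat V\rangle$, this conjugation maps $\langle\widehat U,\widehat V\rangle$ isomorphically onto $\langle u,v\rangle$, so by the preceding Lemma $\langle u,v\rangle$ is a copy of $\PSL(2,\Z)$; in particular the orders of $u$ and $v$ are exactly $2$ and $3$ (they cannot drop, as $\widehat U\neq 1\neq\widehat V$ and conjugation is injective). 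Finally, the commands $u$ and $v$ are piecewise assembled from the shift, the signal-flip and the $0$-th-bit-flip, each of which is realized by the p.m.p.\ action of $(\Z/2\Z\wr\Z)\times\Z/2\Z$ on $X$; hence $u,v\in[\mathcal R]$, and $\mathcal R$ is hyperfinite because the acting group is amenable. Combining the three stages proves the statement.
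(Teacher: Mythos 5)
Your proof is correct and takes essentially the same route as the paper, which states this proposition explicitly as a summary of the preceding observations: the Lemma giving the copy of $\PSL(2,\Z)$ generated by $\widehat U,\widehat V$ in ${\rm U}(M_2(B))$, the conjugation by $\diag(1,z)$ producing $u$ and $v$, the case-by-case Turing-machine interpretation of these two elements, and membership in the full group of the orbit equivalence relation of $(\Z/2\Z\wr\Z)\times\Z/2\Z$ acting on $X$ (hyperfinite since that group is amenable). Your three stages reproduce exactly these steps; the only cosmetic looseness is the ``Dirac mass in $L^2$'' phrasing, which is harmless since what you really use is that $u$ and $v$ normalize the diagonal copy of $C(X)$ and hence induce point transformations.
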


Since Theorem \ref{main2} is only true for ergodic actions, we have to investigate the ergodicity of the above action. It turns out that it is not ergodic as such, but has a transparent description of the ergodic components. 

We let $\pi_0\colon \{0,1\}^{\Z}\to \{0,1\}^{-\N}$ be the natural projection discarding everything on the right from the 0-th entry and $\pi_{-1}\colon \{0,1\}^{\Z}\to \{0,1\}^{-\N}$ be the natural projection discarding everything on the right from the $(-1)$-st entry. We furthermore let $L\colon \{0,1\}^{-\N}\setminus\{(\dots,1,1,1)\}\to \{0,1\}^{-\N}$ be the map that discards the rightmost zero in the sequence and all entries on the right from it.

\begin{proposition}\label{prop:isomorphic-ergodic-components}
The following map
\[
p\colon (X,\mu)\to (\{0,1\}^{-\N},p_*\mu),
\]
\[
p(x,0) = (L\circ \pi_0)(x),\quad p(x,1) = (L\circ \pi_{-1})(x).
\]
is the ergodic decomposition of the action $\PSL(2,\Z)\curvearrowright (X,\mu)$. All ergodic components of this action are isomorphic; in particular, for each $g\in \PSL(2,\Z)$ the measure of the fixed point set is a.e. constant on the space of ergodic components.
\end{proposition}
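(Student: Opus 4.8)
The plan is to establish the three assertions of the proposition separately: that $p$ is $\Gamma$-invariant, that the $\Gamma$-action is ergodic on $p_*\mu$-almost every fiber of $p$, and that all fibers are isomorphic as $\Gamma$-systems. The first two together are exactly the statement that $p$ realizes the ergodic decomposition: invariance shows that the $\sigma$-algebra generated by $p$ is contained in the invariant $\sigma$-algebra $\mathcal I$, while fiberwise ergodicity gives the reverse inclusion, since an invariant set must meet each ergodic fiber in a set of full or null conditional measure and is therefore $p$-measurable. The third assertion immediately yields the final sentence: a $\Gamma$-isomorphism of ergodic components carries $F(g)$ to $F(g)$, so $\omega\mapsto \mu_\omega(F(g))$ is constant, where $\mu_\omega$ denotes the conditional measure on $p^{-1}(\omega)$.

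Invariance I would verify directly on the two generators $u,v$ described above, writing a point of $X$ as $(x,S)$ with $x=(x_k)_{k\in\Z}$ and normalizing the shift so that $u(x,0)=(\sigma x,1)$ with $(\sigma x)_k=x_{k+1}$. For $u$ the whole content is the single identity $\pi_{-1}(\sigma x)=\pi_0(x)$ (shifting the tape once turns the window ``positions $\le 0$'' into the window ``positions $\le -1$''), which gives $p(u(x,0))=L(\pi_{-1}(\sigma x))=L(\pi_0(x))=p(x,0)$, and symmetrically for the other signal value. For $v$, which alters only $(x_0,S)$ and never the coordinates $x_k$ with $k<0$, I would run through the three nontrivial cases of the $3$-cycle; in each the prescribed change of $x_0$ and signal is exactly compensated by passing between the windows $\pi_0$ and $\pi_{-1}$ once $L$ has stripped the rightmost zero together with its block of trailing ones. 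This gives $p\circ u=p$ and $p\circ v=p$ everywhere.

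The heart of the argument, and the step I expect to be the main obstacle, is to identify the orbit equivalence relation of $\langle u,v\rangle$ with a concrete hyperfinite relation of \emph{dyadic odometer type}. One should read the tape as a binary expansion with least significant digit at position $0$ and significance increasing to the left, the signal $S$ recording the position of the binary point (which is why $p$ uses $\pi_0$ when $S=0$ and $\pi_{-1}$ when $S=1$); then the map $L$ produces precisely the canonical odometer representative, stripping the block of trailing ones and the first zero at which a carry would stop. The engine is a local move: a short computation with the word $uvu$ shows that, in the appropriate signal state, it flips the coordinate $x_1$, and transporting this to an arbitrary position requires a word in $u,v$ producing a net shift of that size — possible because, although $u$ has order two, inserting $v$ to manage the signal lets one compose shifts in the same direction. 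Since $p$ ignores the coordinates $x_k$ with $k\ge 1$, such flips stay inside a fiber, so the fiber relation contains the cofinal (finite-change) relation of the Bernoulli$(1/2)$ factor $\{0,1\}^{\{1,2,\dots\}}$; by the Kolmogorov $0$–$1$ law every invariant set is then independent of that factor, and that moving the binary point by $u$ simultaneously reshuffles the finite carry data near $0$ upgrades this to full triviality of $\mathcal I$ on the fiber. Pinning down \emph{exactly} which coordinate changes the group realizes — in particular checking that carries propagate only toward $-\infty$, so that the relation is a proper, non-ergodic subrelation of the full cofinal relation on $\{0,1\}^{\Z}$ — is the delicate combinatorial point.

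Once this identification is in place, the remaining claims follow from the classical ergodic decomposition of the two-sided dyadic odometer with respect to Bernoulli measure: the ergodic components are indexed by the tail at $-\infty$, whose canonical representative is exactly the output of $L\circ\pi$, so the space of components is $(\{0,1\}^{-\N},p_*\mu)$ and the decomposition map is $p$. Each component is measurably isomorphic, as a $\Gamma$-system, to a single fixed model — the one-sided odometer together with the cofinal factor on the right-hand coordinates, on which $\Gamma$ acts through the Choi--Blackadar realization — the isomorphism being the map that forgets the frozen high-order tail $\omega$ and records the carry block and the right-hand part. This model is manifestly ergodic and independent of $\omega$, which yields simultaneously the fiberwise ergodicity, the isomorphism of all ergodic components, and hence $\mu_\omega(F(g))=\nu(F(g))$ for every $g\in\Gamma$ and $p_*\mu$-almost every $\omega$, completing the proof.
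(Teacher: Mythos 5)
Your overall architecture matches the paper's: verify that $p$ is invariant (your generator-by-generator check of $p\circ u=p$ and $p\circ v=p$ is correct and is an equivalent, somewhat more direct, form of the paper's ``stopping zero'' observation), prove ergodicity on each fiber, and conclude by exhibiting an isomorphism between fibers. The fiber measures you would need are also the right ones: the paper writes $\nu_y=\tfrac12(\varphi_y)_*\theta+\tfrac12(\psi_y)_*\theta$ with $\varphi_y(1^k0x)=(y01^kx,0)$, which encodes exactly the signal bit, the carry block $1^k$, and the right tail $x$ that appear in your description.

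However, there is a genuine gap, and it sits precisely where you say you expect ``the main obstacle'': the fiberwise ergodicity is asserted, not proved. Two specific problems. First, your ``engine'' is wrong as stated: the word $uvu$ does \emph{not} flip the coordinate $x_1$. Starting from $(x,0)$, after $u$ and $v$ the signal is $1$ only if $x_1=0$ (case $(0,1)\mapsto(1,1)$), and then the second $u$ shifts back, producing the flip; but if $x_1=1$ the case $(1,1)\mapsto(1,0)$ sets the signal to $0$, so the second $u$ shifts \emph{further left} instead of returning. The move is a conditional, carry-propagating (odometer-type) operation, not an involution, so ``transporting this flip to an arbitrary position'' does not go through as described. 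Second, even granting that the fiber relation contains the finite-change relation on the tail coordinates $\{1,2,\dots\}$, your upgrade from ``invariant sets are independent of that factor'' to full triviality of the invariant $\sigma$-algebra on the fiber is waved through: the fiber also carries the signal bit and the carry-block length $k$, and these must be absorbed into the argument. The paper closes exactly this gap with an explicit algorithm: given any state $(y01^jtx,S)$ in the fiber ($t$ a finite string of length $k$) and any target signal $S'$, the element built by alternately applying $u$ (shift left) and a suitable power of $v$ (to write a $1$ and reset the signal) carries it to the canonical form $(y01^{j+k}x,S')$. This claim shows that $\varphi_y$ conjugates the tail (eventual-equality) relation on $(\{0,1\}^{\N},\theta)$ onto the orbit relation restricted to the signal-$0$ part -- note that the sequence $1^k0x$ encodes $k$ and $x$ simultaneously, which is how the carry data gets handled -- and the reverse containment is checked by mapping orbit-equivalent states to eventually equal sequences via $\varphi_y^{-1}\cup\psi_y^{-1}$. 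Ergodicity of the tail relation under Bernoulli measure (Kolmogorov's $0$--$1$ law, as you invoke) then finishes it, and since $\nu_y=\tfrac12\nu_y^0+\tfrac12 u_*\nu_y^0$, ergodicity on the signal-$0$ part propagates to the whole fiber. Without this (or an equivalent) combinatorial claim, your proof of the central assertion is incomplete.
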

\begin{proof}
The key observation here is the following: if $(x,S)$ is such that $(x_0,S) = (0,0)$, then $v$ does not change the pair $(x_0,S)$, and then $u$ is the only nontrivial operation, which necessarily shifts the tape to the left. Furthermore, in an arbitrary state $u$ is the only operation which can shift the tape and after shifting it to the right the signal $S$ is set to $0$. Therefore, if $x_{-1} = 0$ and we shift the tape to the right (by applying $u$), then we necessarily get $(x_0,S) = (0,0)$, from which we can only shift the tape to the left.

Together, this implies the following claim: given an initial state $(x,S)$, we can never shift the tape beyond the right-most zero in $\pi_0(x)$ if $S=0$ or beyond the right-most zero in $\pi_{-1}(x)$ if $S=1$ (we thus refer to this zero on the tape as the ``stopping zero''). Now, the map $p$ is exactly the map which discards the stopping zero and the half-tape to the right of it (ignoring the null set of states where no zero occurs on the on the strictly negative half of the tape).

By the above considerations, each fiber $p^{-1}(y)$ is invariant under the action, being exactly the set of states which have the prefix $y$ to the left of the stopping zero. By disintegration of measures, it comes naturally equipped with the probability measure $\nu_y$ which can be described as follows. Consider the product measure $\theta$ on $\{0, 1\} ^\N$ and the maps 
\[
\varphi_y \colon \{ 0, 1\} ^\N \setminus \{ (1,1,1,\dots ) \} \to
\{0,1\}^{\Z}\times\{0\},
\]
\[
1^k0x \mapsto (y01^kx, 0),
\]
where the leftmost bit of $x$ occupies coordinate $1$ in the string $y01^kx$, and
\[
\psi_y \colon \{ 0, 1\} ^\N \setminus \{ (1,1,1,\dots ) \} \to
\{0,1\}^{\Z}\times\{1\},
\]
\[
1^k0x \mapsto (y01^kx, 1),
\]
where the leftmost bit of $x$ occupies coordinate $0$ in the string $y01^kx$. Then $\nu_y = \frac12(\varphi_y)_*\theta + \frac12(\psi_y)_*\theta$. Equivalently, $\nu _y =\tfrac{1}{2}\nu _y ^0 + \tfrac{1}{2}u_*\nu _y^0$, where $\nu _y^0 = (\varphi_y)_*\theta$.

Let us check that the action of $\PSL(2,\Z)$ on $(p^{-1}(y),\nu_y)$ is ergodic. We first prove the following claim: given an arbitrary initial state of the form $(z,S)=(y01^jtx,S)\in p^{-1}(y)$ (where $x$ is the infinite tail and $t$ is a finite string of length $k$ whose leftmost bit occupies coordinate $1$ if $S=0$, and coordinate $0$ if $S=1$) together with an arbitrary $S'\in \{0,1\}$, there is an element $g\in \PSL(2,\Z)$ such that $g(z,S)=(y01^{j+k}x,S')$, where each bit of $t$ is replaced by a $1$ and the leftmost bit of $x$ occupies coordinate $1$. If $S=0$, we first apply $u$ to shift the tape to the left. Then we repeat the following procedure $k$ times: apply a power of $v$ to get $(z_0,S) = (1,0)$ (this is possible because now $S = 1$), then apply $u$ again to shift the tape further to the left. Finally, we can apply $v$ if necessary to change $S$ to $S'$, retaining $1$ on the tape, finishing the proof of the claim. 

Let $E_y$ denote the orbit equivalence relation of the action of $\PSL(2,\Z )$ on $p^{-1}(y)$, and let $E_y^0$ denote its restriction to $p^{-1}(y)\cap \{ 0, 1 \}^{\Z} \times \{ 0 \}$.  It follows from the claim that the map $\varphi _y$ defined above gives an isomorphism from the equivalence relation of eventual equality on $\{ 0, 1 \} ^{\N}$ (equipped with product measure) to $E_y^0$; to see that $\varphi _y$ gives an isomorphism with $E_y^0$ and not just with one of its subequivalence relations, observe that the map $p^{-1}(y)\rightarrow \{ 0, 1\} ^{\N}$, given by $(z,0)\mapsto \varphi _y^{-1}(z,0)$ and $(z,1)\mapsto \psi _y ^{-1}(z,1)$, maps $E_y$-equivalent points to eventually equal sequences. Since $\nu _y =\tfrac{1}{2}\nu _y ^0 + \tfrac{1}{2}u_*\nu _y^0$, this implies that the action of $\PSL (2,\Z )$ on $(p^{-1}(y),\nu _y )$ is ergodic.

Finally, it is easy to see that the actions of $\PSL(2,\Z)$ on arbitrary two fibers $p^{-1}(y)$ and $p^{-1}(y')$ are isomorphic through the obvious map which replaces the prefix $y$ to the left of the stopping zero with the prefix $y'$.
\end{proof}

We thus are interested in understanding the measures of fixed point sets of words in $u$ and $v$. The above proposition suggests an efficient method to evaluate them on a computer. Indeed, it is obvious that for every initial state of the Turing machine a word $w=w(u,v)$ in $u$ and $v$ can shift the tape at most by the number of occurences of $u$ in $w$ which we denote by $|w|_u$. Therefore, it is enough to evaluate the word $w$ on $2^{2|w|_u + 2}$ possible initial configurations of the Turing machine ($2|w|_u + 1$ bits on the tape and 1 bit of the signal), checking whether the Turing machine returns to the initial configuration; the proportion of the fixed configurations is exactly the measure of the fixed point set.

Moreover, it turns out that on most of the initial configurations the tape will actually be shifted by far less than $|w|_u$ in one direction. Thus, to estimate the measure of the fixed points from below, it is enough to bound the tape displacement by some number $\ell$, discarding an initial configuration as ``possibly non-fixed'' once the tape displacement exceeds $\ell$ in the process of applying $w(u,v)$. This reduces the number of initial configurations to be checked to $2^{2\ell + 2}$.

Using computer search and the idea of applying iterated commutators, we were able to identify following elements with corresponding lower trace estimates with displacement bound $\ell \coloneqq 7$ (see the \textsc{Magma} code in the Appendix). We let $a\coloneqq uv$, $b\coloneqq uv^2$,
\[
g_1\coloneqq [a^{14} v a^{-14},v], \quad \tau(g_1)\geq 0.53,
\]
\[
g_2\coloneqq [a^9 g_1a^{-9},g_1],\quad \tau(g_2)\geq 0.64,
\]
\[
g_3\coloneqq [b^2g_2b^{-2},g_2], \quad \tau(g_3)\geq 0.69.
\]

One quick way to see that ${\rm PSL}_2(\mathbb Z)$ is MIF is to note that the shortest mixed identity of ${\rm PSL}_2(p)$ is of length $\Omega(p)$, see \cite{bst}. Applying Proposition \ref{prop:isomorphic-ergodic-components} and Theorem \ref{main2}, we thus obtain the following:

\begin{theorem}
The above copy of ${\rm PSL}_2(\mathbb Z)$ is not discrete as a subgroup of $([\mathcal R],d)$ resp. $({\rm U}(R),\lVert{\cdot}\rVert_2)$.
\end{theorem}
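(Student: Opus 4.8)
The plan is to deduce the statement from Theorem \ref{main2} after reducing to a single ergodic component. The action $\PSL(2,\Z)\curvearrowright(X,\mu)$ is faithful but, as noted above, not ergodic, so Theorem \ref{main2} does not apply directly. However, Proposition \ref{prop:isomorphic-ergodic-components} tells us that the ergodic components $(p^{-1}(y),\nu_y)$ are mutually isomorphic and that for each fixed $g\in\PSL(2,\Z)$ the support measure $\nu_y(S(g))$ is independent of $y$. Since $\PSL(2,\Z)$ is countable, I would first discard a null set of bad $y$ and fix a single $y$ for which $\nu_y(S(g))=\mu(S(g))$ holds simultaneously for all $g\in\PSL(2,\Z)$.

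On this fixed component the action is ergodic by Proposition \ref{prop:isomorphic-ergodic-components}, and it is also faithful: were some $g\neq 1$ to act trivially on $p^{-1}(y)$, then $\mu(S(g))=\nu_y(S(g))=0$ would contradict faithfulness on $X$. Crucially, the identity $\nu_y(S(g))=\mu(S(g))$ for all $g$ means the modulus of discreteness is identical on $(p^{-1}(y),\nu_y)$ and on $(X,\mu)$; in particular the subgroup is discrete, or $1/2$-discrete, on the component if and only if it is so on $X$.

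Now I would argue by contradiction. Recall that $\PSL(2,\Z)\cong\Z/2\ast\Z/3$ is MIF, by the $\Omega(p)$ lower bound on the length of mixed identities of $\PSL(2,p)$ from \cite{bst}. If the copy of $\PSL(2,\Z)$ in $[\mathcal R]$ were discrete, then by the previous paragraph its action on the faithful, ergodic component $(p^{-1}(y),\nu_y)$ would be discrete, so Theorem \ref{main2} would force it to be $1/2$-discrete there, hence $1/2$-discrete on $(X,\mu)$: every nontrivial element $g$ would satisfy $\mu(S(g))\geq 1/2$. This is contradicted by the explicit element $g_1=[a^{14}va^{-14},v]$, which is a nontrivial word in $\Z/2\ast\Z/3$ and hence nontrivial in the full group by faithfulness, yet whose support satisfies $\mu(S(g_1))=1-\tau(g_1)\leq 0.47<1/2$. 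Since the metric $d$ and the $2$-norm induce the same topology under the identification $\tau(g)=\mu(F(g))$, non-discreteness holds in both $([\mathcal R],d)$ and $({\rm U}(R),\lVert{\cdot}\rVert_2)$.

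The conceptual part of this argument is soft; the real obstacle is producing the nontrivial element $g_1$ of support below $1/2$ together with a rigorous bound. Two points need care. First, the iterated commutator must not collapse to the identity in $\PSL(2,\Z)$---exactly the failure mode flagged in the introduction---which I would settle by a finite computation in the free-product normal form. Second, the estimate $\tau(g_1)\geq 0.53$ must be a genuine lower bound rather than a heuristic; this is guaranteed by the displacement-truncation scheme described before the theorem, since discarding every initial configuration whose tape is displaced beyond $\ell=7$ can only under-count fixed points, so the computed proportion of returning configurations is a valid lower bound for $\tau(g_1)$.
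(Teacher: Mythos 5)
Your proposal is correct and follows essentially the same route as the paper, which simply cites Proposition \ref{prop:isomorphic-ergodic-components} and Theorem \ref{main2} together with the computed bound $\tau(g_1)\geq 0.53$; you have merely made explicit the steps the paper leaves implicit (restricting to a single ergodic component where $\nu_y(S(g))=\mu(S(g))$ for all $g$ simultaneously, checking faithfulness and ergodicity there, verifying $g_1\neq 1$ in $\Z/2\ast\Z/3$, and noting that the truncation scheme genuinely under-counts fixed points). No gaps.
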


We include this example so that it becomes obvious that Theorem \ref{main2} is actually quite useful when studying concrete examples. We take that example as evidence that the answer to Question \ref{q2} might be negative.

\appendix
\section{Computation of the trace}
This is the source code of a \textsc{Magma} program giving the estimates for the elements in Section 5. It can be sucessfully executed on the free \textsc{Magma} online calculator \url{http://magma.maths.usyd.edu.au/calc/} if you restrict the computation to the first element (see the end of the code).

\vspace*{1em}
\begin{scriptsize}
\begin{verbatim}
QQ:=RationalField();
RR:=RealField(4);
ZZ:=Integers();

tape0:=AssociativeArray(ZZ);
signal0:=0;
state0:=[*signal0,tape0,0*];

function u(state,init)
    res:=state;
    if state[1] eq 0 then res[1]:=1; res[3]:=res[3]+1; end if;
    if state[1] eq 1 then res[1]:=0; res[3]:=res[3]-1; end if;
    return res,init;
end function;

function v(state,init)
    res:=state;
    rinit:=init;
    if res[1] eq 0 and res[2][state[3]] eq 0 then return res,rinit; end if;
    if res[1] eq 0 and res[2][state[3]] eq 1 then res[1]:=1; res[2][state[3]]:=0;
    return res,rinit; end if;
    if res[1] eq 1 and res[2][state[3]] eq 0 then res[1]:=1; res[2][state[3]]:=1;
    return res,rinit; end if;
    if res[1] eq 1 and res[2][state[3]] eq 1 then res[1]:=0; res[2][state[3]]:=1;
    return res,rinit; end if;
end function;

function vv(state,init)
    res:=state;
    rinit:=init;
    if res[1] eq 0 and res[2][state[3]] eq 0 then return res,rinit; end if;
    if res[1] eq 0 and res[2][state[3]] eq 1 then res[1]:=1; res[2][state[3]]:=1;
    return res,rinit; end if;
    if res[1] eq 1 and res[2][state[3]] eq 1 then res[1]:=1; res[2][state[3]]:=0;
    return res,rinit; end if;
    if res[1] eq 1 and res[2][state[3]] eq 0 then res[1]:=0; res[2][state[3]]:=1;
    return res,rinit; end if;
end function;


function iseq(state,init)
    if state[3] ne 0 then return false; end if;
    if state[1] ne init[1] then return false; end if;
    if #Keys(init[2]) ne #Keys(state[2]) then return false; end if;
    for k in Keys(init[2]) do
        if init[2][k] ne state[2][k] then return false; end if;
    end for;
    return true;
end function;

function apply_generator(gen,state,init)
    if gen eq 1 or gen eq -1 then return u(state,init); end if;
    if gen eq 2 then return v(state,init); end if;
    if gen eq -2 then return vv(state,init); end if;
end function;

function precise_trace(h,prec)
    tr:=0;
    badness:=0;
    state:=[*0,AssociativeArray(ZZ),0*];    
    bits:=[];
    pwrs:= ElementToSequence(h);
    for k in [0..2^(2*prec+2)-1] do
        if (k mod 2^(2*prec-2)) eq 0 then
            printf "prc progress = %o trace = %o badness = %o\n",
            RR! k/2^(2*prec+2), RR ! tr, RR ! badness;
            end if;
        bits:=IntegerToSequence(k,2);
        for i in [#bits+1..2*prec+2]  do Append(~bits,0); end for;
        state[1]:=bits[1];
        for l in [-prec..prec] do state[2][l]:=bits[prec+l+2]; end for;
        state[3]:=0;
        init:=[*state[1],state[2]*];
        for i in [1..#pwrs] do 
            state,init:=apply_generator(pwrs[i],state,init);
            if state[3] gt prec or state[3] lt -prec then
                badness:=badness+1/2^(2*prec+2);
                continue k;
            end if;
        end for;
        if iseq(state,init) then tr:=tr+1/2^(2*prec+2); end if;    
    end for;
    return tr,badness;
end function;

G<u,v>:=FPGroup<u,v | u^2 = v^3 = 1>;

a:=v*u;
b:=v^2*u;
comm:=[];
comm[1]:=(a^14*v*a^-14,v); //~0.53
print "prc",RR ! precise_trace(comm[1],7);

//Comment the remaining lines out if you want to run the code
//on free Magma online calculator http://magma.maths.usyd.edu.au/calc/

comm[2]:=(a^9*comm[1]*a^-9,comm[1]); //~0.64
print "prc",RR ! precise_trace(comm[2],7);
comm[3]:=(b^2*comm[2]*b^-2,comm[2]); //~0.69
print "prc",RR ! precise_trace(comm[3],7);
\end{verbatim}
\end{scriptsize}

\section*{Acknowledgments}

A manuscript written by the third-named author containing the ideas outlined in the introduction circulated in 2015. The first-named and the third-named author acknowledge funding by the Deutsche Forschungsgemeinschaft (SPP 2026). The fourth-named author was supported in part by NSF grant DMS 2246684.


\end{document}